\documentclass[12pt,a4paper]{amsart}
\usepackage[utf8]{inputenc}
\usepackage[T1]{fontenc}
\usepackage{amsmath,amsthm,amsfonts,amssymb,mathtools}
\usepackage{xcolor}
\usepackage[hypertexnames=false,colorlinks=true,linkcolor=teal,citecolor=blue]{hyperref}
\newcommand{\supp}{\operatorname{supp}}

\newcommand{\bbfR}{\mathbb{R}}
\newcommand{\vf}{\varphi}
\newcommand{\bbfN}{\mathbb{N}}
\newcommand{\ud}{\,\mathrm d}

\newtheorem{theorem}{Theorem}

\newtheorem{lemma}[theorem]{Lemma}

\theoremstyle{definition}

\theoremstyle{remark}
\newtheorem{remark}[theorem]{Remark}
\numberwithin{equation}{section}
\numberwithin{theorem}{section}
\mathtoolsset{showonlyrefs=true}

\author[X. Zheng]{Xiaoxin Zheng}
\address[X. Zheng]{\small 
School of Mathematics and Systems Science, Beihang University, Beijing 100191, 
People's Republic of China}
\email{xiaoxinzheng@buaa.edu.cn}

\author[D. Pilarczyk]{Dominika Pilarczyk}
\address[D. Pilarczyk]{\small Wydzia{\l} Matematyki, Politechnika Wroc{\l}awska,
Wybrze\.ze Wyspia\'nskiego 37, Wroc{\l}aw, Poland} 
\email{dominika.pilarczyk@pwr.edu.pl}

\author[G. Karch]{Grzegorz Karch}
\address[G. Karch]{\small 
 Instytut Matematyczny, Uniwersytet Wroc{\l}awski,
 pl. Grunwaldzki 2/4, \hbox{50-384} Wroc{\l}aw, Poland}
\email{Grzegorz.Karch@math.uni.wroc.pl}

\author[M. Cannone]{Marco Cannone}
\address[M. Cannone]{\small 
Laboratoire d'Analyse et de Mathématiques Appliquées,
Universit\'e Gustave Eiffel,
5 boulevard Descartes,
Bâtiment Copernic,
77420 Champs-sur-Marne, France.}
\email{marco.cannone@univ-eiffel.fr}

\title[Large self-similar solutions]{Large self-similar solutions \\ of  a convection--diffusion equation}

\begin{document}
\begin{abstract}
We construct self-similar solutions to a nonlinear convection--diffusion equation posed on the whole Euclidean space, for a class of homogeneous initial data. The corresponding self-similar profiles solve a nonlinear elliptic equation on $\mathbb{R}^N$. We obtain these profiles as limits of solutions to Dirichlet problems on bounded domains with truncated nonlinearities, using a priori estimates that are uniform with respect to both the domain size and the truncation level.

Our approach is inspired by recent constructions of large self-similar solutions for the Navier--Stokes system. The convection--diffusion equation considered here shares some structural features with the Navier--Stokes equations, but its scalar parabolic nature allows us to exploit tools such as the maximum principle and thereby obtain a more direct construction of self-similar solutions.
\end{abstract}

\keywords{Convection--diffusion equation; self-similar solution; elliptic equation; homogeneous initial data.}

\subjclass[2020]{Primary 35C06; Secondary 35K58, 35K60, 35J61, 76R99}

\date{}
\maketitle

\tableofcontents

\section{Introduction}
\subsection{Statement of the problem and a brief description of the main result}

Our goal is to construct self-similar solutions to the convection--diffusion equation
\begin{equation}\label{eq}
u_t - \Delta u + \partial_{x_1}\left(|u|^{q-1}u\right) = 0, \qquad x \in \mathbb{R}^N, \; t > 0,
\end{equation}
for $q > 1 + {1}/{N}$. Equation~\eqref{eq} is invariant under the scaling transformation
\begin{equation}\label{scaling:intro}
u_\lambda(x,t) \equiv \lambda^{\frac{1}{q-1}} u(\lambda x, \lambda^2 t) \qquad \text{for all } \lambda > 0.
\end{equation}
That is, if $u(x,t)$ solves \eqref{eq}, then so does $u_\lambda(x,t)$.
A solution to equation \eqref{eq} is called \textit{self-similar} if it remains invariant under this scaling:
\begin{equation}\label{u:ul}
u(x,t) = \lambda^{\frac{1}{q-1}} u(\lambda x, \lambda^2 t) \qquad \text{for all } x \in \mathbb{R}^N, \; t > 0, \text{ and } \lambda > 0.
\end{equation}
Setting $\lambda = 1/\sqrt{t}$ in \eqref{u:ul} shows that every self-similar solution takes the form
\begin{equation}\label{self_similar}
u(x,t) = t^{-\frac{1}{2(q-1)}} U\!\left(\frac{x}{\sqrt{t}}\right) \qquad \text{with } U(x) = u(x,1).
\end{equation}
If a self-similar solution \eqref{self_similar} admits an initial trace $u_0(x)$, this trace must be homogeneous of degree 
$-{1}/({q-1})$. Accordingly, we construct self-similar solutions \eqref{self_similar} to the Cauchy problem for equation \eqref{eq} with homogeneous initial data of the form
\begin{equation}\label{u0:intro}
u_0(x)=\frac{C(x)}{|x|^{\frac{1}{q-1}}}\quad 
\text{with}\quad  
C\in L^\infty (\bbfR^N)\;\; \text{such that} \;\; C(\lambda x)=C(x) \;\; \text{for all}\;\; \lambda>0.
\end{equation}
 For a nonzero angular part $C$, the function $u_0$ is locally integrable near the origin precisely when ${1}/({q-1}) < N$, which is equivalent to $q > 1 + {1}/{N}$.

Substituting expression \eqref{self_similar} into equation \eqref{eq} shows that the \textit{self-similar profile} $U(x) = u(x,1)$ satisfies the elliptic equation
\begin{equation}\label{ell_U_q:intro}
-\Delta U - \tfrac{1}{2} x \cdot \nabla U - \tfrac{1}{2(q-1)} U + \partial_{x_1}\left(|U|^{q-1} U\right) = 0, \qquad x \in \mathbb{R}^N.
\end{equation}
Our main result, Theorem~\ref{main}, establishes that equation \eqref{ell_U_q:intro} admits solutions $U(x)$ such that the rescaled function \eqref{self_similar} solves the Cauchy problem for \eqref{eq} for every homogeneous initial datum $u_0$ given by \eqref{u0:intro}. We refer to Section~\ref{section-main-results} for precise statements.

The methods developed in this paper extend naturally 
(as in other works; see, e.g.,~\cite{EZ91,EVZ93,EVZ94,EZ97})
to the more general equation
\begin{equation*}
u_t - \Delta u + \bar{a} \cdot \nabla\bigl(|u|^{q-1}u\bigr) = 0, \qquad x \in \mathbb{R}^N, \; t > 0,
\end{equation*}
where $\bar{a} \in \mathbb{R}^N \setminus \{0\}$ is a constant vector. 
Indeed, by rotating and rescaling the spatial variables, we may assume without loss of generality that $\bar{a} = (1, 0, \ldots, 0)$. 

\subsection{Related results}

In his influential monograph \cite{Barenblatt}, G.~I.~Barenblatt emphasizes the broad importance of scaling, or power-law, relations in science and engineering. Such relations should not be regarded merely as convenient empirical approximations of experimental data, since in many applications they reflect an underlying structural property of the governing phenomena. 
In particular, scaling laws often reveal self-similarity: after an appropriate rescaling of space, time, or other relevant variables, the phenomenon preserves the same essential form.
The monograph \cite{Barenblatt} presents numerous examples of scaling laws, self-similar solutions, and intermediate asymptotic behavior. A more recent treatment of self-similar solutions to partial differential equations can be found in the monograph 
by Giga, {\it et al.} \cite{GGS10}. We now discuss those results on scaling laws and self-similar solutions that are most closely related to the present paper.

Aguirre, Escobedo, and Zuazua \cite{AEZ90} constructed a family of integrable 
self-similar solutions invariant under the scaling \eqref{scaling:intro} to equation~\eqref{eq} in the critical case
\(
q=1+{1}/{N}.
\)
The special role of this exponent follows directly from
the profile equation. Let $U(x)=u(x,1)$ be a sufficiently regular
solution of the elliptic equation \eqref{ell_U_q:intro}. Assume that $U$ and its derivatives
decay fast enough at infinity to justify integration by parts.
Integrating the elliptic equation \eqref{ell_U_q:intro} over $\bbfR^N$, we obtain
\begin{equation*}
\frac{N}{2}\int_{\bbfR^N}U(x)\,\ud x
=
\frac{1}{2(q-1)}\int_{\bbfR^N}U(x)\,\ud x.
\end{equation*}
Thus, if $q\neq 1+1/N$, every integrable profile satisfying these
assumptions has zero mass. In particular, a nonnegative, nonzero,
integrable self-similar profile cannot exist outside the critical case.

When $q=1+1/N$, the nonnegative solutions constructed in the work \cite{AEZ90}
are indexed by their mass $M>0$ and have the initial trace
\(
u(\cdot,0)=M\delta_0,
\)
where $\delta_0$ is the Dirac measure at the origin. These solutions are
called source-type solutions and they resemble the Gaussian heat
kernel which solves the heat equation. They are also
important in the study of long-time behavior of other solutions to equation \eqref{eq}. 
Escobedo and Zuazua \cite{EZ91}
proved that the self-similar source-type
solution with mass $M$ gives the leading large-time profile of the
solution to equation \eqref{eq} with integrable initial data of the same mass.

The situation is different in the supercritical range $q>1+1/N$. As
explained in Remark~\ref{rem:Lp:self} below, a nontrivial profile cannot
belong to $L^p(\bbfR^N)$ when
\(
1<p\leq N(q-1).
\)
It is useful to begin the discussion of the construction of non-integrable self-similar profiles by recalling the analogous theory for the incompressible Navier--Stokes equations in $\bbfR^3$. 

This system of equations for the vector velocity $v$ and the scalar pressure $p$ has the form
\begin{equation}\label{NS:intro}
\partial_t v+(v\cdot\nabla)v-\Delta v+\nabla p=0,
\qquad
\nabla\cdot v=0,
\end{equation}
and is invariant under the scaling
\[
v_\lambda(x,t)=\lambda v(\lambda x,\lambda^2t),
\qquad
p_\lambda(x,t)=\lambda^2p(\lambda x,\lambda^2t).
\]
A forward self-similar solution satisfies $v_\lambda=v$ for every
$\lambda>0$ and therefore has the form
\[
v(x,t)=\frac{1}{\sqrt t}V\left(\frac{x}{\sqrt{t}}\right).
\]
Its initial velocity is homogeneous of degree $-1$ and it is not expected from  nonzero field
with this homogeneity to belong to $L^3(\bbfR^3)$.
 Such a field may nevertheless belong to a larger
scale-invariant space, such as weak $L^3$, a critical Morrey space, or
a homogeneous Besov space. 
Such self-similar solutions to the Navier--Stokes system \eqref{NS:intro} have been constructed by the following approach.
For small initial data, the Navier--Stokes equations can be solved
through the mild formulation
\begin{equation}\label{duhamel:intro}
v(t)
=
e^{t\Delta}v_0
-
\int_0^t
e^{(t-s)\Delta}\mathbb{P}\nabla\cdot(v\otimes v)(s)\,\ud s,
\end{equation}
where $\mathbb{P}$ is the Helmholtz--Leray projection. In a suitable
scale-invariant function space, the integral term defines a bounded
bilinear map. If the critical norm of $v_0$ is sufficiently small, a
contraction argument gives a unique global mild solution in the
corresponding solution class. If, in addition, $v_0$ is homogeneous of
degree $-1$, then $v_\lambda$ has the same initial data as $v$ and
belongs to the same uniqueness class. Uniqueness therefore implies
$v_\lambda=v$, and the global mild solution is self-similar. This
argument explains the two main features of the small-data theory:
smallness gives a contraction, and uniqueness transfers the scaling of
the initial data to the solution.

This was applied in several different critical spaces.
Giga and Miyakawa studied the vorticity equation with measure-valued
initial vorticity in Morrey spaces \cite{GM89}. Cannone and Planchon
constructed self-similar solutions in critical Besov-type spaces under
a smallness condition \cite{CP96}. 
The paper \cite{CK04} presents a particularly simple Fourier-transform approach to the construction of self-similar solutions to the Navier--Stokes system \eqref{NS:intro} for sufficiently small homogeneous initial data.
The survey \cite{C04} explains the harmonic-analysis
methods behind this perturbative theory and the construction of self-similar solutions. 
Brandolese later
derived precise far-field asymptotic formulas for small self-similar
profiles \cite{B09}.

A completely analogous approach can be employed to construct self-similar solutions to equation \eqref{eq} in the supercritical range $q>1+{1}/{N}$, under a suitable smallness assumption on the homogeneous initial data. 
The existence of such small self-similar solutions, together with related generalizations and their role in the analysis of the long-time asymptotic behavior of more general solutions, has been studied, for example, in \cite{K99,DZ99,HKNS06}.

A natural open problem was whether self-similar solutions to the Navier--Stokes system \eqref{NS:intro}, or to the convection--diffusion equation \eqref{eq}, could be constructed for homogeneous initial data without imposing a smallness condition. In this large-data regime, the perturbative approach described above is no longer directly applicable. In particular, the nonlinear map arising from the Duhamel representation \eqref{duhamel:intro} is not known to admit a fixed point for large initial data. Moreover, uniqueness is generally not available in the natural classes of weak solutions.

A major breakthrough was achieved by Jia and \v{S}ver\'{a}k \cite{JS14}, who constructed large forward self-similar solutions to the Navier--Stokes equations without imposing a smallness condition on the homogeneous initial data. 
Their approach combines local-in-space estimates for local-Leray solutions near the initial time, \textit{a priori} estimates for the associated elliptic profile problem, and a Leray--Schauder degree argument.
This large-data theory for the Navier--Stokes equations has subsequently been extended in several directions. Tsai \cite{T14} constructed large forward discretely self-similar solutions, and Bradshaw and Tsai \cite{BT17} further developed this theory. Chae and Wolf \cite{CW18} considered discretely self-similar initial data in $L^2_{\mathrm{loc}}(\bbfR^3)$. Korobkov and Tsai \cite{KorTsai16} introduced a simple energy-based argument to construct forward self-similar solutions corresponding to arbitrarily large self-similar data in the three-dimensional half-space. A related idea was later used in the works \cite{BMZ19,Lai24,hou2026forwardselfsimilarsolutions2d} in the study of large forward self-similar solutions to the fractional Navier--Stokes equations for a range of fractional diffusion exponents that includes the classical case. 
The recent works \cite{gui2026forwardselfsimilarsolutionstwodimensional,lai2026forwardselfsimilarsolutionsnavierstokes} develop alternative constructions of self-similar solutions to the Navier--Stokes system in spatial dimensions beyond the classical three-dimensional setting.
An analogous approach was employed in \cite{BK25,tsai2024} to construct large self-similar solutions for Boussinesq-type equations.


Self-similar solutions also play a central role in the theory of nonlinear diffusion equations. Since the literature on this subject is extensive, we mention below only a few representative results that are particularly relevant to the present work.

The Barenblatt solution is the classical
self-similar solution of the porous medium equation
\[
u_t-\Delta u^m=0.
\]
Its role in the asymptotic behavior of general solutions is discussed
in \cite{Barenblatt,V03}. For equations with absorption, Kamin and
Peletier studied the heat equation with absorption and the porous
medium equation with absorption \cite{KP85,KP86}. Self-similar and
asymptotically self-similar solutions of the semilinear heat equation
\[
u_t-\Delta u-u^p=0
\]
have been studied, for example, in \cite{CW98,N12,Q18}. 
The work \cite{K99} provides a general scaling framework for the construction of self-similar solutions to nonlinear evolution equations arising from sufficiently small homogeneous initial data. Related fixed-point and asymptotic methods for the scalar convection--diffusion equation \eqref{eq} on the half-line were developed in \cite{BK00,K00}. In those works, ordinary differential equation techniques were used to construct non-integrable self-similar profiles.

More recently, self-similar solutions with both regular and singular profiles have been constructed for inviscid fluid equations, including the Euler and Boussinesq systems; see, for example, \cite{Abe24,Buckmaster23,Abe26}.
The generalized fractal Burgers equation, namely equation \eqref{eq}, where the Laplacian is replaced by the fractional Laplacian $(-\Delta)^{\alpha/2}$ with $\alpha \in (0,2)$, was considered by Jakubowski and Serafin \cite{JakubowskiSerafin2023}. They proved the existence of a mild solution to the equation supplemented with a large class of initial conditions. In particular, for $u_0(x)=C|x|^{-\beta}$, $C>0$, $1<\beta<N$, they constructed a self-similar solution of the Cauchy problem. In this way, they generalized previous results obtained in \cite{BKW01}.



\subsection{Notation}
We denote the norm in $L^p(\bbfR^N)$ by $\|\cdot\|_p$ and the
norm in a Banach space $\mathcal X$ by $\|\cdot\|_{\mathcal X}$.
We use the standard Sobolev spaces $W^{k,p}$.
We write $U_0=e^\Delta u_0$ for the heat evolution at time $t=1$
given by 
\begin{equation*}
  U_0(x)=(e^\Delta u_0)(x)=\int_{\bbfR^N} G(x-y,1)u_0(y)\ud y.
\end{equation*}
Here $G(x,t)=(4\pi t)^{-N/2}\exp\bigl(-|x|^2/(4t)\bigr)$ is the Gaussian heat kernel.
The required estimates are collected in the Appendix.
Constants in the estimates below are denoted by the same letter $C$, even when they vary from line to line. When useful, we indicate their dependence on the parameters in the calculation.

\section{Main result}\label{section-main-results}

Our main result gives self-similar solutions of equation~\eqref{eq}.

\begin{theorem}\label{main}
Assume that $N\geqslant1$ and
\begin{equation}\label{q}
1+\frac1N<q
\quad\text{if }N\in\{1,2\},
\qquad
1+\frac1N<q<\frac{N}{N-2}
\quad\text{if }N\geqslant3.
\end{equation}
Let
\begin{equation}\label{u0}
u_0(x)
=
\frac{C(x)}{|x|^{\frac1{q-1}}},
\qquad
C\in L^\infty(\bbfR^N),
\qquad
C(\lambda x)=C(x)
\end{equation}
for every $\lambda>0$ and a.e. $x\in\bbfR^N$.
Then the Cauchy problem
\begin{equation}\label{eq:main}
\begin{aligned}
u_t-\Delta u+\partial_{x_1}\bigl(|u|^{q-1}u\bigr)&=0,
&&x\in\bbfR^N,\quad t>0,\\
u(x,0)&=u_0(x),
&&x\in\bbfR^N,
\end{aligned}
\end{equation}
has a self-similar solution
\begin{equation}\label{u:self:main}
u(x,t)
=
t^{-\frac1{2(q-1)}}
U\left(\frac{x}{\sqrt t}\right),
\end{equation}
where the profile $U=U(x)$ satisfies, in a weak sense,  the elliptic equation
\begin{equation}\label{ell_U_q}
-\Delta U-\frac12x\cdot\nabla U
-\frac{1}{2(q-1)}U
+\partial_{x_1}\bigl(|U|^{q-1}U\bigr)=0
\qquad\text{in }\bbfR^N.
\end{equation}
Moreover, writing
\(
U_0=e^\Delta u_0,
\) 
the following properties hold:
\begin{itemize}
\item
if
\(
q>1+\frac2N,
\)
then
\(
U-U_0\in W^{1,2}(\bbfR^N);
\)

\item
if
\(
1+\frac1N<q\leqslant1+\frac2N,
\)
then
\(
U-U_0\in W^{1,p}(\bbfR^N)
\)
for every
\[
\max\left\{
1,\frac{N(q-1)}q
\right\}
<p<N(q-1).
\]
\end{itemize}
In addition,
\(U\in L^p(\bbfR^N)\) for every
\(p\in\bigl(N(q-1),\infty\bigr]\),
and the initial condition is attained in the sense that
\begin{equation}\label{initial:trace:main}
u(\cdot,t)\longrightarrow u_0
\qquad\text{in }L^r_{\rm loc}(\bbfR^N)
\quad\text{as }t\to0
\end{equation}
for every
\(
1\leqslant r<N(q-1).
\)
\end{theorem}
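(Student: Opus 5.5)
We look for the profile in the form $U=U_0+W$, where $U_0=e^\Delta u_0$ already solves the linear part: $-\Delta U_0-\frac12x\cdot\nabla U_0-\frac1{2(q-1)}U_0=0$. This follows from the self-similarity of $e^{t\Delta}u_0$. Hence $W$ must satisfy
\begin{equation*}
-\Delta W-\tfrac12x\cdot\nabla W-\tfrac{1}{2(q-1)}W+\partial_{x_1}\bigl(|U_0+W|^{q-1}(U_0+W)\bigr)=0 .
\end{equation*}
We solve this first on balls $B_R$ with $W=0$ on $\partial B_R$, and with the nonlinearity $f(s)=|s|^{q-1}s$ replaced by a bounded Lipschitz truncation $f_k$ (equal to $f$ for $|s|\le k$, constant beyond). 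For fixed $R$ and $k$ the convection term $\partial_{x_1}f_k(U_0+W)$ is a divergence of a bounded function, so the Leray--Schauder theorem applies once we have an a priori bound. The map is $W\mapsto(-\Delta-\frac12x\cdot\nabla-\frac1{2(q-1)})^{-1}[-\partial_{x_1}f_k(\cdot)]$ on $L^2(B_R)$. It is compact because the elliptic solve gains one derivative; invertibility uses the Dirichlet condition and the energy identity below. Existence on $B_R$ then follows.

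The heart of the proof is a family of estimates uniform in $R$ and $k$.

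\textbf{$L^\infty$ bound.} The maximum principle (comparison) gives $\|U\|_\infty\le C(\|C\|_\infty)$, where $U=U_0+W$. The natural tool is that $U$ solves a scalar equation of the form $-\Delta U-\frac12x\cdot\nabla U-\frac1{2(q-1)}U+f_k'(U)\partial_{x_1}U=0$. Then $\pm\|C\|_\infty|x|^{-1/(q-1)}$-type barriers, or rather $\pm e^\Delta(\|C\|_\infty|x|^{-1/(q-1)})$, are sub/supersolutions in the sense relevant to the parabolic comparison. The sign of the zeroth-order term $-\frac1{2(q-1)}U$ is bad for an elliptic maximum principle. So I would instead compare in parabolic variables: $u(x,t)=t^{-1/(2(q-1))}U(x/\sqrt t)$ on expanding cones, where the convection term does not destroy the comparison with constants along characteristics. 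In practice one shows $|U|\le e^\Delta(\|C\|_\infty|\cdot|^{-1/(q-1)})$.

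\textbf{Energy bound.} We test the $W$-equation with $W$. The drift contributes $-\frac12\int x\cdot\nabla W\,W=\frac N4\int W^2$, so the linear part gives $\int|\nabla W|^2+(\frac N4-\frac1{2(q-1)})\int W^2$, which is coercive exactly when $q>1+2/N$. For the nonlinear term, write $f_k(U_0+W)=f_k(U_0+W)-f_k(U_0)+f_k(U_0)$. The piece $\int\partial_{x_1}F(W)$-like part (with $F'=f_k$) vanishes after integration by parts up to terms involving $\nabla U_0$. Those, together with $\int f_k(U_0)\partial_{x_1}W$, are controlled using the $L^\infty$ bound and the decay $|U_0|+|\nabla U_0|\lesssim(1+|x|)^{-1/(q-1)}$. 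The latter is square-integrable when $q>1+2/N$, which yields $W\in W^{1,2}$ uniformly.

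In the range $1+\frac1N<q\le1+\frac2N$, I would test instead with $|W|^{p-2}W$. The drift then gives coefficient $\frac Np-\frac1{q-1}>0$, i.e. $p<N(q-1)$. The source $f(U_0)\sim|x|^{-q/(q-1)}$ lies in $L^p$ at infinity iff $p>N(q-1)/q$, and this produces the stated range of $p$. The gradient bound follows from elliptic regularity. The upper restriction $q<N/(N-2)$ is presumably needed to make the local singularity of the nonlinear terms compatible with the test functions.

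\textbf{Passage to the limit.} We let $k\to\infty$ (the truncation becomes inactive once $k$ exceeds the $L^\infty$ bound) and $R\to\infty$, using weak compactness in $W^{1,2}$ or $W^{1,p}$ together with local strong convergence (Rellich). This gives a weak solution of \eqref{ell_U_q}.

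\textbf{Remaining properties.} The claim $U\in L^p$ for $p>N(q-1)$ follows from $U_0\in L^p\cap L^\infty$ in that range combined with $W\in L^2$ (resp. $L^{p}$) and $L^\infty$, by interpolation. For the initial trace, note that $u(\cdot,t)-e^{t\Delta}u_0=t^{-1/(2(q-1))}W(\cdot/\sqrt t)$. Its $L^r$ norm scales like $t^{N/(2r)-1/(2(q-1))}\|W\|_r\to0$ when $r<N(q-1)$, while $e^{t\Delta}u_0\to u_0$ in $L^r_{\rm loc}$.

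The main obstacle I expect is the uniform $L^\infty$ (maximum principle) bound, since the zeroth-order term has the wrong sign. I would first try the parabolic comparison with the self-similar heat solution of $\pm\|C\|_\infty|x|^{-1/(q-1)}$ on the truncated problem.
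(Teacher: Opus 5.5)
Your outline is broadly the paper's: truncate, solve on balls by Leray--Schauder, estimate uniformly in the ball and the truncation level, remove the truncation, then read off integrability and the initial trace by scaling. The last two steps are fine.

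The gap is the uniform $L^\infty$ bound, on which everything else in your plan rests. The barrier $\Psi=e^\Delta(\|C\|_\infty|\cdot|^{-1/(q-1)})$ solves the linear profile equation, but it is not a supersolution of the truncated nonlinear one. Inserting it leaves the term $f_k'(\Psi)\,\partial_{x_1}\Psi$, which is negative on $\{x_1>0\}$ because $\Psi$ is radially decreasing and $f_k'\geq0$. Equivalently, the substitution $U=\Psi Z$ removes the bad term $-\frac{1}{2(q-1)}U$ but creates the zeroth-order coefficient $f_k'(U)\,\partial_{x_1}\Psi/\Psi$. That coefficient has the wrong sign on the same half-space, so no maximum principle is available.

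This is not a technicality: $|U|\leq\Psi$ should not be expected. For $C\equiv\varepsilon>0$ small one has $\Psi=U_0$. The leading correction $U-U_0\approx-\varepsilon^q\int_0^1 e^{(1-s)\Delta}\partial_{x_1}\bigl(h(s)^q\bigr)\ud s$, with $h(s)=e^{s\Delta}|\cdot|^{-1/(q-1)}$ radially decreasing, is positive on $\{x_1>0\}$, because convection pushes mass in the $x_1$-direction. Comparing with constants in parabolic variables does not help either, since the self-similar solution blows up like $t^{-1/(2(q-1))}$ at the tip of the cone.

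The idea you are missing is a barrier that the convection cannot see. The paper uses $\Phi(x)=\bigl(e^{\Delta_{\bbfR^{N-1}}}|\cdot|^{-1/(q-1)}\bigr)(x')$, with $x=(x_1,x')$. It is positive, comparable to $(1+|x'|^{1/(q-1)})^{-1}$, and solves the linear profile equation on $\bbfR^N$. Since $\partial_{x_1}\Phi=0$, the convection term $\psi_A'(U)\partial_{x_1}\Phi$ vanishes identically. Then $Z=U/\Phi$ satisfies $-\Delta Z+b\cdot\nabla Z=0$ with no zeroth-order term. The maximum principle on $B_k$, together with $|U_0|\leq M_0\Phi$ on $\partial B_k$, gives $|U|\leq M_0\Phi$ independently of $k$ and $A$. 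This needs $|x'|^{-1/(q-1)}\in L^1_{\rm loc}(\bbfR^{N-1})$, i.e.\ $N\geq2$ and $q>1+\frac{1}{N-1}$.

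In the remaining cases the paper's logic runs opposite to yours. It first proves $W^{1,p}$ bounds uniform in $A$, using only $|\psi_A'(z)|\leq q|z|^{q-1}$; this is where $q<2$ is needed. Sobolev then gives $U\in L^{p^*}$ with $p^*>N(q-1)$. Finally it runs a Moser iteration on the identity $\frac{4(p-1)}{p^2}\|\nabla|U|^{p/2}\|_2^2=\bigl(\frac{1}{2(q-1)}-\frac{N}{2p}\bigr)\|U\|_p^p$, in which the nonlinearity has disappeared. The cases $N=1$ and $(N,q)=(2,2)$ need separate arguments.

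A smaller error: your integrability claim is backwards, since $(1+|x|)^{-1/(q-1)}\in L^2$ iff $q<1+\frac{2}{N}$. What the energy estimate actually uses is $|U_0|^q\in L^2$, i.e.\ $2q>N(q-1)$. That is exactly where $q<\frac{N}{N-2}$ enters, as a decay condition at infinity rather than a local singularity; $U_0$ is smooth and bounded.
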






To prove the theorem, we truncate the nonlinearity $|U|^{q-1}U$ at a cutoff level $A > 0$ and consider the elliptic equation satisfied by $V = U - e^{\Delta} u_0$ (see equation~\eqref{ell_V:gen} below). In Theorem~\ref{existence_I}, we prove the existence of $V$ by combining \textit{a priori} estimates with the Leray--Schauder theorem, approximating solutions on $\mathbb{R}^N$ by solutions to Dirichlet boundary value problems on bounded domains. 
Finally, in Subsection~\ref{subsection:truncation:independent}, we establish $L^\infty$ estimates for solutions 
either to equation \eqref{ell_U:gen} or to equation \eqref{ell_V:gen} that are uniform with respect to $A$. We then choose $A$ larger than this uniform bound; the truncation is inactive on the range of the solution, so the resulting profile satisfies the original equation with nonlinearity $|U|^{q-1}U$.

\begin{remark} \label{rem:Lp:self}
Here, we record the natural integrability threshold for the self-similar profiles $U = U(x)$. The profiles constructed in Theorem~\ref{main} exhibit a structural barrier to fast spatial decay. Indeed, let $U$ be a bounded, sufficiently smooth solution to the profile equation~\eqref{ell_U_q} such that $U \in L^p(\mathbb{R}^N)$ for some $p > 1$. Testing equation \eqref{ell_U_q} against $|U|^{p-2}U$ and integrating over $\mathbb{R}^N$  (in the same way as in Lemma~\ref{lem:large:p:U}, below) yields
\begin{equation}\label{est:Lp:0}
\frac{4(p-1)}{p^2} \left\| \nabla |U|^{\frac{p}{2}} \right\|_2^2 + \left( \frac{N}{2p} - \frac{1}{2(q-1)} \right) \|U\|_p^p = 0,
\end{equation}
where the convection term vanishes identically upon integration by parts. Consequently, if
$ 
p \le N(q-1),
$ 
both terms in \eqref{est:Lp:0} are non-negative, forcing $U \equiv 0$. Thus, any nontrivial profile $U$ satisfying \eqref{est:Lp:0} must obey
\begin{equation*}
U \notin L^p(\mathbb{R}^N) \qquad \text{for all } 1 < p \le N(q-1).
\end{equation*}
In particular, when $q \ge 1 + \frac{2}{N}$, it follows that $U \notin L^2(\mathbb{R}^N)$. 

This cutoff matches the linear heat-profile estimate in Lemma~\ref{heat_L}, which yields $e^{\Delta} u_0 \in L^p(\mathbb{R}^N)$ for every $p>N(q-1)$.
This threshold aligns with the expected far-field asymptotics of $U(x)$. Because the initial datum $u_0$ is homogeneous of degree $-1/(q-1)$, one naturally expects $U$ to share the leading-order decay rate at spatial infinity:
$
U(x) = \mathcal{O}\!\left(|x|^{-{1}/(q-1)}\right)
$
as $ |x| \to \infty$.
\end{remark}

\begin{remark}
Let us explain the restriction
\(
q<{N}/({N-2})
\)
for 
$N\geqslant3$ in Theorem \ref{main}.
In the energy argument, \(V=U-U_0\) belongs to
\(W^{1,2}(\bbfR^N)\). To use
\(\varphi\in W^{1,2}(\bbfR^N)\) as a test function, we need
\(
\psi(U)\in L^2(\bbfR^N).
\)
Since \(|\psi(s)|\leqslant |s|^q\), it is sufficient to have
\(U=V+U_0\in L^{2q}(\bbfR^N)\).
The Sobolev embedding
\(
W^{1,2}(\bbfR^N)
\hookrightarrow L^{\frac{2N}{N-2}}(\bbfR^N)
\)
gives \(V\in L^{2q}(\bbfR^N)\) when
\(q\leqslant N/(N-2)\). On the other hand, Lemma \ref{heat_L}
implies that \(U_0\in L^{2q}(\bbfR^N)\) provided
\(
2q>N(q-1),
\)
or equivalently,
\(
q<{N}/({N-2}).
\)
Thus, the strict upper bound on \(q\) ensures that the nonlinear
term
\(
\int_{\bbfR^N}
\psi(U)\,\partial_{x_1}\varphi\,\ud x
\)
is well defined for  \(\vf \in W^{1,2}(\bbfR^N)\).
\end{remark}


\section{Equation with truncated nonlinearity}

\subsection{Elliptic equation with a globally Lipschitz nonlinearity}

We first consider the elliptic equation
\begin{equation}\label{ell_U:gen}
-\Delta U - \tfrac{1}{2}x\cdot \nabla U - \frac{1}{2(q-1)} U+\partial_{x_1}\psi (U)=0, \qquad x\in \bbfR^N,
\end{equation}
with the truncated nonlinearity
\begin{align}\label{psi1}
\psi(V)=\begin{cases}
A^q, \quad &\text{if}\quad V>A,\\
|V|^{q-1}V, \quad & \text{if}\quad |V|\leqslant A,\\
-A^q, \quad &\text{if}\quad V<-A
\end{cases}
\end{align}
where $q>1$ and $A>0$. Then
\begin{equation}\label{psi2}
\psi \in W^{1,\infty}(\bbfR) \quad \text{and}\quad \psi'(V)=0\quad \text{if}\quad |V|> A
\end{equation}
and
\begin{equation}\label{psi_estimate}
\| \psi\|_{L^\infty(\bbfR)}\leqslant A^q \quad \text{and} \quad \| \psi'\|_{L^\infty (\bbfR)}\leqslant qA^{q-1}.
\end{equation}
Moreover, a direct calculation shows that there is a constant $C(q)>0$, independent of $A$, such that 
\begin{equation}\label{psi:ineq}
|\psi(a+b)-\psi(a)|
\leqslant C(q)|b|\bigl(|a|^{q-1}+|b|^{q-1}\bigr)
\qquad\text{for all }a,b\in\bbfR.
\end{equation}

For the homogeneous initial datum \eqref{u0}, we seek a solution of
\eqref{ell_U:gen} in the form
\[
U=V+U_0,
\qquad U_0:=e^\Delta u_0.
\]
Since $U_0=e^\Delta u_0$ satisfies the linear elliptic equation \eqref{eq:U0:A} below, the function $V=V(x)$ solves
\begin{equation}\label{ell_V:gen}
-\Delta V-\frac{x}{2}\cdot\nabla V-\frac{1}{2(q-1)}V
+\partial_{x_1}\psi(V+U_0)=0
\qquad\text{in }\bbfR^N.
\end{equation}

We call $V$ a weak solution of \eqref{ell_V:gen} if
\begin{equation}\label{vf_V:gen}
\begin{split}
\int_{\bbfR^N}\nabla V\cdot\nabla\vf\ud x
-\int_{\bbfR^N}\frac{x}{2}\cdot\nabla V\,\vf\ud x
-\frac{1}{2(q-1)}&
\int_{\bbfR^N}V\vf\ud x\\
&-
\int_{\bbfR^N}
\psi(V+e^\Delta u_0)\partial_{x_1}\vf\ud x=0
\end{split}
\end{equation}
for every $\vf\in C^\infty_c(\bbfR^N)$.

\begin{theorem}\label{existence_I}
Assume $N\geqslant1$ and
\[
1+\frac1N<q
\quad\text{if }N\in\{1,2\},
\qquad
1+\frac1N<q<\frac{N}{N-2}
\quad\text{if }N\geqslant3.
\]
Let $u_0$ be of the form \eqref{u0}, and let $\psi$ be given by
formula \eqref{psi1} with an arbitrary $A>0$. Then equation
\eqref{ell_V:gen} has a weak solution $V=V(x)$ with the following
properties:
\begin{itemize}
\item
if
\(
q>1+\frac2N,
\)
then
\(
V\in W^{1,2}(\bbfR^N);
\)
\item
if
\(
q\in\left(1+\frac1N,1+\frac2N\right],
\)
then
\[
V\in W^{1,p}(\bbfR^N)
\qquad\text{for every}\qquad
\max\left\{1,\frac{N(q-1)}{q}\right\}
<p<N(q-1).
\]
\end{itemize}
\end{theorem}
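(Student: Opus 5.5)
The plan is to solve \eqref{ell_V:gen} first on balls $B_R$ with homogeneous Dirichlet data, obtain estimates uniform in $R$, and then let $R\to\infty$.

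\textbf{Step 1: the problem on $B_R$.} For fixed $R$ and $A$, I will look for $V_R\in W^{1,2}_0(B_R)$ solving \eqref{ell_V:gen} in $B_R$. Define the map $T$ on $L^2(B_R)$ by letting $T(W)$ be the solution $V$ of the linear problem
\[
-\Delta V-\tfrac x2\cdot\nabla V-\tfrac1{2(q-1)}V=-\partial_{x_1}\psi(W+U_0)
\]
with zero boundary data. Here $\psi(W+U_0)\in L^\infty$, with a bound depending only on $A$. The zeroth order coefficient has the wrong sign, so solvability is not immediate. However, $-\int \frac x2\cdot\nabla V\,V=\frac N4\int V^2$, and hence the bilinear form has zeroth order coefficient $\frac N4-\frac1{2(q-1)}$. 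This coefficient is positive exactly when $q>1+2/N$, and then the form is coercive by Lax--Milgram. In the range $q\le 1+2/N$, I will add the Poincaré inequality on $B_R$, which gives coercivity for each fixed $R$, or alternatively use the Fredholm alternative together with a maximum-principle uniqueness argument. Compactness of $T$ follows from Rellich. The a priori bound for $V=\sigma T(V)$ with $\sigma\in[0,1]$ comes from testing with $V$: the convection term gives $|\int\psi(V+U_0)\partial_1V|\le A^q|B_R|^{1/2}\|\nabla V\|_2$. Leray--Schauder then produces $V_R$.

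\textbf{Step 2: uniform estimates when $q>1+2/N$.} Test with $V_R$ and use the identity $\psi(V+U_0)=[\psi(V+U_0)-\psi(U_0)]+\psi(U_0)$. The key observation is that $\int \Psi(V+U_0)$-type terms cancel. More precisely, let $\Phi'=\psi$. Then
\[
\int\psi(V+U_0)\partial_1 V=\int\partial_1\Phi(V+U_0)-\int\psi(V+U_0)\partial_1U_0 .
\]
The first integral vanishes by the boundary condition, provided $\Phi(U_0)$ is integrable with appropriate decay; I will handle this using a cutoff. The remaining term $\int[\psi(V+U_0)-\psi(U_0)]\partial_1U_0$ is bounded via \eqref{psi:ineq} by
\[
\int |V|\bigl(|U_0|^{q-1}+|V|^{q-1}\bigr)|\nabla U_0|,
\]
and this is the main obstacle. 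For large data this term is not small, so it cannot simply be absorbed into the coercive terms. Here I plan to use the approach from the Navier--Stokes theory, in the form of Leray's contradiction argument combined with the maximum principle. Suppose $\|V_R\|_{W^{1,2}}=\lambda_R\to\infty$ and normalize $\widetilde V_R=V_R/\lambda_R$. The truncation makes $\psi$ bounded, and so the nonlinear contribution divided by $\lambda_R^2$ tends to $0$. The limit $\widetilde V$ would then satisfy the linear homogeneous equation together with $\|\nabla\widetilde V\|_2^2+(\frac N4-\frac1{2(q-1)})\|\widetilde V\|_2^2=0$ from the energy identity, which gives $\widetilde V=0$. This contradicts the normalization once strong convergence is obtained from the energy equality. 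Because the truncation is at fixed $A$, even the direct estimate $|\int\psi\,\partial_1V|\le\|\psi(V+U_0)-\psi(U_0)\|_2\|\nabla V\|_2\le qA^{q-1}\|V\|_2\|\nabla V\|_2$ is linear. Since this factor can be large, I will use the contradiction argument, or split $U_0$ into a small $L^{N(q-1)+}$ tail and a bounded part and use uniform $L^\infty$ control of $V$ via the maximum principle.

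\textbf{Step 3: the range $q\le1+2/N$, and passage to the limit.} In this range $U_0\notin L^2$, so I will test instead with $|V|^{p-2}V$, following the idea of Lemma~\ref{lem:large:p:U}. The zeroth order coefficient becomes $\frac N{2p}-\frac1{2(q-1)}$, which is positive precisely when $p<N(q-1)$. Moreover $\psi(U_0)\in L^p$ requires $pq>N(q-1)$, which accounts for the stated range of $p$. Gradient bounds in $L^p$ then follow from elliptic $L^p$ regularity applied to the equation rewritten with divergence-form right-hand side $\psi(V+U_0)-\psi(U_0)$ plus $\psi(U_0)$. Finally, the bounds are uniform in $R$, so I extract a weakly convergent subsequence $V_R\rightharpoonup V$ in $W^{1,2}$ (respectively $W^{1,p}$) with $V_R\to V$ in $L^2_{\rm loc}$. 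Since $\psi$ is Lipschitz, $\psi(V_R+U_0)\to\psi(V+U_0)$ in $L^1_{\rm loc}$, and passing to the limit in \eqref{vf_V:gen} for compactly supported $\vf$ gives the weak solution.
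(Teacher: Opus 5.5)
Your architecture is the paper's: Dirichlet problems on balls, a fixed-point theorem, bounds uniform in $R$, then a weak limit. However, the $R$-uniform bounds, which are the real content of the theorem, are not obtained. The main gap is Step~3. When you test with $|V|^{p-2}V$, the convection term $\int\partial_{x_1}\psi(V+U_0)\,|V|^{p-2}V$ does not vanish. In Lemma~\ref{lem:large:p:U} it vanishes only because the test function is a function of the same $U$ that sits inside $\psi$. You give no way to control this term.

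The missing device is the primitive $P_\delta(v,u)=\int_0^v\psi(s+u)\Phi_\delta'(s)\,ds$, which moves the derivative onto $U_0$. The term becomes $-\int\partial_uP_\delta(V,U_0)\,\partial_{x_1}U_0$, with $|\partial_uP_\delta(v,u)|\le C(|v|^{p+q-2}+|u|^{q-1}|v|^{p-1})$. Both pieces are then absorbed into $\bigl(\frac N{2p}-\frac1{2(q-1)}\bigr)\|V\|_p^p$ by Young's inequality, using $\nabla U_0\in L^{p/(2-q)}$ and $|U_0|^{q-1}\nabla U_0\in L^p$. This is where $p>N(q-1)/q$ is actually used.

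That absorption requires $q<2$. The cases $N=1$, $q\in(2,3]$ and $N=2$, $q=2$ belong to Step~3's range but have $q\geq 2$. They need the truncation ($\psi_A'=0$ off $\{|V+U_0|\le A\}$) together with the decay of $\nabla U_0$, and you do not address them. Also, the gradient bound should come from the term $\frac{4(p-1)}{p^2}\|\nabla(|V|^{p/2-1}V)\|_2^2$ produced by the same test, via H\"older (Lemma~\ref{lemma_1}). Standard elliptic $L^p$ estimates on $B_R$ for an operator with drift $\frac x2\cdot\nabla$ have constants depending on $R$ and give nothing uniform.

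In Step~2 the obstacle is misidentified and the proposed cure is unjustified. For $q<2$ no smallness is needed: $\int|V||U_0|^{q-1}|\nabla U_0|$ and $\int|V|^q|\nabla U_0|$ are absorbed directly by Young, since $|U_0|^{q-1}\nabla U_0\in L^2$ and $\nabla U_0\in L^{2/(2-q)}$ under the hypotheses. For $q\geq 2$, the assertion that boundedness of $\psi$ forces $\lambda_R^{-2}\int_{B_R}\psi(V_R+U_0)\partial_{x_1}V_R\to0$ does not follow. Boundedness only gives the bound $A^q|B_R|^{1/2}/\lambda_R$, and $|B_R|\to\infty$ at a rate not controlled by $\lambda_R$, so ``strong convergence from the energy equality'' is circular.

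The contradiction argument can be rescued with three ingredients:
\begin{itemize}
\item the bound $|\psi(V+U_0)-\psi(U_0)|\le C|V||U_0|^{q-1}+C_A|V|^2$, valid for $q\geq2$;
\item the convergence $\widetilde V_R\to0$ in $L^2_{\rm loc}$;
\item the decay $\sup_{|x|>\rho}|\nabla U_0|\to0$.
\end{itemize}
This is essentially the inner/outer splitting that Lemma~\ref{C_psi} performs directly, without any contradiction argument.

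Finally, in Step~1 the Poincar\'e inequality does not give coercivity for large $R$ when $q<1+2/N$, since $\lambda_1(B_R)\sim R^{-2}$, and testing with $V$ then yields no a priori bound. Only your alternative works: Fredholm plus a maximum principle, e.g.\ via the positive solution $e^\Delta|x|^{-1/(q-1)}$ of the linear equation. After that, Schauder's theorem applies directly because $\psi$ is bounded.
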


The proof of Theorem \ref{existence_I} is divided into several steps, which are presented in the following subsections.

\subsection{Construction of solutions on bounded domains}
First, we construct a solution of equation \eqref{ell_V:gen} in a bounded domain $\Omega$ with smooth boundary, subject to the Dirichlet boundary condition. 
We treat the following ranges of the nonlinear exponent separately:
\begin{equation*}
q\geqslant 1+\frac{2}{N}\quad\text{(Lemma~\ref{C_omega:1})}
\qquad\text{and}\qquad
q\in\left(1+\frac{1}{N},1+\frac{2}{N}\right)
\quad\text{(Lemma~\ref{lemma_2})}.
\end{equation*}

\begin{lemma}\label{C_omega:1} Let $N\geq 1$.
Let $q\geqslant 1+\frac{2}{N}$, let $\Omega\subset\bbfR^N$ be a bounded
domain with $C^2$ boundary,
let $u_0$ be defined by \eqref{u0} and 
let $U_0=e^\Delta u_0$, and let
\(\psi\in W^{1,\infty}(\bbfR)\) be arbitrary.
Then the boundary value problem
\begin{align}\label{ell:bounded_4}
-\Delta V-\frac{x}{2}\cdot\nabla V-\frac{1}{2(q-1)}V
+\partial_{x_1}\psi(V+U_0)&=0,
&&x\in\Omega,\\
V&=0,
&&x\in\partial\Omega,
\end{align}
has a weak solution $V\in W^{1,2}_0(\Omega)$.
\end{lemma}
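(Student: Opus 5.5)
We apply the Leray--Schauder fixed point theorem (Schaefer's form) in $X=L^2(\Omega)$. Given $W\in L^2(\Omega)$ and $\lambda\in[0,1]$, let $V=T(W)\in W^{1,2}_0(\Omega)$ solve the linear problem
\[
-\Delta V-\frac{x}{2}\cdot\nabla V-\frac{1}{2(q-1)}V=-\partial_{x_1}\psi(W+U_0)\quad\text{in }\Omega,\qquad V=0 \text{ on }\partial\Omega .
\]
Solvability of this linear problem follows from the Lax--Milgram lemma. For $V\in W^{1,2}_0(\Omega)$ integration by parts gives $\int_\Omega (x\cdot\nabla V)V\ud x=-\frac N2\int_\Omega V^2\ud x$. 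The bilinear form $B(V,V)$ is therefore
\[
\|\nabla V\|_2^2+\Bigl(\frac N4-\frac1{2(q-1)}\Bigr)\|V\|_2^2 .
\]
The hypothesis $q\geqslant1+\frac2N$ means exactly $\frac1{2(q-1)}\leqslant\frac N4$, so the zeroth-order coefficient is nonnegative. Coercivity on $W^{1,2}_0(\Omega)$ then follows from the Poincaré inequality on the bounded domain $\Omega$.

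On the right-hand side, $U_0$ is bounded on $\Omega$ by Lemma~\ref{heat_L}, and $\psi\in W^{1,\infty}$ is in particular bounded and Lipschitz. Hence $\psi(W+U_0)\in L^\infty(\Omega)\subset L^2(\Omega)$, and the functional $\varphi\mapsto\int_\Omega\psi(W+U_0)\partial_{x_1}\varphi\ud x$ lies in $W^{-1,2}(\Omega)$ with norm at most $\|\psi\|_\infty|\Omega|^{1/2}$. So $T:L^2(\Omega)\to W^{1,2}_0(\Omega)$ is well defined and satisfies
\[
\|\nabla T(W)\|_2\leqslant C\|\psi\|_\infty|\Omega|^{1/2}.
\]

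Next we check continuity and compactness of $T$. If $W_n\to W$ in $L^2$, then $\psi(W_n+U_0)\to\psi(W+U_0)$ in $L^2$ because $\psi$ is Lipschitz. By linearity and coercivity, $T(W_n)\to T(W)$ in $W^{1,2}_0$. Composing with the compact Rellich embedding $W^{1,2}_0(\Omega)\hookrightarrow L^2(\Omega)$, which uses that $\Omega$ is bounded, shows that $T:L^2\to L^2$ is continuous and compact.

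For the Leray--Schauder a priori bound, suppose $V=\lambda T(V)$ with $\lambda\in[0,1]$. Testing with $V$ and using the same identity gives
\[
\|\nabla V\|_2^2\leqslant \lambda\|\psi\|_\infty|\Omega|^{1/2}\|\nabla V\|_2 ,
\]
so $\|V\|_{W^{1,2}_0}\leqslant C(\Omega)\|\psi\|_\infty$ uniformly in $\lambda$. The fixed point $V=T(V)$ is the desired weak solution. The bound here may depend on $|\Omega|$ and $\|\psi\|_\infty$; uniformity in $\Omega$ and in $A$ is handled later in the paper.

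I expect no serious obstacle. The one point needing care is the sign of the zeroth-order term, which is precisely where $q\geqslant1+\frac2N$ enters, including the borderline case where that coefficient vanishes and Poincaré is required. In the complementary range the coefficient is negative, which is presumably why Lemma~\ref{lemma_2} is treated separately.
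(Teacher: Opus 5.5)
Your proof is correct, but it takes a different route from the paper's.

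\textbf{How the paper proceeds.} It inverts only the Dirichlet Laplacian, setting $T(W)=(-\Delta_D)^{-1}\bigl(\tfrac{x}{2}\cdot\nabla W+cW-\partial_{x_1}\psi(W+U_0)\bigr)$ on $W^{1,2}_0(\Omega)$ with $c=\frac{1}{2(q-1)}$. Compactness of the linear part $W\mapsto(-\Delta_D)^{-1}(\tfrac{x}{2}\cdot\nabla W+cW)$ comes from $W^{2,2}$ elliptic regularity, which is where the $C^2$ boundary is used. The hypothesis $q\geqslant 1+2/N$ enters only in the a priori estimate for $V=\lambda T(V)$, where the term $\lambda\bigl(\frac N4-\frac{1}{2(q-1)}\bigr)\|V\|_2^2$ has a good sign.

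\textbf{How you proceed.} You spend the hypothesis up front: it makes the full operator $-\Delta-\frac{x}{2}\cdot\nabla-\frac{1}{2(q-1)}$ coercive on $W^{1,2}_0(\Omega)$. You then invert that operator by Lax--Milgram, freezing only the nonlinear term.

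\textbf{What your route buys.} Your $T$ maps all of $L^2(\Omega)$ into the fixed ball $\{\|\nabla V\|_2\leqslant|\Omega|^{1/2}\|\psi\|_\infty\}$ of $W^{1,2}_0(\Omega)$. So compactness follows from Rellich alone, with no $W^{2,2}$ regularity and no boundary smoothness. The Leray--Schauder bound is also automatic, so Schauder's theorem on a large ball of $L^2(\Omega)$ would already suffice.

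\textbf{What the paper's route buys.} Robustness. Because the sign condition is used only in the a priori estimate, the same fixed-point scheme carries over to Lemma~\ref{lemma_2}. There $q<1+2/N$ makes the zeroth-order coefficient negative, and the full linear operator is not coercive in general. For example, on the large balls $B_k$ used later, the Poincar\'e constant degenerates, so your Lax--Milgram step would not be available in that range.
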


\begin{proof}
Set
\[
c=\frac{1}{2(q-1)}.
\]
For \(W\in W^{1,2}_0(\Omega)\), define \(T(W)=Z\), where \(Z\in
W^{1,2}_0(\Omega)\) is the unique weak solution of
\[
-\Delta Z
=
\frac{x}{2}\cdot\nabla W+cW
-\partial_{x_1}\psi(W+U_0),
\qquad
Z|_{\partial\Omega}=0.
\]
Since \(\psi\in L^\infty(\bbfR)\), the right-hand side belongs to
\(W^{-1,2}(\Omega)\), and hence \(T\) is well defined by the
Lax--Milgram theorem.

The map
\(
T:W^{1,2}_0(\Omega)\longrightarrow W^{1,2}_0(\Omega)
\)
is continuous and compact. Indeed, continuity follows from the Lipschitz
continuity of \(\psi\) and the continuity of the inverse Dirichlet
Laplacian. To prove compactness, write \(T=T_1+T_2\), where
\[
T_1(W)
=
(-\Delta_D)^{-1}
\left(\frac{x}{2}\cdot\nabla W+cW\right),
\qquad
T_2(W)
=
(-\Delta_D)^{-1}
\left(-\partial_{x_1}\psi(W+U_0)\right).
\]
The map \(T_1\) is compact from \(W^{1,2}_0(\Omega)\) into itself by
elliptic \(W^{2,2}\)-regularity and the compact embedding
\(W^{2,2}(\Omega)\hookrightarrow W^{1,2}(\Omega)\). Moreover, if
\(\{W_n\}\) is bounded in \(W^{1,2}_0(\Omega)\), then, after passing to a
subsequence,
\(
W_n\to W
\)
in $L^2(\Omega)$.
Since \(\psi\) is globally Lipschitz,
\[
\psi(W_n+U_0)\to\psi(W+U_0)
\qquad\text{in }L^2(\Omega),
\]
and therefore \(T_2(W_n)\to T_2(W)\) in \(W^{1,2}_0(\Omega)\). Thus \(T\)
is compact.

It remains to obtain the Leray--Schauder {\it a priori} bound. Let
\(
V=\lambda T(V)
\)
for $\lambda \in [0,1]$.
Then
\[
-\Delta V
=
\lambda\left(
\frac{x}{2}\cdot\nabla V+cV
-\partial_{x_1}\psi(V+U_0)
\right).
\]
This equation holds in $\Omega$, together with the homogeneous Dirichlet
boundary condition.
Testing this equation with \(V\) and using the relation
\[
\frac12\int_\Omega (x\cdot\nabla V)V\,\ud x
=
-\frac N4\int_\Omega V^2\,\ud x,
\]
we obtain
\[
\|\nabla V\|_2^2
+
\lambda\left(
\frac N4-\frac{1}{2(q-1)}
\right)\|V\|_2^2
=
\lambda\int_\Omega
\psi(V+U_0)\partial_{x_1}V\,\ud x.
\]
Since \(q\geq 1+2/N\), the second term on the left-hand side is
nonnegative. Hence
\[
\|\nabla V\|_2^2
\leqslant
|\Omega|^{1/2}\|\psi\|_\infty\|\nabla V\|_2,
\]
and consequently
\(
\|\nabla V\|_2
\leqslant
|\Omega|^{1/2}\|\psi\|_\infty.
\)
By the Poincar\'e inequality, this gives a bound for \(V\) in
\(W^{1,2}_0(\Omega)\), uniform with respect to \(\lambda\in[0,1]\).
The Leray--Schauder theorem therefore yields a fixed point
\(V\in W^{1,2}_0(\Omega)\) of \(T\), which is a weak solution of
\eqref{ell:bounded_4}.
\end{proof}

For
$q\in (1+1/N,1+2/N)$,
we construct a solution of the boundary value problem \eqref{ell:bounded_4}
in $W^{1,p}_0(\Omega)$, where
\(
1<p<N(q-1)<2.
\)
We begin with the following elementary lemma.

\begin{lemma}\label{lemma_1}
Let $1<p<2$ and let $\Omega\subset\bbfR^N$ be an arbitrary domain.
Suppose that
\[
|V|^{\frac p2-1}V\in W^{1,2}_0(\Omega).
\]
Then $V\in W^{1,p}_0(\Omega)$ and
\begin{equation}\label{lemma_1:estimate}
\int_\Omega |\nabla V|^p\ud x
\leqslant
C(p)
\left(
\int_\Omega
\left|\nabla\big(|V|^{\frac p2-1}V\big)\right|^2\ud x
+
\int_\Omega |V|^p\ud x
\right).
\end{equation}
If, in addition, $\Omega$ is bounded, then
\begin{equation}\label{lemma_1:estimate:bounded}
\int_\Omega |\nabla V|^p\ud x
\leqslant
C(p,\Omega)
\int_\Omega
\left|\nabla\big(|V|^{\frac p2-1}V\big)\right|^2\ud x.
\end{equation}
\end{lemma}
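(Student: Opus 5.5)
Set $W=|V|^{\frac p2-1}V$, so that $V=|W|^{\frac2p-1}W$ and $|V|^p=W^2$. The plan is to prove the pointwise chain rule
\[
\nabla V=\tfrac{2}{p}|W|^{\frac2p-1}\nabla W=\tfrac2p|V|^{1-\frac p2}\nabla W
\]
first for smooth $W$ and then pass to general $W$. For $p<2$ the map $F(s)=|s|^{\frac2p-1}s$ is $C^1$ on $\bbfR$ with $F'(s)=\frac2p|s|^{\frac2p-1}$. However, $F'$ is not bounded, so the standard chain rule in $W^{1,2}$ does not apply directly; this is the step that needs care.

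The key estimate is obtained by applying Young's inequality with exponents $2/p$ and $2/(2-p)$:
\[
|\nabla V|^p=\Bigl(\tfrac2p\Bigr)^p|V|^{p(1-\frac p2)}|\nabla W|^p
\leqslant C(p)\bigl(|\nabla W|^2+|V|^{p}\bigr),
\]
since $\bigl(|V|^{p(2-p)/2}\bigr)^{2/(2-p)}=|V|^p$. Integrating gives \eqref{lemma_1:estimate}, and the right-hand side is finite because $\int|V|^p=\|W\|_2^2$. If $\Omega$ is bounded, the Poincar\'e inequality $\|W\|_2\leqslant C(\Omega)\|\nabla W\|_2$ for $W\in W^{1,2}_0(\Omega)$ absorbs the term $\int|V|^p=\|W\|_2^2$ into the gradient term, which yields \eqref{lemma_1:estimate:bounded}.

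To justify the chain rule and the membership $V\in W^{1,p}_0(\Omega)$, take $W_n\in C_c^\infty(\Omega)$ with $W_n\to W$ in $W^{1,2}$ and a.e., and set $V_n=F(W_n)$. Each $V_n$ is $C^1$ with compact support, so $V_n\in W^{1,p}_0(\Omega)$, and $\nabla V_n=F'(W_n)\nabla W_n$. Then $V_n\to V$ a.e., and $\|V_n-V\|_p\to0$ follows from Vitali's theorem, since $|V_n|^p=W_n^2$ converges in $L^1$. For the gradients, apply H\"older with exponents $2/p$ and $2/(2-p)$:
\[
\int|F'(W_n)\nabla W_n-F'(W)\nabla W|^p
\leqslant C\,\|W_n\|_2^{2-p}\,\|\nabla W_n-\nabla W\|_2^p+\int|F'(W_n)-F'(W)|^p|\nabla W|^p .
\]
The first term tends to zero. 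The integrand of the second term is bounded by $C(|W_n|^{2-p}+|W|^{2-p})|\nabla W|^p$, and this family is uniformly integrable by H\"older, because $|W_n|^2$ converges in $L^1$. Hence the second term also tends to zero by Vitali. Thus $V_n\to V$ in $W^{1,p}$, which gives $V\in W^{1,p}_0(\Omega)$ with the claimed gradient. The main obstacle is this uniform integrability step, and the H\"older splitting above handles it.
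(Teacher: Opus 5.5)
Your proof is correct and follows the paper's route: the substitution $W=|V|^{\frac p2-1}V$, the chain rule $\nabla V=\frac2p|W|^{\frac2p-1}\nabla W$, an inequality with exponents $2/p$ and $2/(2-p)$, and the Poincar\'e inequality for bounded $\Omega$. You apply Young pointwise, whereas the paper applies H\"older to the integrals to get $\|\nabla V\|_p^p\leqslant (2/p)^p\|\nabla W\|_2^p\|W\|_2^{2-p}$. Your approximation argument with $W_n\in C_c^\infty(\Omega)$ and Vitali's theorem supplies the justification of the chain rule and of $V\in W^{1,p}_0(\Omega)$, which the paper only asserts.
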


\begin{proof}
Set
\[
w=|V|^{\frac p2-1}V,
\quad 
\text{then}
\quad 
V=|w|^{\frac2p-1}w.
\]
For smooth $w$ we have
\[
|\nabla V|
=
\frac2p |w|^{\frac2p-1}|\nabla w|,
\]
and consequently, by H\"older's inequality,
\begin{align*}
\int_\Omega|\nabla V|^p\ud x
&=
\left(\frac2p\right)^p
\int_\Omega
|w|^{2-p}|\nabla w|^p\ud x
\\
&\leqslant
\left(\frac2p\right)^p
\left(\int_\Omega|\nabla w|^2\ud x\right)^{p/2}
\left(\int_\Omega|w|^2\ud x\right)^{(2-p)/2}
\\
&\leqslant
C(p)
\left(
\int_\Omega|\nabla w|^2\ud x
+
\int_\Omega|w|^2\ud x
\right).
\end{align*}
The general case follows by approximation in $W^{1,2}_0(\Omega)$, which we describe in detail in Lemma \ref{C_psi_1} below.
Since $|w|^2=|V|^p$, this proves inequality \eqref{lemma_1:estimate}.
If $\Omega$ is bounded, inequality \eqref{lemma_1:estimate:bounded} follows
from the Poincar\'e inequality applied to $w$.
\end{proof}

\begin{lemma}\label{lemma_2}
Let
\(
q\in\left(1+\frac1N,1+\frac2N\right]
\)
and let $\Omega\subset\bbfR^N$ be a bounded domain with $C^2$ boundary.
Let $\psi\in W^{1,\infty}(\bbfR)$ be given by formula \eqref{psi1}  
with some $A>0$. 
Let $u_0$ be defined by \eqref{u0} and set $U_0=e^\Delta u_0$.
For every
\(
1<p<N(q-1),
\)
the boundary value problem
\begin{equation}\label{ell:bounded}
\begin{aligned}
-\Delta V-\frac{x}{2}\cdot\nabla V
-\frac{1}{2(q-1)}V
&=
-\partial_{x_1}\psi (V+e^\Delta u_0),
&&x\in\Omega,\\
V&=0,
&&x\in\partial\Omega,
\end{aligned}
\end{equation}
has a weak solution $V\in W^{1,p}_0(\Omega)$.
\end{lemma}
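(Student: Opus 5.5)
We mimic the proof of Lemma~\ref{C_omega:1}: a Leray--Schauder fixed point argument, but with the \emph{a priori} estimate obtained by testing with $|V|^{p-2}V$ instead of $V$. The point is that for $q<1+2/N$ the coefficient $\frac N4-\frac1{2(q-1)}$ is negative, so the $L^2$ energy identity is useless. The $L^p$-version of the identity, however, produces the coefficient $\frac{N}{2p}-\frac{1}{2(q-1)}$, which is positive exactly when $p<N(q-1)$.

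\textbf{Setting up the map.} Since $\Omega$ is bounded and $\psi$ is bounded and Lipschitz, we may keep the same map $T$ on $W^{1,2}_0(\Omega)$, defined by
\[
-\Delta Z=\frac{x}{2}\cdot\nabla W+cW-\partial_{x_1}\psi(W+U_0),\qquad Z|_{\partial\Omega}=0.
\]
Continuity and compactness of $T$ are proved exactly as in Lemma~\ref{C_omega:1}, since that argument used only $\psi\in W^{1,\infty}$. A fixed point then lies in $W^{1,2}_0(\Omega)\subset W^{1,p}_0(\Omega)$ because $p<2$ and $\Omega$ is bounded. What remains is a bound on all solutions of $V=\lambda T(V)$, uniform in $\lambda\in[0,1]$.

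\textbf{The a priori estimate.} Test the equation
\[
-\Delta V=\lambda\Bigl(\frac x2\cdot\nabla V+cV-\partial_{x_1}\psi(V+U_0)\Bigr)
\]
with $|V|^{p-2}V$. Because $p<2$ this function is singular at $V=0$, so we actually test with the regularization $(V^2+\varepsilon)^{(p-2)/2}V$, or use the approximation alluded to in Lemma~\ref{lemma_1}, and let $\varepsilon\to0$. Formally we have
\[
\int(x\cdot\nabla V)|V|^{p-2}V\,\ud x=-\frac Np\|V\|_p^p,
\]
with no boundary term since $V=0$ on $\partial\Omega$. This gives
\[
\frac{4(p-1)}{p^2}\|\nabla|V|^{p/2}\|_2^2+\lambda\Bigl(\frac N{2p}-c\Bigr)\|V\|_p^p
=\lambda(p-1)\int_\Omega\psi(V+U_0)|V|^{p-2}\partial_{x_1}V\,\ud x .
\]
Both terms on the left are nonnegative. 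On the right, write $|V|^{p-2}\partial_{x_1}V=\frac2p|V|^{p/2-1}\partial_{x_1}|V|^{p/2}$ and apply Cauchy--Schwarz with $|\psi|\le A^q$:
\[
\text{RHS}\le C A^q\,\bigl\||V|^{p/2-1}\bigr\|_{2}\,\|\nabla |V|^{p/2}\|_2 .
\]
The factor $\||V|^{p/2-1}\|_{L^2(\Omega)}^2=\int_\Omega|V|^{p-2}$ may be infinite because $p-2<0$. This is the main obstacle. The remedy is to let the sign structure help: subtract $\psi(U_0)$, which is allowed because
\[
\int_\Omega\psi(U_0)\,\partial_{x_1}\Phi(V)\,\ud x
\]
with $\Phi'(s)=|s|^{p-2}$ can be integrated by parts onto $\partial_{x_1}\psi(U_0)$, a bounded smooth function. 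This contributes a term bounded by $C\int_\Omega|\Phi(V)|\le C\int_\Omega|V|^{p-1}\le C|\Omega|^{1/p}\|V\|_p^{p-1}$. For the difference we use the Lipschitz bound $|\psi(V+U_0)-\psi(U_0)|\le qA^{q-1}|V|$, which gives
\[
\Bigl|\int_\Omega\bigl(\psi(V+U_0)-\psi(U_0)\bigr)|V|^{p-2}\partial_{x_1}V\,\ud x\Bigr|\le CA^{q-1}\int_\Omega|V|^{p/2}\,|\nabla|V|^{p/2}|\,\ud x .
\]
This is only quadratic in $w=|V|^{p/2}$. To control it, improve the truncation estimate by interpolating: for large $|V|$ use $|\psi(V+U_0)-\psi(U_0)|\le 2A^q$, for small $|V|$ use the Lipschitz bound. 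The worst term then becomes $\int_\Omega|V|^{p/2-1}\min(|V|,1)\,|\nabla w|$. By Cauchy--Schwarz this is at most $\bigl(\int_\Omega \min(|V|^p,|V|^{p-2})\bigr)^{1/2}\|\nabla w\|_2\le |\Omega|^{1/2}\|\nabla w\|_2$, since $\min(|V|^p,|V|^{p-2})\le1$. Young's inequality absorbs it into the gradient term.

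\textbf{Conclusion.} We obtain
\[
\|\nabla w\|_2^2\le C(A,p,\Omega)\bigl(1+\|w\|_2^{2(p-1)/p}\bigr),
\]
and the Poincaré inequality $\|w\|_2\le C\|\nabla w\|_2$ turns this into a bound on $\|\nabla w\|_2$, since the exponent $(p-1)/p<1$. Lemma~\ref{lemma_1}, estimate \eqref{lemma_1:estimate:bounded}, then gives a $\lambda$-uniform bound for $\|V\|_{W^{1,p}_0(\Omega)}$.

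The Leray--Schauder theorem needs a bound in the space on which $T$ acts. The cleanest fix is to run Leray--Schauder directly on $W^{1,p}_0(\Omega)$: define $T$ via $W^{1,p}$ elliptic theory for the Dirichlet Laplacian; compactness comes from the $W^{1,p}$ regularity gain of $(-\Delta_D)^{-1}$ applied to $x\cdot\nabla W\in L^p$, and from $L^p$-convergence of $\psi(W_n+U_0)$. If that is inconvenient, we bootstrap the $W^{1,p}$ bound to a $W^{1,2}$ bound using the equation and the boundedness of $\psi$. I expect making the singular test function rigorous, together with the handling of the $\psi(U_0)$ term described above, to be the delicate step.
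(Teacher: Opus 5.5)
Your proof is correct. Its skeleton is the paper's: Leray--Schauder, an a priori bound from testing with a regularization of $|V|^{p-2}V$, the positive coefficient $\frac{N}{2p}-\frac{1}{2(q-1)}$ for $p<N(q-1)$, and Lemma~\ref{lemma_1} at the end. The difference is how you handle the convection term.

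\textbf{The paper's route.} The paper never moves the derivative off the nonlinearity. It writes $\partial_{x_1}\psi(V+U_0)=\psi'(V+U_0)(\partial_{x_1}V+\partial_{x_1}U_0)$. For the truncation \eqref{psi1}, $\psi'(V+U_0)$ vanishes unless $|V|\le A+\|U_0\|_\infty$, so the weight $|V|^{p-1}$ is bounded on its support. The right-hand side is then at most $C(\|\nabla V\|_{L^1(\Omega)}+1)\le C(E^{1/p}+1)$ by Lemma~\ref{lemma_1}, and the sublinear inequality $E\le CE^{1/p}+C$ closes the estimate.

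\textbf{Your route.} You integrate by parts onto the test function and add and subtract $\psi(U_0)$. That piece returns onto the bounded function $\partial_{x_1}\psi(U_0)$ and costs $C\|V\|_p^{p-1}$. You then neutralize the singular weight with $|\psi(V+U_0)-\psi(U_0)|\le C\min\{|V|,1\}$ and $\min\{|V|^p,|V|^{p-2}\}\le 1$, and close with Poincar\'e for $w$, since $2(p-1)/p<2$. The step you flagged as delicate is in fact harmless. With $\Phi_\delta(s)=s(s^2+\delta)^{(p-2)/2}$ one has $0\le\Phi_\delta'(s)\le|s|^{p-2}$ and $|\Phi_\delta(s)|\le|s|^{p-1}$, so your Cauchy--Schwarz bound holds at the regularized level, uniformly in $\delta$, before Fatou is applied.

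\textbf{What each buys.} The paper's argument is shorter but relies on $\psi'$ having compact support. Yours uses only $\|\psi\|_\infty$ and $\|\psi'\|_\infty$, so it proves the lemma for every $\psi\in W^{1,\infty}(\bbfR)$, matching the generality of Lemma~\ref{C_omega:1}. Working in $W^{1,2}_0(\Omega)$ also makes the regularized test function directly admissible, whereas the paper needs an elliptic bootstrap to justify testing in $W^{1,p}_0$. The price is that you must lift the $W^{1,p}$ bound to a $W^{1,2}$ bound. The $W^{2,p}$ estimate for the drift part and the $W^{1,r}$ estimate for the divergence-form term $\partial_{x_1}\psi(\cdot)$ do this in finitely many steps. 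Alternatively, you can run Leray--Schauder on $W^{1,p}_0(\Omega)$, as the paper does.
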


\begin{proof}
Set
\[
c=\frac{1}{2(q-1)}\quad \text{and} \qquad X=W^{1,p}_0(\Omega).
\]
Since \(q\leq 1+2/N\) and \(p<N(q-1)\), we have \(1<p<2\).
Moreover, by Lemma \ref{heat_L}, we have 
\(
U_0,\nabla U_0\in L^\infty(\bbfR^N).
\)

For \(W\in X\), let \(T(W)=Z\), where \(Z\in X\) solves
\[
-\Delta Z
=
\frac{x}{2}\cdot\nabla W+cW
-\partial_{x_1}\psi(W+U_0),
\qquad
Z|_{\partial\Omega}=0.
\]
Since \(\psi\) is bounded and globally Lipschitz, standard elliptic
regularity estimates show that \(T:X\to X\) is well defined and continuous.
Moreover, the proof of Lemma~\ref{C_omega:1} shows that \(T\) is compact.

We apply the Leray--Schauder theorem. Let
\(
V=\lambda T(V)
\)
for $\lambda \in [0,1]$.
Then
\begin{equation}\label{ell:bounded:homotopy:p}
-\Delta V
=
\lambda\left(
\frac{x}{2}\cdot\nabla V+cV
-\partial_{x_1}\psi(V+U_0)
\right).
\end{equation}
Standard elliptic bootstrap, using the chain rule in the nonlinear term and the boundedness of \(\psi'\), yields
enough regularity to justify the calculations below.
Using a standard regularization of \(|V|^{p-2}V\) as a test function in
equation
\eqref{ell:bounded:homotopy:p} 
and then passing to the limit
(see the proof of Lemma \ref{C_psi_1} below for more detailed reasoning), we obtain
\begin{equation}\label{p_est_correct}
\begin{split}
&\frac{4(p-1)}{p^2}
\int_\Omega
\left|
\nabla\bigl(|V|^{\frac p2-1}V\bigr)
\right|^2\ud x
+
\lambda\left(
\frac{N}{2p}-\frac{1}{2(q-1)}
\right)
\int_\Omega |V|^p\ud x
\\
&\qquad\leqslant
\lambda
\left|
\int_\Omega
\psi'(V+U_0)
\bigl(\partial_{x_1}V+\partial_{x_1}U_0\bigr)
|V|^{p-2}V\,\ud x
\right|.
\end{split}
\end{equation}
Here we used
\[
\frac12\int_\Omega
(x\cdot\nabla V)|V|^{p-2}V\,\ud x
=
-\frac{N}{2p}\int_\Omega |V|^p\,\ud x.
\]
Since \(p<N(q-1)\), the second term on the left-hand side of
\eqref{p_est_correct} is nonnegative.

By the definition \eqref{psi1} of the truncated nonlinearity,
\[
\psi'(V+U_0)=0
\qquad\text{a.e. on }\{|V+U_0|>A\}.
\]
Consequently, on the support of \(\psi'(V+U_0)\), we have
\(
|V|\leq A+\|U_0\|_\infty,
\)
and therefore
\[
\left|
\int_\Omega
\psi'(V+U_0)
\bigl(\partial_{x_1}V+\partial_{x_1}U_0\bigr)
|V|^{p-2}V\,\ud x
\right|
\leq
C\left(
\|\nabla V\|_{L^1(\Omega)}+1
\right),
\]
where   \(C=C(U_0,|\Omega|,A)>0\) is independent of \(\lambda\).

Set
\[
E=
\int_\Omega
\left|
\nabla\bigl(|V|^{\frac p2-1}V\bigr)
\right|^2\ud x.
\]
By Lemma \ref{lemma_1},
\[
\|\nabla V\|_p^p\leq C E,
\]
and hence
\[
\|\nabla V\|_1
\leq
C\|\nabla V\|_p
\leq C E^{1/p}.
\]
Dropping the nonnegative \(L^p\)-term in \eqref{p_est_correct}, we obtain
\[
E\leq C E^{1/p}+C.
\]
Since \(p>1\), the $\varepsilon$-Young inequality gives
\(
E\leq C,
\)
uniformly with respect to \(\lambda\in[0,1]\). Lemma \ref{lemma_1} and
the Poincar\'e inequality then yield
\(
\|V\|_{W^{1,p}_0(\Omega)}
\leq C,
\)
again independently of \(\lambda\).
Thus all the assumptions of the Leray--Schauder theorem are satisfied.
Hence \(T\) has a fixed point \(V\in W^{1,p}_0(\Omega)\), which is a weak
solution of \eqref{ell:bounded}.
\end{proof}

\subsection{Domain-independent estimates on expanding balls}

A weak solution of equation \eqref{ell_V:gen} in the whole space is obtained as a limit of solutions 
$\{V_k\}_{k\in \bbfN} $ to the Dirichlet boundary value problems for each $k\in\bbfN$:
\begin{equation}\label{Vk_equ}
\begin{aligned}
-\Delta V_k-\frac{x}{2}\cdot\nabla V_k
-\frac{1}{2(q-1)}V_k
+
\partial_{x_1}\psi(V_k+U_0)&=0,
&&x\in B_k,\\
V_k&=0,
&&x\in\partial B_k,
\end{aligned}
\end{equation}
where  \(
B_k=\{x\in\bbfR^N:\ |x|<k\}
\) is 
an increasing sequence of balls. 

In this subsection, we derive \(W^{1,p}\) estimates for these approximations that are uniform with respect to the balls $B_k$. We treat the following cases separately:
\[
q>1+\frac{2}{N}
\qquad\text{and}\qquad
q\in\left(1+\frac{1}{N},1+\frac{2}{N}\right].
\]

\begin{lemma}\label{C_psi}
Assume that
\[
q>1+\frac{2}{N}
\quad\text{if }N\in\{1,2\},
\qquad
1+\frac{2}{N}<q<\frac{N}{N-2}
\quad\text{if }N\geqslant3.
\]
Let $\psi=\psi_A$ be the truncated nonlinearity defined in
\eqref{psi1}, with an arbitrary truncation level $A>0$. Let
$u_0$ be given by \eqref{u0} and set
\(
U_0=e^\Delta u_0.
\)
Then, for every $k\in\mathbb N$, the boundary value problem
\eqref{Vk_equ} has a weak solution
\(
V_k\in W^{1,2}_0(B_k).
\)
Moreover, there exists a constant
\(
C=C(A,N,q,U_0)>0,
\)
independent of $k$ and of the chosen solution $V_k$, such that
\begin{equation}\label{Vk_psi}
\|\nabla V_k\|_{L^2(B_k)}^2
+
\|V_k\|_{L^2(B_k)}^2
\leqslant C.
\end{equation}

If, in addition,
$ 
1+\frac{2}{N}<q<2,
$ 
then the constant in inequality  \eqref{Vk_psi} can be chosen in the form
\(
C=C(N,q,U_0),
\)
and is therefore independent of both $k$ and the truncation level
$A$.
\end{lemma}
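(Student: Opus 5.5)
The existence part is immediate: Lemma~\ref{C_omega:1} with $\Omega=B_k$ gives a weak solution $V_k\in W^{1,2}_0(B_k)$ of \eqref{Vk_equ}. Everything else is the uniform bound \eqref{Vk_psi}. For that I would test \eqref{Vk_equ} with $V_k$, exactly as in the proof of Lemma~\ref{C_omega:1} with $\lambda=1$, to get
\[
\|\nabla V_k\|_2^2+\delta\|V_k\|_2^2=\int_{B_k}\psi(V_k+U_0)\,\partial_{x_1}V_k\ud x,
\qquad \delta=\frac N4-\frac1{2(q-1)}>0,
\]
where now $\delta>0$ because $q>1+2/N$ is strict. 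The bound $|\Omega|^{1/2}\|\psi\|_\infty$ used before is useless here, since it grows with $k$. The key observation is that the purely nonlinear part integrates to zero. Set $\Psi(s)=\int_0^s\psi$. Since $V_k\in W^{1,2}_0(B_k)$ and $\psi$ is bounded, we have $\int_{B_k}\psi(V_k)\partial_{x_1}V_k\ud x=\int_{B_k}\partial_{x_1}\Psi(V_k)\ud x=0$. I would justify this by approximating $V_k$ by $C^\infty_c$ functions, using the Lipschitz continuity of $\psi$. Hence the right-hand side equals $\int_{B_k}\bigl(\psi(V_k+U_0)-\psi(V_k)\bigr)\partial_{x_1}V_k\ud x$. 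This is a commutator-type term that involves $U_0$ and therefore has the decay of $U_0$.

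Next I would estimate this difference by \eqref{psi:ineq} with $a=V_k$, $b=U_0$:
\[
|\psi(V_k+U_0)-\psi(V_k)|\le C(q)\bigl(|U_0||V_k|^{q-1}+|U_0|^q\bigr).
\]
The term $\int|U_0|^q|\nabla V_k|$ is at most $\|U_0\|_{2q}^q\|\nabla V_k\|_2$. By Lemma~\ref{heat_L}, $U_0\in L^{2q}$ because $2q>N(q-1)$, which is exactly the condition $q<N/(N-2)$ when $N\ge3$. For the cross term $\int|U_0||V_k|^{q-1}|\nabla V_k|$ I split into two cases.

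\emph{Case $q<2$, bound independent of $A$.} Young's inequality with exponents $1/(q-1)$ and $1/(2-q)$ gives
\[
|U_0||V_k|^{q-1}\le \varepsilon|V_k|+C_\varepsilon|U_0|^{1/(2-q)}.
\]
The $L^2$ norm of the second term is finite provided $U_0\in L^{2/(2-q)}$. By Lemma~\ref{heat_L} this holds when $2/(2-q)>N(q-1)$, that is, $N(q-1)(2-q)<2$. I would check this in the admissible range: at $q=1+2/N$ the value is $2-4/N<2$; for $N\le5$ the maximum over $q$ is at most $N/4<2$; for $N\ge6$ one has $q<N/(N-2)\le 3/2$, where the function is increasing, and at the endpoint it equals $4N(N-4)/(N-2)^2<2$. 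Cauchy--Schwarz and absorption of $\varepsilon\|V_k\|_2\|\nabla V_k\|_2$ and $\|\nabla V_k\|_2$ into $\|\nabla V_k\|_2^2+\delta\|V_k\|_2^2$ then give \eqref{Vk_psi} with $C=C(N,q,U_0)$.

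\emph{Case $q\ge2$ (only $N\le3$), bound allowed to depend on $A$.} Here I would instead use $|\psi'(s)|\le q\min(|s|,A)^{q-1}$ and the mean value theorem to get
\[
|\psi(V_k+U_0)-\psi(V_k)|\le C|U_0|\bigl(\min(|V_k|,A)^{q-1}+|U_0|^{q-1}\bigr),
\]
together with $\min(|V_k|,A)^{q-1}\le A^{q-2}|V_k|$. Choose $R$ such that $|U_0|\le\varepsilon$ for $|x|>R$; this is possible since $U_0$ is continuous and $U_0\in L^p$ for large $p$ (Lemma~\ref{heat_L}). On $\{|x|>R\}$ the contribution is at most $A^{q-2}\varepsilon\|V_k\|_2\|\nabla V_k\|_2$, which is absorbed for $\varepsilon$ small depending on $A$. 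On $B_R$ I use $\min(|V_k|,A)\le A$, giving a bound $A^{q-1}\|U_0\|_{L^2(B_R)}\|\nabla V_k\|_2$. This yields \eqref{Vk_psi} with $C=C(A,N,q,U_0)$.

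I expect the main obstacle to be the justification of the cancellation $\int\psi(V_k)\partial_{x_1}V_k=0$ and, above all, the exponent bookkeeping that puts $U_0$ in the right $L^r$ spaces uniformly in $k$. The inequality $N(q-1)(2-q)<2$ is the delicate point, and it is where I would double-check the numerology.
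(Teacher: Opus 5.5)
Your proof is correct and follows the paper's argument essentially step by step. You use the same energy identity, the same cancellation $\int_{B_k}\psi_A(V_k)\,\partial_{x_1}V_k\,dx=0$, and inequality \eqref{psi:ineq} with Young's inequality for $1+2/N<q<2$. For $q\geqslant2$ you use the same near/far splitting; you localize via $|\psi_A'(s)|\leqslant q\min\{|s|,A\}^{q-1}$, where the paper uses the level set $\{|V_k|\leqslant A+\|U_0\|_\infty\}$.

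Two small repairs are needed. First, the endpoint value is $N\cdot\frac{2}{N-2}\cdot\frac{N-4}{N-2}=\frac{2N(N-4)}{(N-2)^2}$, not $\frac{4N(N-4)}{(N-2)^2}$; the latter equals $3$ at $N=6$, so the inequality as you wrote it is false. The correct value is $<2$ because $N(N-4)<(N-2)^2$. Alternatively, as the paper does, note $(q-1)(2-q)<\frac{2}{N-2}\cdot\frac{N-2}{N}=\frac2N$ directly. Second, the radius $R$ should be chosen using the pointwise decay bound in Lemma~\ref{heat_L}, because continuity together with $L^p$-integrability does not by itself force $U_0\to0$ at infinity.
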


\begin{proof}
The existence of $V_k$ follows from Lemma \ref{C_omega:1}.
It remains to prove the stated estimates.
Set
\[
\alpha
=
\frac{N}{4}-\frac{1}{2(q-1)}.
\]
The assumption $q>1+2/N$ implies that
$ 
\alpha>0.
$ 

Testing equation \eqref{Vk_equ} with $V_k$ and using
$V_k=0$ on $\partial B_k$, we obtain
\begin{align}
\|\nabla V_k\|_{L^2(B_k)}^2
+
\alpha\|V_k\|_{L^2(B_k)}^2
&=
\int_{B_k}
\psi_A(V_k+U_0)\,
\partial_{x_1}V_k\,dx.
\label{C_psi:energy}
\end{align}

Let
\(
\Psi_A(s)=\int_0^s\psi_A(z)\,dz.
\)
Since $\psi_A$ is globally Lipschitz, the Sobolev chain rule gives
\[
\partial_{x_1}\Psi_A(V_k)
=
\psi_A(V_k)\partial_{x_1}V_k
\qquad\text{a.e. in }B_k.
\]
Because $V_k=0$ on $\partial B_k$ and $\Psi_A(0)=0$, integration
by parts yields
\[
\int_{B_k}
\psi_A(V_k)\partial_{x_1}V_k\,dx
=
\int_{B_k}\partial_{x_1}\Psi_A(V_k)\,dx
=
0.
\]
Consequently,
\begin{align}
\left|
\int_{B_k}
\psi_A(V_k+U_0)\partial_{x_1}V_k\,dx
\right|
&=
\left|
\int_{B_k}
\bigl(
\psi_A(V_k+U_0)-\psi_A(V_k)
\bigr)
\partial_{x_1}V_k\,dx
\right|.
\label{C_psi:difference}
\end{align}

The truncations $\psi_A$ satisfy
\begin{equation}\label{C_psi:uniform-difference}
|\psi_A(a+b)-\psi_A(a)|
\leqslant
C(q)|b|
\bigl(
|a|^{q-1}+|b|^{q-1}
\bigr),
\end{equation}
where $C(q)$ is independent of $A$. Applying
\eqref{C_psi:uniform-difference} with $a=V_k$ and $b=U_0$ gives
\begin{align}
&\left|
\int_{B_k}
\bigl(
\psi_A(V_k+U_0)-\psi_A(V_k)
\bigr)
\partial_{x_1}V_k\,dx
\right|
\nonumber\\
&\qquad\qquad
\leqslant
C(q)\int_{B_k}
\left(
|U_0|^q
+
|U_0||V_k|^{q-1}
\right)
|\nabla V_k|\,dx.
\label{C_psi:basic-estimate}
\end{align}

We first consider the subquadratic case
$1+\frac{2}{N}<q<2$.
For every $\varepsilon>0$, the $\varepsilon$-Young inequality gives
\begin{align}
\int_{B_k}|U_0|^q|\nabla V_k|\,dx
&\leqslant
\varepsilon\|\nabla V_k\|_{L^2(B_k)}^2
+
C(\varepsilon)
\|U_0\|_{L^{2q}(\mathbb R^N)}^{2q}.
\label{C_psi:U0q}
\end{align}

For the second term, use the $\varepsilon$-Young inequality with the three
conjugate exponents
$
{2}/({2-q}),
$ 
$
{2}/({q-1}),
$ and 
$
2.
$
After inserting suitable $\varepsilon$-factors, we obtain
\begin{align}
|U_0||V_k|^{q-1}|\nabla V_k|
&\leqslant
C(\varepsilon,q)
|U_0|^{\frac{2}{2-q}}
+
\varepsilon |V_k|^2
+
\varepsilon|\nabla V_k|^2.
\end{align}
Therefore,
\begin{align}
\int_{B_k}
|U_0||V_k|^{q-1}|\nabla V_k|\,dx
&\leqslant
\varepsilon
\left(
\|V_k\|_{L^2(B_k)}^2
+
\|\nabla V_k\|_{L^2(B_k)}^2
\right)
\nonumber\\
&\quad
+
C(\varepsilon,q)
\|U_0\|_{
L^{\frac{2}{2-q}}(\mathbb R^N)
}^{\frac{2}{2-q}}.
\label{C_psi:mixed-subquadratic}
\end{align}

The two norms of $U_0$ appearing above are finite. Indeed, the
heat-profile estimate in Lemma \ref{heat_L} gives
\(
U_0\in L^r(\mathbb R^N)
\)
whenever
\(
r>N(q-1).
\)
First, the inequality
\(
2q>N(q-1)
\)
is equivalent, for $N\geqslant3$, to
\(
q<\frac{N}{N-2}.
\)
Second, to verify the inequality 
\(
\frac{2}{2-q}>N(q-1),
\)
observe that
\(
q-1<\frac{2}{N-2}
\)
by the upper restriction on $q$, while
\(
2-q<\frac{N-2}{N}
\)
because $q>1+2/N$. Hence
\[
(q-1)(2-q)
<
\frac{2}{N-2}\frac{N-2}{N}
=
\frac2N,
\]
which is equivalent to
\(
\frac{2}{2-q}>N(q-1).
\)

Combining \eqref{C_psi:energy},
\eqref{C_psi:difference}, \eqref{C_psi:basic-estimate},
\eqref{C_psi:U0q}, and
\eqref{C_psi:mixed-subquadratic}, we obtain
\begin{align}
\|\nabla V_k\|_2^2+\alpha\|V_k\|_2^2
&\leqslant
C(q)\varepsilon
\left(
\|\nabla V_k\|_2^2+\|V_k\|_2^2
\right)
\nonumber\\
&\quad
+
C(\varepsilon,q)
\left(
\|U_0\|_{2q}^{2q}
+
\|U_0\|_{\frac{2}{2-q}}^{\frac{2}{2-q}}
\right).
\label{C_psi:subquadratic-final}
\end{align}
Choosing $\varepsilon>0$ sufficiently small
and absorbing the first term on the right-hand side into the
left-hand side, we obtain
\[
\|\nabla V_k\|_{L^2(B_k)}^2
+
\|V_k\|_{L^2(B_k)}^2
\leqslant
C(N,q,U_0).
\]
Every constant used in this part is independent of $A$.
This proves the asserted truncation-independent estimate when
$1+2/N<q<2$.

It remains to discuss the case $q\geqslant2$. For this range the
argument gives an estimate for each fixed $A$, but not an estimate
uniform in $A$.
Set
\[
M=A+\|U_0\|_{L^\infty(\mathbb R^N)}
\]
and
\[
\Omega_{k,A}
=
\{x\in B_k:\ |V_k(x)|\leqslant M\}.
\]
If $x\in B_k\setminus\Omega_{k,A}$, then $V_k(x)$ and
$V_k(x)+U_0(x)$ have the same sign and both have absolute value
larger than $A$. Hence
\[
\psi_A(V_k+U_0)=\psi_A(V_k)
\qquad\text{on }B_k\setminus\Omega_{k,A}.
\]
Thus the integral in \eqref{C_psi:difference} is supported in
$\Omega_{k,A}$.

Fix $R>0$. On $\Omega_{k,A}\cap B_R$, the estimate
$\|\psi_A\|_\infty\leqslant A^q$ gives
\begin{align}
&\left|
\int_{\Omega_{k,A}\cap B_R}
\bigl(
\psi_A(V_k+U_0)-\psi_A(V_k)
\bigr)
\partial_{x_1}V_k\,dx
\right|
\nonumber\\
&\qquad
\leqslant
2A^q|B_R|^{1/2}
\|\nabla V_k\|_{L^2(B_k)}
\nonumber\\
&\qquad
\leqslant
\varepsilon\|\nabla V_k\|_{L^2(B_k)}^2
+
C(\varepsilon)A^{2q}|B_R|.
\label{C_psi:local-A}
\end{align}
This term already exhibits explicit dependence on $A$.

On $\Omega_{k,A}$ we have $|V_k|\leqslant M$, and hence, because
$q\geqslant2$, we obtain
\[
|V_k|^{q-1}
\leqslant
M^{q-2}|V_k|.
\]
It follows that
\begin{align}
&\int_{\Omega_{k,A}\cap B_R^c}
|U_0||V_k|^{q-1}|\nabla V_k|\,dx
\nonumber\\
&\qquad
\leqslant
\frac{M^{q-2}}{2}
\|U_0\|_{L^\infty(B_R^c)}
\left(
\|V_k\|_{L^2(B_k)}^2
+
\|\nabla V_k\|_{L^2(B_k)}^2
\right).
\label{C_psi:exterior-A}
\end{align}
Since
\(
\|U_0\|_{L^\infty(B_R^c)}
\to 0
\)
as $R\to \infty$,
we may choose $R$ sufficiently large that the coefficient in
\eqref{C_psi:exterior-A} can be absorbed into the left-hand side
of \eqref{C_psi:energy}. In general, this choice depends on
\[
M^{q-2}
=
\bigl(
A+\|U_0\|_\infty
\bigr)^{q-2},
\]
and therefore depends on $A$ when $q>2$.

The remaining exterior term is estimated by
\[
\int_{B_k}|U_0|^q|\nabla V_k|\,dx
\leqslant
\varepsilon\|\nabla V_k\|_2^2
+
C(\varepsilon)\|U_0\|_{2q}^{2q}.
\]
Choosing first $R$ and then $\varepsilon$ sufficiently small and
absorbing the energy terms, we obtain
\[
\|\nabla V_k\|_{L^2(B_k)}^2
+
\|V_k\|_{L^2(B_k)}^2
\leqslant
C(A,N,q,U_0).
\]
The constant is independent of $k$ and $V_k$, but the argument
does not make it independent of $A$. The dependence on $A$ occurs
explicitly in \eqref{C_psi:local-A} and, for $q>2$, also in the
choice of $R$ following inequality \eqref{C_psi:exterior-A}.
\end{proof}

\begin{lemma}\label{C_psi_1}
Let $N\geqslant1$ and assume that
\(
q\in\left(1+\frac1N,\,1+\frac2N\right].
\)
Let
\[
\max\left\{
1,\frac{N(q-1)}q
\right\}
<p<N(q-1),
\]
and let $\psi=\psi_A$ be the truncated nonlinearity defined by \eqref{psi1} with a truncation level $A>0$.
Let $u_0$ be defined by \eqref{u0}, set
\(
U_0=e^\Delta u_0,
\)
and suppose that
\(
V_k\in W^{1,p}_0(B_k)
\)
is a weak solution of
\begin{equation}\label{ell:bounded:uni:p:Cpsi1}
\begin{aligned}
-\Delta V_k-\frac{x}{2}\cdot\nabla V_k
-\frac{1}{2(q-1)}V_k
+\partial_{x_1}\psi_A(V_k+U_0)&=0
&&\text{in }B_k,\\
V_k&=0
&&\text{on }\partial B_k.
\end{aligned}
\end{equation}
Then
\begin{equation}\label{Cpsi1:estimate}
\|V_k\|_{W^{1,p}_0(B_k)}
\leqslant
C(A,N,q,p,U_0),
\end{equation}
where the constant is independent of $k$ and of the particular
solution $V_k$.

Moreover, if
\(
N\geqslant2
\)
and
\(
q<2,
\)
then the constant in \eqref{Cpsi1:estimate} can be chosen
independently of the truncation level $A$. 
\end{lemma}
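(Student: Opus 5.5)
The plan is to repeat the $L^p$-energy argument from the proof of Lemma~\ref{lemma_2}, now with $\lambda=1$ and with all constants tracked so that they do not depend on $|B_k|$. I will test \eqref{ell:bounded:uni:p:Cpsi1} with a regularization $(V_k^2+\delta)^{\frac{p-2}{2}}V_k$ of $|V_k|^{p-2}V_k$ and let $\delta\to0$. The regularity needed for this comes from the elliptic bootstrap already used in Lemma~\ref{lemma_2}. This should give
\[
\frac{4(p-1)}{p^2}E_k+\alpha_p\|V_k\|_{L^p(B_k)}^p\le \Bigl|\int_{B_k}\psi_A'(V_k+U_0)\,(\partial_{x_1}V_k+\partial_{x_1}U_0)\,|V_k|^{p-2}V_k\,\ud x\Bigr|,
\]
where $E_k=\|\nabla(|V_k|^{\frac p2-1}V_k)\|_{L^2(B_k)}^2$ and $\alpha_p=\frac N{2p}-\frac1{2(q-1)}>0$ because $p<N(q-1)$. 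Unlike in Lemma~\ref{lemma_2}, the $L^p$ term is now \emph{kept}, since it is the only source of control on $\|V_k\|_p$ that is uniform in $k$. At the end, Lemma~\ref{lemma_1} in the form \eqref{lemma_1:estimate}, with no Poincaré inequality, turns a bound on $E_k+\|V_k\|_p^p$ into \eqref{Cpsi1:estimate}.

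The key step is to remove the dangerous part of the right-hand side. Let $G(s)=\int_0^s\psi_A'(z)|z|^{p-2}z\,\ud z$. Then $\int_{B_k}\partial_{x_1}G(V_k)\,\ud x=0$ because $V_k=0$ on $\partial B_k$. So I will replace $\psi_A'(V_k+U_0)$ by $\psi_A'(V_k+U_0)-\psi_A'(V_k)$ in front of $\partial_{x_1}V_k$. This mirrors the step \eqref{C_psi:difference} in Lemma~\ref{C_psi}. Writing $w_k=|V_k|^{\frac p2-1}V_k$, we have $|V_k|^{p-1}|\nabla V_k|=\frac2p|w_k||\nabla w_k|$, so the integrand is no longer singular at $V_k=0$. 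Two terms then remain:
\[
I_1=\int_{B_k}|\psi_A'(V_k+U_0)-\psi_A'(V_k)|\,|w_k||\nabla w_k|\,\ud x,\qquad I_2=\int_{B_k}|\psi_A'(V_k+U_0)|\,|\nabla U_0|\,|V_k|^{p-1}\,\ud x .
\]

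For the $A$-dependent estimate I will copy the case $q\ge2$ of Lemma~\ref{C_psi}. Outside $\{|V_k|\le M\}$, with $M=A+\|U_0\|_\infty$, both $\psi_A'$ factors vanish, so everything is supported where $|V_k|\le M$. On $B_R$ I use $\|\psi_A'\|_\infty\le qA^{q-1}$ together with Young's inequality; this produces $\varepsilon E_k+C(A,R)$. On $B_R^c$ I bound $|\psi_A'(V_k+U_0)-\psi_A'(V_k)|$ by $C(A)|U_0|$, using the Lipschitz-type behaviour of $\psi_A'$ away from its jump points, or simply by $C(A)$ times the indicator of where it is nonzero. Since $\|U_0\|_{L^\infty(B_R^c)}+\|\nabla U_0\|_{L^\infty(B_R^c)}\to0$, this part is small relative to $E_k+\|V_k\|_p^p$ once $R$ is large, and it can be absorbed.

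For the $A$-independent case ($N\ge2$, $q<2$) I will use instead the $A$-uniform bound $|\psi_A'(a+b)-\psi_A'(a)|\le C(q)|b|\bigl(|a|^{q-2}+|b|^{q-2}\bigr)$ and $|\psi_A'(s)|\le q|s|^{q-1}$. Splitting with Young's inequality then gives, for example,
\[
|U_0||V_k|^{q-2}|w_k||\nabla w_k|\le\varepsilon|\nabla w_k|^2+C|U_0|^2|V_k|^{2q-4+p}\le\varepsilon|\nabla w_k|^2+\varepsilon|V_k|^p+C|U_0|^{\frac p{2-q}}.
\]
The terms in $I_2$ reduce similarly to $\varepsilon|V_k|^p$ plus integrals of $|U_0|^{a}|\nabla U_0|^{b}$.

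The main obstacle is checking that every such leftover integral is finite by Lemma~\ref{heat_L}. That lemma gives $U_0\in L^r$ for $r>N(q-1)$ and, I expect, $\nabla U_0\in L^r$ for $r>N(q-1)/q$; this is exactly where the lower bound $p>\max\{1,N(q-1)/q\}$ and the hypotheses $N\ge2$, $q<2$ should enter. Where the exponent $2q-4+p$ or $q-2$ is negative, I will split into the regions $|V_k|\le|U_0|$ and $|V_k|>|U_0|$, so that the negative powers always fall on the larger quantity.
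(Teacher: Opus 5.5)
\textbf{There is a genuine gap, in your key step.} The outer frame matches the paper. You test with a regularization of $|V_k|^{p-2}V_k$, keep the term $\bigl(\frac N{2p}-\frac1{2(q-1)}\bigr)\|V_k\|_p^p$, and close with \eqref{lemma_1:estimate} instead of Poincar\'e.

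After subtracting $\int_{B_k}\partial_{x_1}G(V_k)\,\ud x=0$, the term $I_1$ still contains $\nabla V_k$, multiplied by $\psi_A'(V_k+U_0)-\psi_A'(V_k)$. Your argument needs this factor to be small wherever $U_0$ is small, and it is not, because $\psi_A'$ is discontinuous. It equals $q|s|^{q-1}$ for $|s|<A$ and $0$ for $|s|>A$, so it jumps by $qA^{q-1}$ at $s=\pm A$.

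\begin{itemize}
\item The bound $|\psi_A'(a+b)-\psi_A'(a)|\leqslant C(q)|b|\bigl(|a|^{q-2}+|b|^{q-2}\bigr)$ you invoke is false. For $a=A-\eta$, $b=2\eta$, the left-hand side tends to $qA^{q-1}$ as $\eta\downarrow0$, while the right-hand side tends to $0$.
\item On the layer where $V_k$ and $V_k+U_0$ lie on opposite sides of $\pm A$, the integrand of $I_1$ has size $A^{q-1}|V_k|^{p-1}|\nabla V_k|$, however small $U_0$ is. This breaks the $A$-independent case.
\item It also breaks your $A$-dependent argument on $B_R^c$. The Lipschitz bound $C(A)|U_0|$ holds only off this layer. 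On the layer, or with the bound ``$C(A)$ times an indicator'', you only get $C(A)\bigl(E_k+\|V_k\|_p^p\bigr)$, with a constant that does not shrink as $R\to\infty$, so nothing can be absorbed.
\item Switching to the continuous difference $\psi_A(V_k+U_0)-\psi_A(V_k)$, which satisfies \eqref{psi:ineq}, does not help either. The test function then contributes $\Phi'(V_k)\approx|V_k|^{p-2}$, and the resulting term $|U_0|^q|V_k|^{p-2}|\nabla V_k|$ is singular at $V_k=0$ because $p<2$.
\end{itemize}

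\textbf{The missing idea is to move the $x_1$-derivative entirely onto $U_0$,} so that no $\nabla V_k$ remains on the right-hand side. Write the convection term as $\int_{B_k}\psi_A(V_k+U_0)\Phi_\delta'(V_k)\partial_{x_1}V_k\,\ud x$ and introduce the two-variable primitive $P_\delta(v,u)=\int_0^v\psi_A(s+u)\Phi_\delta'(s)\,\ud s$.

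\begin{itemize}
\item Since $\psi_A$ itself is continuous, $P_\delta$ is $C^1$, with $P_\delta(0,u)=0$ and $\partial_uP_\delta(v,u)=\int_0^v\psi_A'(s+u)\Phi_\delta'(s)\,\ud s$.
\item The boundary condition turns the term into $-\int_{B_k}\partial_uP_\delta(V_k,U_0)\,\partial_{x_1}U_0\,\ud x$. Now $\psi_A'$ appears only under an integral in $s$, so its jump is harmless.
\item The bound $|\psi_A'(z)|\leqslant q|z|^{q-1}$ gives $|\partial_uP_\delta(v,u)|\leqslant C(p,q)\bigl(|v|^{p+q-2}+|u|^{q-1}|v|^{p-1}\bigr)$, uniformly in $A$ and $\delta$. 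This bound is not singular at $v=0$.
\item For $q<2$, Young's inequality closes the estimate using $\nabla U_0\in L^{p/(2-q)}$ and $|U_0|^{q-1}\nabla U_0\in L^p$. Both follow from $p/(2-q)>pq>N(q-1)$ and Lemma~\ref{heat_L}.
\item For $N=1$, $\partial_uP_\delta$ is bounded by $C(A)$ and $U_0'\in L^1(\bbfR)$. For $N=2$, $q=2$, split into $B_R$ and $B_R^c$.
\end{itemize}

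A smaller point: the lower bound $p>N(q-1)/q$ enters only through $pq>N(q-1)$. The integrability $\nabla U_0\in L^r$ for $r>N(q-1)/q$ that you expect is not given by Lemma~\ref{heat_L} when the angular part $C$ is merely bounded; that improved gradient decay requires $u_0\in C^1$ away from the origin.
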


\begin{proof}
For simplicity, write
\[
V=V_k
\qquad\text{and}\qquad 
c=\frac{1}{2(q-1)}.
\]
All constants below are independent of $k$.

In the following reasoning, we present a regularization procedure which allows us to obtain $W^{1,p}$-estimates for $p\in (1,2)$ in this proof as well as in other parts of this paper.
For $\delta>0$, define
\[
\Phi_\delta(s)
=
s(s^2+\delta)^{\frac{p-2}{2}},
\qquad
Q_\delta(s)
=
\int_0^s\Phi_\delta(\tau)\,\ud\tau.
\]
Then $\Phi_\delta$ is increasing and
\begin{equation}\label{Cpsi1:Phi-properties}
0\leqslant\Phi_\delta'(s)
\leqslant C(p)|s|^{p-2},
\qquad s\neq0.
\end{equation}
Moreover,
\[
\Phi_\delta(s)\longrightarrow |s|^{p-2}s,
\qquad
Q_\delta(s)\longrightarrow \frac1p|s|^p
\]
as $\delta\downarrow0$.
Testing the boundary value problem \eqref{ell:bounded:uni:p:Cpsi1} with
$\Phi_\delta(V)$ gives
\begin{align}
&\int_{B_k}\Phi_\delta'(V)|\nabla V|^2\,dx
+\frac N2\int_{B_k}Q_\delta(V)\,dx
-c\int_{B_k}V\Phi_\delta(V)\,dx
\nonumber\\
&\qquad
=
\int_{B_k}
\psi_A(V+U_0)\Phi_\delta'(V)\,
\partial_{x_1}V\,dx.
\label{Cpsi1:regularized-energy}
\end{align}

Introduce
\[
P_\delta(v,u)
=
\int_0^v
\psi_A(s+u)\Phi_\delta'(s)\,ds.
\]
Then
\[
\partial_vP_\delta(v,u)
=
\psi_A(v+u)\Phi_\delta'(v).
\]
Consequently,
\[
\partial_{x_1}P_\delta(V,U_0)
=
\psi_A(V+U_0)\Phi_\delta'(V)\partial_{x_1}V
+
\partial_uP_\delta(V,U_0)\partial_{x_1}U_0.
\]
Because \(V=0\) on \(\partial B_k\) and
\(P_\delta(0,u)=0\), integration over \(B_k\) gives
\begin{equation}\label{Cpsi1:P-integration}
\int_{B_k}
\psi_A(V+U_0)\Phi_\delta'(V)\partial_{x_1}V\,dx
=
-\int_{B_k}
\partial_uP_\delta(V,U_0)\partial_{x_1}U_0\,dx.
\end{equation}
Combining \eqref{Cpsi1:regularized-energy} and
\eqref{Cpsi1:P-integration}, we obtain
\begin{align}
&\int_{B_k}\Phi_\delta'(V)|\nabla V|^2\,dx
+\frac N2\int_{B_k}Q_\delta(V)\,dx
-c\int_{B_k}V\Phi_\delta(V)\,dx
\nonumber\\
&\qquad
=
-\int_{B_k}
\partial_uP_\delta(V,U_0)\partial_{x_1}U_0\,dx.
\label{Cpsi1:regularized-energy:P}
\end{align}

 We next estimate the last integral using
the truncation-independent pointwise estimate
$ 
|\psi_A'(z)|
\leqslant q|z|^{q-1}.
$ 
Therefore, by \eqref{Cpsi1:Phi-properties},
\begin{align}
|\partial_uP_\delta(v,u)|
&=
\left|
\int_0^v
\psi_A'(s+u)\Phi_\delta'(s)\,ds
\right|
\nonumber\\
&\leqslant
C(p,q)
\int_0^{|v|}
(s+|u|)^{q-1}s^{p-2}\,ds
\nonumber\\
&\leqslant
C(p,q)
\left(
|v|^{p+q-2}
+
|u|^{q-1}|v|^{p-1}
\right).
\label{Cpsi1:Pu-uniform}
\end{align}
The constant in \eqref{Cpsi1:Pu-uniform} is independent of
both \(A\) and \(\delta\).

We distinguish the relevant cases.

\medskip
\noindent
\textit{Case 1: \(N\geqslant2\) and \(q<2\).}
Using inequality \eqref{Cpsi1:Pu-uniform}, we obtain
\begin{align}
\left|
\int_{B_k}
\partial_uP_\delta(V,U_0)\partial_{x_1}U_0\,dx
\right|
&\leqslant
C
\int_{B_k}
|\nabla U_0||V|^{p+q-2}\,dx
\nonumber\\
&\quad
+
C
\int_{B_k}
|U_0|^{q-1}|\nabla U_0||V|^{p-1}\,dx.
\label{Cpsi1:RHS-two-terms}
\end{align}
The Young inequality gives, for every \(\varepsilon>0\),
\begin{align}
\int_{B_k}
|\nabla U_0||V|^{p+q-2}\,dx
&\leqslant
\varepsilon\|V\|_{L^p(B_k)}^p
+
C_\varepsilon
\|\nabla U_0\|_
 {L^{p/(2-q)}(\mathbb R^N)}^{p/(2-q)},
\label{Cpsi1:first-young}\\
\int_{B_k}
|U_0|^{q-1}|\nabla U_0||V|^{p-1}\,dx
&\leqslant
\varepsilon\|V\|_{L^p(B_k)}^p
+
C_\varepsilon
\bigl\|
 |U_0|^{q-1}\nabla U_0
\bigr\|_{L^p(\mathbb R^N)}^p.
\label{Cpsi1:second-young}
\end{align}
The assumptions on \(p\) imply
\[
\frac{p}{2-q}>N(q-1)
\qquad\text{and}\qquad
pq>N(q-1).
\]
Indeed,
\[
p>\frac{N(q-1)}q,
\qquad
q>1
\quad\Longrightarrow\quad
2-q<\frac1q.
\]
Thus, Lemma \ref{heat_L}
implies 
\[
\nabla U_0\in
L^{p/(2-q)}(\mathbb R^N)
\quad \text{and}\quad 
U_0,\nabla U_0\in L^{pq}(\mathbb R^N).
\]
In particular, the H\"older inequality yields
\[
\bigl\|
 |U_0|^{q-1}\nabla U_0
\bigr\|_{L^p}
\leqslant
\|U_0\|_{L^{pq}}^{q-1}
\|\nabla U_0\|_{L^{pq}}.
\]
It follows from \eqref{Cpsi1:RHS-two-terms}--%
\eqref{Cpsi1:second-young} that
\begin{equation}\label{Cpsi1:RHS-uniform}
\left|
\int_{B_k}
\partial_uP_\delta(V,U_0)\partial_{x_1}U_0\,dx
\right|
\leqslant
2\varepsilon\|V\|_{L^p(B_k)}^p
+
C_\varepsilon(N,q,p,U_0),
\end{equation}
where the constant is independent of \(A\), \(k\), and
\(\delta\).

Letting \(\delta\downarrow0\) in
\eqref{Cpsi1:regularized-energy:P}, using Fatou's lemma for the
gradient term and the standard convergence of the regularized
powers, gives
\begin{align}
\frac{4(p-1)}{p^2}
\left\|
 \nabla\bigl(|V|^{\frac p2-1}V\bigr)
\right\|_{L^2(B_k)}^2
+
\left(
\frac{N}{2p}-\frac{1}{2(q-1)}
\right)
\|V\|_{L^p(B_k)}^p
\nonumber\\
\leqslant
2\varepsilon\|V\|_{L^p(B_k)}^p
+
C_\varepsilon(N,q,p,U_0).
\label{Cpsi1:limit-energy}
\end{align}
Since \(p<N(q-1)\),
\[
\gamma_p
:=
\frac{N}{2p}-\frac{1}{2(q-1)}
>0.
\]
Choosing \(\varepsilon<\gamma_p/4\) and absorbing the
\(L^p\)-term into the left-hand side, we obtain
\begin{equation}\label{Cpsi1:weighted-estimate}
\left\|
 \nabla\bigl(|V|^{\frac p2-1}V\bigr)
\right\|_{L^2(B_k)}^2
+
\|V\|_{L^p(B_k)}^p
\leqslant
C(N,q,p,U_0).
\end{equation}
The constant in \eqref{Cpsi1:weighted-estimate} is independent of
\(A\) and \(k\).

Applying the \(W^{1,p}\)-estimate from Lemma~\ref{lemma_1}, we obtain
\[
\|V\|_{W^{1,p}_0(B_k)}^p
\leqslant
C(p)
\left[
\left\|
 \nabla\bigl(|V|^{\frac p2-1}V\bigr)
\right\|_{L^2(B_k)}^2
+
\|V\|_{L^p(B_k)}^p
\right].
\]
Thus, we conclude that
\[
\|V_k\|_{W^{1,p}_0(B_k)}
\leqslant
C(N,q,p,U_0),
\]
with a constant independent of both \(k\) and \(A\).

\medskip
\noindent
\textit{Case 2: \(N=1\).}
In this case the assumed range of \(q\) is
\(
2<q\leqslant3.
\)
Denote
\(
B=\|U_0\|_{L^\infty(\mathbb R)}.
\)
Because \(\psi_A'(s+u)=0\) whenever \(|s+u|>A\), the integrand
defining \(\partial_uP_\delta(v,u)\) vanishes unless
\(
|s|\leqslant A+B.
\)
Thus
\begin{align}
|\partial_uP_\delta(v,u)|
&\leqslant
C(p)qA^{q-1}
\int_{|s|\leqslant A+B}|s|^{p-2}\,ds
\nonumber\\
&\leqslant
C(p,q)A^{q-1}(A+B)^{p-1}.
\label{Cpsi1:N1-local}
\end{align}
By Lemma \ref{heat_L}, $U_0'\in L^1(\mathbb R)$ and consequently 
\[
\left|
\int_{B_k}
\partial_uP_\delta(V,U_0)U_0'\,dx
\right|
\leqslant
C(A,p,q,U_0)\|U_0'\|_{L^1(\mathbb R)}.
\]
Proceeding as above and using
\[
\frac{1}{2p}-\frac{1}{2(q-1)}>0,
\]
we obtain
\[
\|V_k\|_{W^{1,p}_0(B_k)}
\leqslant
C(A,p,q,U_0).
\]
This bound is independent of \(k\), but the estimate
\eqref{Cpsi1:N1-local} depends on \(A\). 

\medskip
\noindent
\textit{Case 3: \(N=2\) and \(q=2\).}
Here inequality \eqref{Cpsi1:Pu-uniform} becomes
\[
|\partial_uP_\delta(v,u)|
\leqslant
C(p)
\left(
|v|^p+|u||v|^{p-1}
\right).
\]
On the complement of a sufficiently large ball \(B_R\), the
decay of \(\nabla U_0\) allows the term
\[
C\int_{B_k\setminus B_R}
|\nabla U_0||V|^p\,dx
\]
to be absorbed into the left-hand side of the energy estimate.
The remaining term on \(B_k\setminus B_R\) is estimated by
\[
\varepsilon\|V\|_{L^p(B_k)}^p
+
C_\varepsilon
\|U_0\nabla U_0\|_{L^p(\mathbb R^2)}^p.
\]

On the fixed ball \(B_R\), however, one uses the local estimate
\[
|\partial_uP_\delta(v,u)|
\leqslant
C(A,p,\|U_0\|_\infty)|v|^{p-1}.
\]
Consequently,
\[
\int_{B_R}
|\partial_uP_\delta(V,U_0)||\nabla U_0|\,dx
\leqslant
\varepsilon\|V\|_{L^p(B_R)}^p
+
C_{\varepsilon,R}(A,p,U_0).
\]
The rest of the argument gives
\[
\|V_k\|_{W^{1,p}_0(B_k)}
\leqslant
C(A,p,U_0),
\]
independently of \(k\). 
\end{proof}

\subsection{Passage to the limit and solutions in the whole space}
We now prove the existence of a whole-space solution of
\eqref{ell_V:gen} with the truncated nonlinearity \eqref{psi1}.

\begin{proof}[Proof of Theorem \ref{existence_I}]
Recall that 
\(
U_0=e^\Delta u_0.
\)
For each $k\in\bbfN$, let $V_k$ be a solution of the Dirichlet
problem
\begin{equation}\label{Vk_equ:correct}
\begin{aligned}
-\Delta V_k-\frac{x}{2}\cdot\nabla V_k
-\frac{1}{2(q-1)}V_k
&=
-\partial_{x_1}\psi(V_k+U_0),
&&x\in B_k,\\
V_k&=0,
&&x\in\partial B_k.
\end{aligned}
\end{equation}
We extend each $V_k$ by zero outside $B_k$. Since
$V_k\in W^{1,p}_0(B_k)$, its zero extension, still denoted by
$V_k$, belongs to $W^{1,p}(\bbfR^N)$.

We consider separately the two ranges of $q$.

\medskip
\noindent
\textit{Case 1: $q>1+\frac2N$.}
By Lemma \ref{C_psi}, problem \eqref{Vk_equ:correct} has a
solution
\(
V_k\in W^{1,2}_0(B_k).
\)
satisfying 
 the estimate
\(
\|V_k\|_{W^{1,2}(\bbfR^N)}
\leqslant C,
\)
where $C$ is independent of $k$. Thus, after passing to a subsequence, there exists
\(
V\in W^{1,2}(\bbfR^N)
\)
such that
\begin{equation}\label{Vk:weak:W12}
V_k\rightharpoonup V
\qquad\text{weakly in }W^{1,2}(\bbfR^N).
\end{equation}
Furthermore, by the Rellich--Kondrachov theorem, after passing to
a further subsequence if necessary,
\begin{equation}\label{Vk:local:strong}
V_k\longrightarrow V
\qquad\text{strongly in }L^2_{\rm loc}(\bbfR^N)
\end{equation}
and
\begin{equation}\label{Vk:ae}
V_k(x)\longrightarrow V(x)
\qquad\text{for a.e. }x\in\bbfR^N.
\end{equation}

\medskip
\noindent
\textit{Case 2:
$q\in\left(1+\frac1N,1+\frac2N\right]$.}
Set
\[
p_-=
\max\left\{1,\frac{N(q-1)}{q}\right\},
\qquad
p_+=N(q-1).
\]
Thus
\(
1\leqslant p_-<p_+\leqslant2.
\)
Fix one exponent
\(
p_0\in(p_-,p_+)
\)
and, for each $k$, choose a solution
\(
V_k\in W^{1,p_0}_0(B_k)
\)
of problem \eqref{Vk_equ:correct} provided by Lemma \ref{lemma_2}.


Since $B_k$ is bounded, standard elliptic bootstrap applied to equation
\eqref{Vk_equ:correct} shows that each such $V_k$ belongs, in
particular, to
\(
W^{1,p}_0(B_k)
\)
for every finite $p>1$. Thus Lemma \ref{C_psi_1} can be applied
to the same solution $V_k$ for every
\(
p\in(p_-,p_+).
\)
Consequently,
\begin{equation}\label{Vk:all:p:bound}
\|V_k\|_{W^{1,p}(\bbfR^N)}
\leqslant C_p
\qquad
\text{for every }p\in(p_-,p_+),
\end{equation}
where $C_p$ is independent of $k$.

Let $\{p_j\}_{j=1}^\infty$ be a countable dense subset of
$(p_-,p_+)$. By reflexivity and a diagonal argument, there exist
a subsequence, still denoted by $\{V_k\}$, and functions
$V^{(j)}\in W^{1,p_j}(\bbfR^N)$ such that
\[
V_k\rightharpoonup V^{(j)}
\qquad\text{weakly in }W^{1,p_j}(\bbfR^N)
\]
for every $j$.
The
Rellich--Kondrachov theorem yields, after another diagonal
extraction,
\[
V_k\longrightarrow V
\qquad\text{strongly in }L^1_{\rm loc}(\bbfR^N)
\]
and
\[
V_k(x)\longrightarrow V(x)
\qquad\text{for a.e. }x\in\bbfR^N.
\]
Hence all the weak limits $V^{(j)}$ coincide with the same
function $V$. Therefore
\[
V\in W^{1,p_j}(\bbfR^N)
\qquad\text{for every }j.
\]

We claim that, in fact,
\begin{equation}\label{V:all:p}
V\in W^{1,p}(\bbfR^N)
\qquad\text{for every }p\in(p_-,p_+).
\end{equation}
Indeed, fix such a $p$. Choose $p_i,p_j$ from the dense sequence
so that
\(
p_i<p<p_j.
\)
Since
\(
V\in W^{1,p_i}(\bbfR^N)\cap W^{1,p_j}(\bbfR^N),
\)
the interpolation inequality applied both to $V$ and to
$\nabla V$ gives
\(
V\in W^{1,p}(\bbfR^N).
\)

It remains to show that the limit $V$ is a weak solution of
\eqref{ell_V:gen}. The argument is the same in both cases.

Fix
\(
\vf\in C^\infty_c(\bbfR^N).
\)
For all sufficiently large $k$,
\(
\supp\vf\subset B_k.
\)
The weak formulation of problem \eqref{Vk_equ:correct} therefore gives
\begin{equation}\label{Vk:weak:form}
\begin{split}
&
\int_{\bbfR^N}\nabla V_k\cdot\nabla\vf\ud x
-
\int_{\bbfR^N}
\frac{x}{2}\cdot\nabla V_k\,\vf\ud x
-
\frac{1}{2(q-1)}
\int_{\bbfR^N}V_k\vf\ud x
\\
&\qquad
=
\int_{\bbfR^N}
\psi(V_k+U_0)\partial_{x_1}\vf\ud x.
\end{split}
\end{equation}

Choose an exponent $p>1$ for which the sequence $\{V_k\}$ is
bounded in $W^{1,p}(\bbfR^N)$; in Case 1 we take $p=2$, while in
Case 2 we may take any $p\in(p_-,p_+)$. The weak convergence in
$W^{1,p}(\bbfR^N)$ implies
\[
\int_{\bbfR^N}\nabla V_k\cdot\nabla\vf\ud x
\longrightarrow
\int_{\bbfR^N}\nabla V\cdot\nabla\vf\ud x,
\]
and, since $x\vf$ has compact support,
\[
\int_{\bbfR^N}
\frac{x}{2}\cdot\nabla V_k\,\vf\ud x
\longrightarrow
\int_{\bbfR^N}
\frac{x}{2}\cdot\nabla V\,\vf\ud x.
\]
Similarly,
\[
\int_{\bbfR^N}V_k\vf\ud x
\longrightarrow
\int_{\bbfR^N}V\vf\ud x.
\]

On the other hand, by the almost everywhere convergence
\eqref{Vk:ae}, or its counterpart in Case 2, and the continuity of
$\psi$,
\[
\psi(V_k+U_0)
\longrightarrow
\psi(V+U_0)
\qquad\text{a.e. in }\bbfR^N.
\]
Moreover,
\[
|\psi(V_k+U_0)\partial_{x_1}\vf|
\leqslant
\|\psi\|_{L^\infty(\bbfR)}
|\partial_{x_1}\vf|,
\]
and the right-hand side is integrable because $\vf$ has compact
support. Hence, by the Lebesgue dominated convergence theorem,
\[
\int_{\bbfR^N}
\psi(V_k+U_0)\partial_{x_1}\vf\ud x
\longrightarrow
\int_{\bbfR^N}
\psi(V+U_0)\partial_{x_1}\vf\ud x.
\]

Passing to the limit in \eqref{Vk:weak:form}, we conclude that
$V$ is a weak solution of equation \eqref{ell_V:gen}, namely, it satisfies relation
\eqref{vf_V:gen}.
\end{proof}

\section{Proof of the main result}

\subsection{Truncation-independent estimates} \label{subsection:truncation:independent}
In the following theorem, we show that solutions to the elliptic equation \eqref{ell_V:gen} with the truncated nonlinearity \eqref{psi1} are uniformly bounded by a constant independent of the truncation level $A > 0$.

\begin{theorem}\label{thm:uniform:estimates}
Assume that $N\geqslant1$ and
\[
1+\frac1N<q
\quad\text{if }N\in\{1,2\},
\qquad
1+\frac1N<q<\frac{N}{N-2}
\quad\text{if }N\geqslant3.
\]
Let $\psi=\psi_A$ be the truncated nonlinearity defined by
\eqref{psi1}, with an arbitrary truncation level $A>0$. Let
\(
U_0=e^\Delta u_0,
\)
where the homogeneous function $u_0$ is given by \eqref{u0}. Let
$V$ be the whole-space weak solution of \eqref{ell_V:gen}
constructed in Theorem \ref{existence_I}. Then
\(
V\in L^\infty(\bbfR^N)
\)
and
\begin{equation}\label{V:Linfty}
\|V\|_{L^\infty(\bbfR^N)}
\leqslant C(N,q,U_0),
\end{equation}
where the constant is independent of the truncation level $A$.
\end{theorem}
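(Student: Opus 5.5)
The plan is a De Giorgi--Stampacchia level-set iteration for $V$ on the whole space. Every constant will be kept independent of $A$ by using only two facts about the truncation. The first is the $A$-uniform inequality \eqref{psi:ineq}. The second is that $\psi_A(V+U_0)-\psi_A(V)$ is the only part of the convection term that survives integration against a function of $V$.

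\emph{Step 1: energy inequality on level sets.} Fix a level $k\geqslant0$ and set $w_k=(V-k)_+$. I would test \eqref{ell_V:gen} with $w_k$. This is legitimate by a cutoff-and-limit argument, since $V\in W^{1,p}$ and $U_0\in L^r$ for $r>N(q-1)$ by Lemma \ref{heat_L}. The pieces are as follows.
\begin{itemize}
\item The drift term gives $\frac N4\|w_k\|_2^2$.
\item The zero-order term gives $-c\|w_k\|_2^2-ck\int w_k$, where $c=\frac1{2(q-1)}$.
\item In the convection term, $\int\psi_A(V)\,\partial_{x_1}w_k\,dx=\int\partial_{x_1}\Theta(V)\,dx=0$ for the primitive $\Theta(s)=\int_k^{\max(s,k)}\psi_A$. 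Hence only $\int(\psi_A(V+U_0)-\psi_A(V))\,\partial_{x_1}w_k$ remains.
\end{itemize}
By \eqref{psi:ineq}, this remainder is bounded by $C(q)\int_{\{V>k\}}|U_0|\bigl(|V|^{q-1}+|U_0|^{q-1}\bigr)|\nabla w_k|$, with $C(q)$ independent of $A$. Writing $|V|\leqslant w_k+k$ on $\{V>k\}$ and applying Young's inequality, I expect
\[
\|\nabla w_k\|_2^2\leqslant C\Bigl(\int_{\{V>k\}}|U_0|^2\bigl(w_k^{2(q-1)}+k^{2(q-1)}+|U_0|^{2(q-1)}\bigr)dx+(k+1)\int w_k\,dx+\|w_k\|_2^2\Bigr).
\]
The term $|U_0|^2w_k^{2(q-1)}$ is the delicate one. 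For $q\leqslant2$ it is subquadratic in $w_k$. For $q>2$ I would use the decay $|U_0(x)|\leqslant C(1+|x|)^{-1/(q-1)}$ together with the smallness of $\|U_0\|_{L^\infty(B_R^c)}$. Combined with Sobolev and Hölder on the level set, this should make the term a small multiple of $\|\nabla w_k\|_2^2$ plus lower-order terms in $|\{V>k\}|$.

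\emph{Step 2: an $A$-independent starting bound.} The iteration needs $|\{|V|>k_0\}|\leqslant C(N,q,U_0)$ for some $k_0$. When $q<2$ and $N\geqslant2$, Lemmas \ref{C_psi} and \ref{C_psi_1} already give $A$-independent $L^2$ or $L^p$ bounds on the approximants $V_k$. These pass to $V$ by weak lower semicontinuity, and Chebyshev's inequality then gives the starting bound.

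For the remaining cases ($q\geqslant2$, or $N=1$) I would try two routes.
\begin{itemize}
\item First, rerun the energy estimate of Step 1 at level $k=0$ on each $B_k$. There the dangerous set $\{|V_k|\leqslant A+\|U_0\|_\infty\}$ from Lemma \ref{C_psi} should be replaced by an estimate that uses only $|U_0|\,|V|^{q-1}$, with $U_0$ small outside a ball.
\item Failing that, use the scalar structure through a comparison argument. The constants $\pm K$ are not supersolutions of the $U$-equation, because of the $-cU$ term. However, the profile of the heat flow from data $\pm\|C\|_\infty|x|^{-1/(q-1)}$ is a natural barrier: it satisfies the linear part, and the convection term could be controlled on the Dirichlet balls by a weak maximum principle for the $V_k$-equation.
\end{itemize}

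\emph{Step 3: De Giorgi iteration.} Set $a(k)=|\{V>k\}|$ and $\phi(k)=\|w_k\|_2^2$. Step 1 combined with Sobolev (or Gagliardo--Nirenberg for $N\leqslant2$) and Hölder should yield a nonlinear recursion of the form $\phi(h)\leqslant C(h-k)^{-\theta}\phi(k)^{1+\varepsilon}$ for $h>k\geqslant k_0$. The standard Stampacchia lemma then gives $V\leqslant k_0+d$, with $d$ depending only on $N,q,U_0$ and the Step 2 bound. The same argument applied to $(-V-k)_+$ bounds $V$ from below, and this yields \eqref{V:Linfty}.

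I expect Step 2 for $q\geqslant2$ to be the main obstacle. All the earlier whole-space estimates in that range depend on $A$, and the zero-order term $-cV$ has the wrong sign for $L^p$ bounds with $p>N(q-1)$. Controlling the bad term $ck\int w_k$ on large level sets will therefore require the measure bound of Step 2.
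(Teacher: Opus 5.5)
There is a genuine gap, and it is not a corner case. It sits in the range $q\geqslant2$: all of $N=1$ (where $q>2$ is forced), $N=2$ with $q\geqslant2$, and $N=3$ with $2\leqslant q<3$. There you have no $A$-independent starting bound, and for $q>2$ even Step 1 does not close. On a ball $B_R$ where $U_0$ is not small, $\int_{B_R}|U_0|^2w_k^{2(q-1)}\,dx$ is superquadratic in $w_k$. Sobolev (or Gagliardo--Nirenberg) then gives only a bound $C\|\nabla w_k\|_2^{2(q-1)}|\{V>k\}|^{\theta}$, which can be absorbed only if $\|\nabla w_k\|_2$ is already controlled independently of $A$.

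Your first route for Step 2 is the computation in the proof of Lemma \ref{C_psi}, and that proof shows why it fails. The only available control of $|U_0||V|^{q-1}|\nabla V|$ on $B_R$ uses $|V|\leqslant A+\|U_0\|_\infty$ on the support of $\psi_A(V+U_0)-\psi_A(V)$, which brings in $A$. Moreover, when $q\leqslant1+\frac2N$ (e.g.\ $N=1$ with $q\leqslant3$, or $N=2$, $q=2$), the coefficient $\frac N4-\frac1{2(q-1)}$ is nonpositive, so there is no $L^2$ coercivity at all.

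Your second route fails for a structural reason. Write the $U$-equation as $-\Delta U+(\psi_A'(U)e_1-\frac x2)\cdot\nabla U-cU=0$. A radial barrier $\bar\Phi=e^{\Delta}(\|C\|_\infty|x|^{-1/(q-1)})$ leaves the residual $\psi_A'(U)\,\partial_{x_1}\bar\Phi$. This is negative on $\{x_1>0\}$ and can be as large as $qA^{q-1}|\partial_{x_1}\bar\Phi|$. After the substitution $Z=U/\bar\Phi$ it becomes a zero-order coefficient of the wrong sign, so no $A$-uniform comparison follows.

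The missing idea is to choose the barrier independent of $x_1$, namely $\Phi(x')=e^{\Delta_{\bbfR^{N-1}}}|x'|^{-1/(q-1)}$. This requires $\frac1{q-1}<N-1$, i.e.\ $q>1+\frac1{N-1}$. It has four properties:
\begin{itemize}
\item it solves the linear part of the profile equation in $\bbfR^N$, because the coefficient $\frac1{2(q-1)}$ depends only on the degree of homogeneity, not on the dimension;
\item it kills the convection term identically;
\item it is comparable to $(1+|x'|^{1/(q-1)})^{-1}$;
\item it dominates $|U_0|$, since $|U_0(x)|\leqslant C(1+|x|^{1/(q-1)})^{-1}$.
\end{itemize}
The weak maximum principle for $Z=U_k/\Phi$ on $B_k$ then applies: $Z$ satisfies an equation with no zero-order term and boundary values $U_0/\Phi$. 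This gives $|U_k|\leqslant M_0\Phi$ directly, with no starting bound and no iteration. It covers every $N\geqslant2$ with $q\geqslant2$ except $N=2$, $q=2$.

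The two leftover cases need their own devices.
\begin{itemize}
\item For $N=1$, the paper integrates the ODE over each nodal interval of $V$. The boundary terms $-\sigma[V']$ have a sign and $\frac12-\frac1{2(q-1)}>0$, which gives an $A$-independent $L^1$ bound. An energy identity for $w=(|U|-\|U_0\|_\infty)_+$, where the convection term is an exact derivative, combined with $w^{3/2}\leqslant\frac32\|w\|_1^{1/2}\|w'\|_2$, finishes the argument.
\item For $N=2$, $q=2$, the paper tests with $|V|^{s-2}V$ for $s$ close to $1$. The bad term then carries a factor $s-1$ and is absorbed by $\frac1s-\frac12$. One then bootstraps to an $L^6$ bound and applies Lemma \ref{lem:large:p:U}.
\end{itemize}

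In the remaining range $N\geqslant2$, $q<2$, your De Giorgi scheme on $V$ is a plausible alternative. It starts from the same $A$-independent bounds of Lemmas \ref{C_psi} and \ref{C_psi_1}, with the level-set tests done on $V_k\in W^{1,2}_0(B_k)$ rather than on the whole-space limit. There the paper instead uses the barrier $\Phi$ when $q>1+\frac1{N-1}$. Otherwise it uses Sobolev to reach $L^{p^*}$ with $p^*>N(q-1)$, followed by the Moser iteration of Lemma \ref{lem:large:p:U} on $U$, in which the convection term drops out exactly.
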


\begin{proof}
Let $B_k$ be the approximating balls and let $V_k$ be the
corresponding solutions of the bounded-domain problems 
\eqref{Vk_equ}
used in
the proof of Theorem \ref{existence_I}. 
After passing to a subsequence, these approximations converge
locally to the whole-space solution $V$ constructed in Theorem
\ref{existence_I}. 
We now prove estimates for $V_k$ that are independent
of both $k$ and the truncation level $A>0$.
In the following, for each $k$, we use the function 
\(
U_k=V_k+U_0.
\)
Since $U_0$ satisfies the linear elliptic equation \eqref{eq:U0:A} below,
the function $U_k$ solves
\begin{equation}\label{Uk:bounded:theorem}
\begin{aligned}
-\Delta U_k-\frac{x}{2}\cdot\nabla U_k
-\frac{1}{2(q-1)}U_k
+\partial_{x_1}\psi_A(U_k)&=0
&&\text{in }B_k,\\
U_k&=U_0
&&\text{on }\partial B_k.
\end{aligned}
\end{equation}

We distinguish four cases.

\medskip
\noindent
\textit{Case 1: $N=1$.}
The condition on $q$ becomes $q>2$. Applying
Lemma \ref{lem:uniform-Linfty-N1} (see below) to $V_k$, we obtain
\[
\|V_k\|_{L^\infty(B_k)}
\leqslant C(q,U_0),
\]
where the constant is independent of $k$ and $A$. Passing to the
whole-space limit gives inequality \eqref{V:Linfty}.

\medskip
\noindent
\textit{Case 2: $N\geqslant2$ and
$q>1+\frac{1}{N-1}$.}
The bounded-domain solutions have the regularity required in
Lemma \ref{lem:Linfty}. Applying that lemma to $U_k$, which solves
\eqref{Uk:bounded:theorem}, gives
\[
\|U_k\|_{L^\infty(B_k)}
\leqslant C(N,q,U_0),
\]
where the constant is independent of $k$ and of the nonlinearity
$\psi_A$, and hence independent of $A$.
The heat-profile estimate in Lemma \ref{heat_L} gives
\(
\|U_0\|_{L^\infty(\bbfR^N)}
\leqslant C(N,q,U_0).
\)
Therefore,
\[
\|V_k\|_{L^\infty(B_k)}
\leqslant
\|U_k\|_{L^\infty(B_k)}
+\|U_0\|_{L^\infty(\bbfR^N)}
\leqslant C(N,q,U_0).
\]
Passing to the limit yields
\[
\|V\|_{L^\infty(\bbfR^N)}
\leqslant C(N,q,U_0).
\]

\medskip
\noindent
\textit{Case 3: $N\geqslant2$, $q<2$, and
$q\leqslant1+\frac{1}{N-1}$.}
Under the assumptions of the theorem, this remaining range
satisfies
\[
1+\frac1N<q\leqslant1+\frac2N,
\qquad q<2.
\]
Indeed, if $N\geqslant3$, then
\[
1+\frac{1}{N-1}<1+\frac2N<2,
\]
whereas for $N=2$ the inequality $q<2$ is part of the present case.

Choose, once and for all,
\begin{equation}\label{p:choice:uniform:theorem}
\max\left\{
1,\frac{N(q-1)}q
\right\}
<p<N(q-1).
\end{equation}
Such a choice is possible because $q>1+1/N$. By the
truncation-independent part of Lemma \ref{C_psi_1},
\begin{equation}\label{Vk:uniform:W1p:theorem}
\|V_k\|_{W^{1,p}(B_k)}
\leqslant C(N,q,p,U_0),
\end{equation}
where the constant is independent of $k$ and $A$.

Extend $V_k$ by zero outside $B_k$. Since
$V_k\in W^{1,p}_0(B_k)$, its zero extension belongs to
$W^{1,p}(\bbfR^N)$, and \eqref{Vk:uniform:W1p:theorem} remains
valid on $\bbfR^N$. By weak compactness and the convergence used
in Theorem \ref{existence_I}, we obtain
\begin{equation}\label{V:uniform:W1p:theorem}
\|V\|_{W^{1,p}(\bbfR^N)}
\leqslant C(N,q,p,U_0).
\end{equation}

Because
\(
p<N(q-1)\leqslant2\leqslant N,
\)
we have $p<N$. The Sobolev inequality therefore gives
\begin{equation}\label{V:pstar:theorem}
\|V\|_{L^{p^*}(\bbfR^N)}
\leqslant
C(N,p)\|V\|_{W^{1,p}(\bbfR^N)}
\leqslant C(N,q,p,U_0),
\end{equation}
where
\(
p^*=\frac{Np}{N-p}.
\)
Moreover,
\(p^*>N(q-1).
\)
Indeed, by a direct calculation
\begin{align*}
p^*>N(q-1) \quad \text{is equivalent to} \quad
p>\frac{N(q-1)}q,
\end{align*}
which follows from \eqref{p:choice:uniform:theorem}.

By Lemma \ref{heat_L},
\(
U_0\in L^{p^*}(\bbfR^N)
\)
because $p^*>N(q-1)$. Consequently, for
\(
U=V+U_0,
\)
we have
\begin{equation}\label{U:pstar:theorem}
\|U\|_{L^{p^*}(\bbfR^N)}
\leqslant
\|V\|_{L^{p^*}(\bbfR^N)}
+\|U_0\|_{L^{p^*}(\bbfR^N)}
\leqslant C(N,q,p,U_0),
\end{equation}
uniformly with respect to $A$.
Now, 
Lemma \ref{lem:large:p:U}, applied with $p_0=p^*$, yields
\[
\|U\|_{L^\infty(\bbfR^N)}
\leqslant
C(N,q,p^*)\|U\|_{L^{p^*}(\bbfR^N)}
\leqslant C(N,q,U_0).
\]
The constant in Lemma \ref{lem:large:p:U} is independent of the
nonlinearity and therefore independent of $A$. Finally,
\[
\|V\|_{L^\infty(\bbfR^N)}
\leqslant
\|U\|_{L^\infty(\bbfR^N)}
+\|U_0\|_{L^\infty(\bbfR^N)}
\leqslant C(N,q,U_0).
\]

\medskip
\noindent
\textit{Case 4: $N=2$ and $q=2$.} Use
Lemma~\ref{lem:uniform:Linfty:N2:q2}, below.
\end{proof}

The following lemmas were used in the proof of Theorem \ref{thm:uniform:estimates}. We begin with the truncation independent estimates in one dimension.

\begin{lemma}[Uniform one-dimensional \(L^\infty\) estimate]
\label{lem:uniform-Linfty-N1}
Let \(q>2\) and let \(\Omega=(a,b)\subset\mathbb R\). Suppose that
\(u_0:\mathbb R\setminus\{0\}\to\mathbb R\) is homogeneous of
degree \(-1/(q-1)\)
and set $U_0=e^\Delta u_0$.
Let $\psi$ be the truncated nonlinearity 
\eqref{psi1} with an arbitrary truncation level $A>0$.
Suppose that \(V\in W^{1,2}_0(a,b)\) is a weak solution of the ordinary differential equation
\begin{equation}
\label{ell:bounded:N1}
-V''-\frac{x}{2}V'
-\frac{1}{2(q-1)}V
+\frac{d}{dx}\psi_A(V+U_0)=0
\qquad\text{in }(a,b).
\end{equation}
Then
\[
\|V\|_{L^\infty(a,b)}
\leqslant C(q,u_0),
\]
where the constant is independent of \(A\) and of the interval
\((a,b)\).
\end{lemma}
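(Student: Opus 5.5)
The plan is to work with $U=V+U_0$, which solves
\[
-U''-\tfrac{x}{2}U'-\tfrac{1}{2(q-1)}U+\psi_A(U)'=0\quad\text{in }(a,b),\qquad U=U_0\ \text{at } a,b,
\]
and to combine a one-dimensional energy estimate with the embedding
\[
\|w\|_\infty^2\leqslant\|w\|_2\|w'\|_2 .
\]
Since $\|U_0\|_\infty\leqslant C(q,u_0)$ by Lemma~\ref{heat_L}, it suffices to bound $U$ (equivalently $V$) in $L^\infty$.

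\textbf{Step 1.} First I would test \eqref{ell:bounded:N1} with the regularized power $\Phi_\delta(V)$, where $p\in(1,q-1)$. This range is nonempty since $q>2$, and it makes $\gamma_p=\tfrac1{2p}-\tfrac1{2(q-1)}>0$. As in the proof of Lemma~\ref{C_psi_1}, the convective term reduces to $-\int\partial_uP_\delta(V,U_0)U_0'\,dx$. The truncation-independent bound \eqref{Cpsi1:Pu-uniform} then gives
\[
E+\gamma_p\|V\|_p^p\leqslant C\int|U_0'|\,|V|^{p+q-2}\,dx+C\int|U_0|^{q-1}|U_0'|\,|V|^{p-1}\,dx,
\qquad E=\|(|V|^{\frac p2-1}V)'\|_2^2 .
\]
The second integral is harmless. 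Young's inequality absorbs it into $\varepsilon\|V\|_p^p$ at the cost of $\||U_0|^{q-1}U_0'\|_p^p$, which is finite by Lemma~\ref{heat_L}.

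\textbf{Step 2 (the main obstacle).} The first integral is superlinear in the natural quantities. With $w=|V|^{\frac p2-1}V$ it is bounded by
\[
\|U_0'\|_1\|V\|_\infty^{p+q-2}\leqslant\|U_0'\|_1\bigl(E\,\|V\|_p^p\bigr)^{\frac{p+q-2}{2p}},
\]
and since $q>2$ the exponent $\frac{p+q-2}{p}$ exceeds $1$. So Young's inequality alone cannot close the estimate uniformly in $A$; this is exactly why the earlier Case~2 of Lemma~\ref{C_psi_1} depended on $A$. To overcome this I would split $(a,b)$ into $\{|x|\leqslant R\}$ and $\{|x|>R\}$. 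On $\{|x|>R\}$, I would use $|U_0'(x)|\leqslant C|x|^{-1-\frac1{q-1}}$ from Lemma~\ref{heat_L} together with a weighted Hardy-type bound $\|V\|_{L^\infty(|x|>R)}$, so that this part becomes small relative to the left-hand side. On the bounded set $\{|x|\leqslant R\}$, where the weight is harmless, I would seek an a priori $L^\infty$ bound from an ODE argument rather than from energy.

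\textbf{Step 3.} For the local $L^\infty$ bound I would use the exact first integral available in one dimension. Writing the equation as
\[
\bigl(U'-\psi_A(U)+\tfrac{x}{2}U\bigr)'=\bigl(\tfrac12-\tfrac1{2(q-1)}\bigr)U,
\]
whose coefficient is positive because $q>2$, and evaluating the flux at an interior extremum $x_0$ of $U$ (where $U'(x_0)=0$), gives for $x$ between $x_0$ and a boundary point
\[
U'(x)=\psi_A(U(x))-\psi_A(U(x_0))-\tfrac{x}{2}U(x)+\tfrac{x_0}{2}U(x_0)+\bigl(\tfrac12-\tfrac1{2(q-1)}\bigr)\int_{x_0}^xU .
\]
Since $\psi_A$ is monotone, the first difference has a favourable sign on the appropriate side of a maximum or minimum. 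The remaining terms would be controlled by the $L^p$ bound of Step~1 through H\"older's inequality on bounded intervals. This should show that $U$ cannot exceed a level $M=M(q,u_0)$ without forcing $|U|$ to remain large on an interval of definite length, which would contradict the $L^p$ control. Only the monotonicity of $\psi_A$ is used, so $M$ is independent of $A$.

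As a fallback for Step~3 I would try comparison with barriers $K|x|^{-\frac1{q-1}}$. A direct computation shows that $h(x)=K|x|^{-1/(q-1)}$ annihilates the linear part away from $0$. The terms $-h''$ and $\psi(h)'$ scale alike, so for $x<0$ and $K^{q-1}>1/(q-1)$ the function $h$ is a supersolution. The difficulty there is treating the region $x>0$ and the truncation, so I expect this step to be the crux of the proof.
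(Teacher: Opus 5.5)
Your proposal has a genuine gap, and it sits exactly where you place the crux: none of Steps~2--3 produces an $A$-independent bound.

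\textbf{Why Steps 2 and 3 do not close.} In Step~2, making the coefficient of $\int|U_0'||V|^{p+q-2}$ small on $\{|x|>R\}$ does not help. That term is superlinear in $E+\|V\|_p^p$; you computed the exponent $\frac{p+q-2}{p}>1$ yourself. A small multiple of a superlinear quantity cannot be absorbed without an a priori bound on $V$, which is what is being proved. Step~3 then uses ``the $L^p$ bound of Step~1'', which exists only after Step~2 is closed, so the argument is circular. In addition, the extremum $x_0$ can lie anywhere in the arbitrary interval $(a,b)$. Hence the term $\frac{x_0}{2}U(x_0)$ is not controlled by working on $\{|x|\leqslant R\}$.

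\textbf{Why the barrier fallback fails.} It fails as you suspected. For $x>0$, the linear part annihilates $h=K|x|^{-1/(q-1)}$, and the operator applied to $h$ equals $-h''+\psi_A(h)'<0$. So $h$ is a subsolution there, not a supersolution. Near $x=0$ it is singular, and the truncation switches off the convective gain wherever $h>A$.

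\textbf{Step 1 of the missing idea: an $L^1$ bound on nodal intervals.} Use the conservation form you wrote in Step~3, but apply it to $V$ on nodal intervals rather than to $U$ at extrema. Indeed,
\[
\Bigl(V'-\psi_A(V+U_0)+\frac x2V\Bigr)'=\beta V,
\qquad
\beta=\frac12-\frac1{2(q-1)}>0 .
\]
Integrate this over a component $(\ell,r)$ of $\{V\neq0\}$ on which $V$ has sign $\sigma$. At the endpoints $V=0$, so the nonlinear flux reduces to $\psi_A(U_0)$, and $-\sigma[V']_\ell^r\geqslant0$. Hence $\beta\int_\ell^r|V|\leqslant\int_\ell^r|(\psi_A(U_0))'|$. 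Summing over components gives
\[
\|V\|_{L^1(a,b)}\leqslant K:=\frac q\beta\int_{\mathbb R}|U_0|^{q-1}|U_0'|\,dx,
\]
which is independent of $A$ and $(a,b)$. Formally, this is the endpoint $p=1$ of your Step~1. With the sharp constant, the superlinear term carries a factor $p-1$ and disappears.

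\textbf{Step 2 of the missing idea: from $L^1$ to $L^\infty$.} Do not return to the $V$-equation, whose convective term is what produces the $U_0'$ forcing. Instead, test the $U$-equation with $\operatorname{sgn}(U)w$, where $w=(|U|-B)_+$ and $B=\|U_0\|_\infty$. This test function is a function of $U$ that vanishes on $\{|U|\leqslant B\}$, and that set contains the endpoints. So the convective term integrates to zero exactly, and no $U_0'$ term appears. Set $M=\|w\|_\infty$ and $c=\frac1{2(q-1)}$, and use $0\leqslant w\leqslant|V|$. You get
\[
\|w'\|_2^2\leqslant c\|w\|_2^2+cB\|w\|_1\leqslant cK(M+B).
\]
The one-dimensional inequality
\[
M^3\leqslant\frac94\|w\|_1\|w'\|_2^2\leqslant\frac{9c}{4}K^2(M+B)
\]
then closes the argument, because the left side is cubic in $M$ and the right side is linear.
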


\begin{proof}
Set
\[
c=\frac{1}{2(q-1)},
\qquad
\beta=\frac12-c>0,
\qquad
B=\|U_0\|_{L^\infty(\mathbb R)},
\]
and define
\[
K=\frac{q}{\beta}
\int_{\mathbb R}|U_0|^{q-1}|U_0'|\,dx.
\]
The heat-profile estimates imply that $B,K<\infty$.
Indeed,
\[
|U_0(x)|^{q-1}|U_0'(x)|
\leqslant
C(q,u_0)(1+|x|)^{-q/(q-1)},
\]
and $q/(q-1)>1$.

Since $\psi_A$ is globally Lipschitz, the equation gives
$V\in W^{2,2}(a,b)\subset C^1([a,b])$.
We first prove that
\begin{equation}
\label{N1:uniform:L1}
\|V\|_{L^1(a,b)}\leqslant K.
\end{equation}
Let $I=(\ell,r)$ be a connected component of $\{V\neq0\}$,
and let $\sigma\in\{-1,1\}$ be the sign of $V$ on $I$.
Integrating \eqref{ell:bounded:N1} over $I$ and multiplying
by $\sigma$, we obtain
\[
-\sigma[V']_\ell^r
+\beta\int_I|V|\,dx
=
-\sigma[\psi_A(U_0)]_\ell^r.
\]
Here $V(\ell)=V(r)=0$ and
$-\sigma[V']_\ell^r\geqslant0$. Consequently,
\[
\beta\int_I|V|\,dx
\leqslant
\int_I|(\psi_A(U_0))'|\,dx.
\]
Summing over the components and using
$|\psi_A'(s)|\leqslant q|s|^{q-1}$ proves
\eqref{N1:uniform:L1}.

Now set $U=V+U_0$. Since
\[
-U_0''-\frac{x}{2}U_0'-cU_0=0,
\]
we have
\[
-U''-\frac{x}{2}U'-cU+(\psi_A(U))'=0
\qquad\text{in }(a,b),
\]
with $U=U_0$ at the endpoints.
Define
\[
w=(|U|-B)_+.
\]
Then $w\in W^{1,2}_0(a,b)$ and $0\leqslant w\leqslant|V|$,
so \eqref{N1:uniform:L1} gives
\[
\|w\|_{L^1(a,b)}\leqslant K.
\]

Test the equation for $U$ with $\operatorname{sgn}(U)w$.
The convection term vanishes: its integrand is the derivative
of $P_A(U)$, where
\[
P_A(s)=
\int_0^s
\psi_A'(t)\operatorname{sgn}(t)(|t|-B)_+\,dt,
\]
and $P_A(U(a))=P_A(U(b))=0$.
Integration by parts therefore yields
\[
\|w'\|_2^2+
\left(\frac14-c\right)\|w\|_2^2
=
cB\|w\|_1,
\]
where all norms are taken over $(a,b)$.
Writing 
$$M=\|w\|_\infty,$$ we deduce that
\begin{equation}
\label{N1:excess:energy}
\|w'\|_2^2
\leqslant c\|w\|_2^2+cB\|w\|_1
\leqslant cK(M+B).
\end{equation}

Finally, since $w(a)=0$, the fundamental theorem of calculus
and the Cauchy--Schwarz inequality give
\[
w(x)^{3/2}
=
\frac32\int_a^x w^{1/2}w'\,dy
\leqslant
\frac32\|w\|_1^{1/2}\|w'\|_2.
\]
Taking the supremum and using \eqref{N1:excess:energy},
we obtain
\[
M^3
\leqslant
\frac94K\|w'\|_2^2
\leqslant
\frac{9c}{4}K^2(M+B).
\]
Thus either $M\leqslant B$, or
$M\leqslant3\sqrt{c/2}\,K$.
Since $|V|\leqslant|U|+B\leqslant w+2B$, it follows that
\[
\|V\|_{L^\infty(a,b)}
\leqslant
3B+3\sqrt{\frac c2}\,K
=
C(q,u_0).
\]
The right-hand side is independent of $A$ and $(a,b)$.
\end{proof}

\begin{lemma}
\label{lem:Linfty}
Assume that $N\geqslant2$ and
\begin{equation}
\label{q:maximum}
q>1+\frac{1}{N-1},
\qquad\text{or equivalently}\qquad
\frac{1}{q-1}<N-1.
\end{equation}
Let $\Omega\subset\bbfR^N$ be a bounded domain with $C^2$ boundary,
and let
\(
U_0=e^\Delta u_0,
\)
where $u_0$ is given by \eqref{u0}. Let
$\psi\in W^{1,\infty}(\bbfR)$ be arbitrary.
Suppose that $U\in W^{1,2}(\Omega)$ is a weak solution of
\begin{equation}
\label{U:bounded}
-\Delta U-\frac{x}{2}\cdot\nabla U- \frac{1}{2(q-1)}U
+
\partial_{x_1}\psi(U)=0
\qquad\text{in }\Omega,
\end{equation}
with boundary condition
\begin{equation}
\label{U:boundary}
U=U_0
\qquad\text{on }\partial\Omega
\end{equation}
in the sense of traces. Then
\begin{equation}
\label{U:Linfty}
\|U\|_{L^\infty(\Omega)}
\leqslant C(N,q,U_0),
\end{equation}
where the constant is independent of $\Omega$ and $\psi$.
%
\end{lemma}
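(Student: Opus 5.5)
The plan is to bound $U$ above and below by a single barrier $W$ that does not depend on $x_1$ and is killed exactly by the linear part of the operator. Then $\partial_{x_1}\psi(W)=0$, and the nonlinearity $\psi$ drops out of the comparison, so the constant cannot depend on $\psi$ or $\Omega$. Set $c=\frac1{2(q-1)}$, so that $2c=\frac1{q-1}<N-1$ by \eqref{q:maximum}. Write $x=(x_1,x')$ with $x'\in\bbfR^{N-1}$ and put $M=\|C\|_{L^\infty}$. Define
\[
W=e^{\Delta}w_0,\qquad w_0(x)=M|x'|^{-2c}.
\]
This is the heat evolution at time $1$ in $\bbfR^N$, which here equals the heat evolution in $\bbfR^{N-1}$ because $w_0$ is $x_1$-independent.

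First I will record the properties of $W$.
\begin{itemize}
\item Since $2c<N-1$, $w_0$ is locally integrable in $\bbfR^{N-1}$. Hence $W$ is smooth, strictly positive, bounded, and $x_1$-independent, with $\|W\|_\infty\le C(N,q,M)$ by the same computation as in Lemma~\ref{heat_L} carried out in dimension $N-1$.
\item Since $w_0$ is homogeneous of degree $-2c$, the function $t^{-c}W(x/\sqrt t)$ is the self-similar heat solution with initial datum $w_0$. Hence
\[
-\Delta W-\tfrac x2\cdot\nabla W-cW=0
\]
exactly, as for $U_0$ in \eqref{eq:U0:A}.
\item Since $|x'|\le|x|$, we have $|u_0(x)|\le M|x|^{-2c}\le w_0(x)$. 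Positivity of the heat kernel then gives $|U_0|\le e^\Delta|u_0|\le W$ on $\bbfR^N$, and in particular on $\partial\Omega$.
\end{itemize}
This first step is the only place where the restriction $q>1+\frac1{N-1}$ enters.

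Next I will check regularity. Because $\psi$ is globally Lipschitz, $\partial_{x_1}\psi(U)=\psi'(U)\partial_{x_1}U\in L^2$. Elliptic bootstrap then gives $U\in W^{2,r}_{\rm loc}(\Omega)$ for all $r<\infty$, and $U$ is continuous up to $\partial\Omega$, since $\partial\Omega\in C^2$ and the boundary datum $U_0$ is smooth. This is enough for the Bony (or Aleksandrov) maximum principle for strong solutions.

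The main step is the quotient $h=U/W$. Using $\partial_{x_1}W=0$ we get $\partial_{x_1}U=W\partial_{x_1}h$, so the convection term equals $\psi'(U)W\partial_{x_1}h$. Substituting $U=Wh$ and using $-\Delta W-\frac x2\cdot\nabla W-cW=0$, every zeroth-order term disappears, and $h$ solves the linear equation
\[
-W\Delta h-\Bigl(2\nabla W+\tfrac x2 W-\psi'(U)W e_1\Bigr)\cdot\nabla h=0\qquad\text{in }\Omega .
\]
Dividing by $W>0$ gives a uniformly elliptic operator on the bounded set $\Omega$, with locally bounded drift and no zeroth-order term. The coefficient $\psi'(U)$ is merely $L^\infty$, which is harmless because it is frozen as a given bounded drift. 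By the weak maximum principle for strong solutions, $\sup_\Omega h\le\sup_{\partial\Omega}h$ and $\inf_\Omega h\ge\inf_{\partial\Omega}h$. On $\partial\Omega$ we have $|h|=|U_0|/W\le1$, so $|U|\le W$ in $\Omega$. This yields
\[
\|U\|_{L^\infty(\Omega)}\le\|W\|_\infty\le C(N,q,\|C\|_\infty),
\]
and the bound is independent of $\Omega$ and $\psi$.

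The main obstacle I expect is justifying this maximum principle rigorously. The cancellation of the $-cU$ term, which has the wrong sign, relies on dividing by the exact $L$-harmonic barrier $W$. That division must be done at the level of $W^{2,r}_{\rm loc}$ strong solutions with the merely bounded coefficient $\psi'(U)$. If this is delicate, my fallback is to argue weakly. I would test the $h$-equation, written in divergence form after multiplying by the weight $W\,e^{|x|^2/4}$, with $(h-1)_+$, which lies in $W^{1,2}_0(\Omega)$ because $h\le1$ on $\partial\Omega$. The first-order drift term would then be absorbed by a standard Stampacchia iteration on the level sets $\{h>k\}$ for $k\ge1$.
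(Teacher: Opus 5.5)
Your proposal is correct and follows the paper's argument. Your barrier $W=e^{\Delta}\bigl(\|C\|_\infty|x'|^{-2c}\bigr)$ is exactly $\|C\|_\infty\Phi$ for the paper's $x_1$-independent profile $\Phi$, and, as in the paper, dividing by it removes the zeroth-order term and the $\psi'(U)$-dependent term before you apply the maximum principle to the quotient.

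The one small deviation is in your favour. You get $|U_0|\le W$ directly from positivity of the heat kernel and $|u_0|\le\|C\|_\infty|x'|^{-2c}$, whereas the paper uses the two-sided decay bounds on $\Phi$. To keep $W$ strictly positive even when $u_0\equiv0$, use $\max\{\|C\|_\infty,1\}$ in place of $\|C\|_\infty$.
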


\begin{proof}

Set
\[
a=\frac{1}{q-1},
\qquad
c=\frac{a}{2}=\frac{1}{2(q-1)}.
\]
By the Sobolev chain rule,
\(
\partial_{x_1}\psi(U)
=
\psi'(U)\partial_{x_1}U
\)
a.e.~in $\Omega$.
Consequently, equation \eqref{U:bounded} can be written as
\begin{equation}
\label{U:linearized}
-\Delta U+
\left(
\psi'(U)e_1-\frac{x}{2}
\right)\cdot\nabla U
-cU=0
\qquad\text{in }\Omega.
\end{equation}

Write
\(
x=(x_1,x'),
\)
where
\(
x'\in\bbfR^{N-1},
\)
and define
\begin{equation}
\label{Phi:definition}
\Phi(x')
=
e^{\Delta_{\bbfR^{N-1}}}|x'|^{-a}
=
\int_{\bbfR^{N-1}}
G_{N-1}(x'-y,1)|y|^{-a}\,\ud y,
\end{equation}
where $G_{N-1}$ denotes the heat kernel for the heat equation on $\bbfR^{N-1}.$
Condition \eqref{q:maximum} gives
\[
0<a<N-1.
\]
Thus, $|x'|^{-a}$ is locally integrable in
$\bbfR^{N-1}$, and, by Lemma \ref{heat_L}, the function $\Phi$ is smooth, strictly positive, and
bounded. Moreover, by Lemma~\ref{heat_L}, it satisfies the linear equation
\begin{equation}
\label{Phi:reduced-equation}
-\Delta_{x'}\Phi
-\frac{x'}{2}\cdot\nabla_{x'}\Phi
-\frac{a}{2}\Phi=0
\qquad\text{in }\bbfR^{N-1}.
\end{equation}
We extend $\Phi$ to $\bbfR^N$ by setting
\(
\Phi(x)=\Phi(x').
\)
Since
\(
\partial_{x_1}\Phi=0
\)
and
\(
c=\frac a2,
\)
equation \eqref{Phi:reduced-equation} implies
\begin{equation}
\label{Phi:equation}
-\Delta\Phi+
\left(
\psi'(U)e_1-\frac{x}{2}
\right)\cdot\nabla\Phi
-c\Phi=0
\qquad\text{in }\Omega.
\end{equation}
The term containing $\psi'(U)$ vanishes because $\Phi$ is
independent of $x_1$.

We next recall the two-sided estimate
\begin{equation}
\label{Phi:two-sided}
\frac{c_0}{1+|x'|^a}
\leqslant
\Phi(x')
\leqslant
\frac{C_0}{1+|x'|^a},
\qquad x'\in\bbfR^{N-1},
\end{equation}
where $c_0,C_0>0$ depend only on $N$ and $q$. 
These estimates follow from the heat-profile asymptotics recalled in
Lemma~\ref{L:A3} below.
The heat-profile estimate recalled in Lemma \ref{heat_L} and applied to $U_0=e^\Delta u_0$ gives 
\[
|U_0(x)|
\leqslant
\frac{C(U_0)}{1+|x|^a}
\leqslant
\frac{C(U_0)}{1+|x'|^a}.
\]
Combining this estimate with the lower bound in
\eqref{Phi:two-sided}, we obtain
\begin{equation}
\label{U0:Phi}
|U_0(x)|
\leqslant
M_0\Phi(x')
\qquad\text{for all }x\in\bbfR^N,
\end{equation}
where $M_0=M_0(N,q,U_0)>0$. In particular, $M_0$ is independent
of $\Omega$ and $\psi$.

Now we define
\[
Z=\frac{U}{\Phi}.
\]
Since $\Omega$ is bounded and $\Phi$ is smooth and strictly
positive on $\overline\Omega$, the function $1/\Phi$ is bounded on
$\Omega$,
and hence $Z\in W^{1,2}(\Omega)$. Substituting $U=\Phi Z$ into
\eqref{U:linearized} and using \eqref{Phi:equation}, we obtain
\begin{equation}
\label{Z:equation}
-\Delta Z+b(x)\cdot\nabla Z=0
\qquad\text{in }\Omega,
\end{equation}
where
\begin{equation}\label{b:in:Z:equation}
b(x)
=
\psi'(U)e_1-\frac{x}{2}
-2\frac{\nabla\Phi}{\Phi}.
\end{equation}
This is a standard technique commonly used when applying the maximum principle to elliptic equations with a zero-order term (see, e.g., \cite[Chap.~2, Sec.~5]{PW84}); we therefore omit the calculations leading to equation \eqref{Z:equation}.
Note that, for each fixed $\Omega$ and $\psi$, we have
\(
b\in L^\infty(\Omega),
\)
because $\psi'\in L^\infty(\bbfR)$, $\Omega$ is bounded, and
$\nabla\Phi/\Phi$ is bounded on $\overline\Omega$.

Both \(Z\) and \(-Z\) are sufficiently regular solutions of the linear elliptic equation \eqref{Z:equation}. Therefore, the weak maximum principle can be applied to obtain
\begin{equation}
\label{Z:maximum}
\|Z\|_{L^\infty(\Omega)}
\leqslant
\|Z\|_{L^\infty(\partial\Omega)}.
\end{equation}
By the boundary condition \eqref{U:boundary},
\[
Z=\frac{U_0}{\Phi}
\qquad\text{on }\partial\Omega.
\]
Therefore, estimate \eqref{U0:Phi} gives
\(
|Z|\leqslant M_0
\)
on $\partial\Omega$.
It follows from \eqref{Z:maximum} that
\[
|Z(x)|\leqslant M_0
\qquad\text{for a.e. }x\in\Omega.
\]
Consequently,
\[
|U(x)|
\leqslant
M_0\Phi(x')
\qquad\text{for a.e. }x\in\Omega.
\]
Using the upper bound in \eqref{Phi:two-sided}, we conclude that
\[
\|U\|_{L^\infty(\Omega)}
\leqslant
M_0C_0
=
C(N,q,U_0).
\]
This estimate is independent of the domain $\Omega$ and of the nonlinearity $\psi$.
\end{proof}

The following lemma bootstraps a sufficiently high $L^{p_0}$ bound to an
$L^\infty$ bound. The estimate is independent of the nonlinearity $\psi$.

\begin{lemma}\label{lem:large:p:U}
Let $N\geqslant1$, $q>1$, and let
$\psi\in W^{1,\infty}(\bbfR)$ be arbitrary. Suppose that $U$ is
a sufficiently regular solution of
\begin{equation}\label{ell_U:large:p}
-\Delta U
-\frac12 x\cdot\nabla U
-\frac{1}{2(q-1)}U
+\partial_{x_1}\psi(U)=0
\qquad\text{in }\bbfR^N,
\end{equation}
such that the integrations by parts below are justified. Assume
that
\[
U\in L^{p_0}(\bbfR^N)
\qquad\text{for some}\qquad
p_0>\max\{1,N(q-1)\}.
\]
Then
there exists a constant $C=C(N,q,p_0)>0$, independent of
$\psi$, such that
\begin{equation}\label{U:Linfty:large:p}
\|U\|_{L^\infty(\bbfR^N)}
\leqslant
C(N,q,p_0)\|U\|_{L^{p_0}(\bbfR^N)}.
\end{equation}
\end{lemma}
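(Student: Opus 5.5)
The plan is a Moser iteration (De Giorgi--Nash--Moser in $L^p$ form) driven by the energy identity of Remark~\ref{rem:Lp:self}. The key point is that for $p>N(q-1)$ the zero-order coefficient has the \emph{wrong} sign, so the identity gives a gradient bound rather than vanishing. The convection term drops out identically, whatever $\psi$ is.

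\textbf{Step 1 (energy identity).} Test \eqref{ell_U:large:p} with $|U|^{p-2}U$ for $p\geqslant p_0$; for $p<2$ use the regularization $\Phi_\delta$ from the proof of Lemma~\ref{C_psi_1}. Two computations enter. First, $\frac12\int (x\cdot\nabla U)|U|^{p-2}U=-\frac{N}{2p}\|U\|_p^p$. Second, the convection term vanishes, because
\[
\int \partial_{x_1}\psi(U)\,|U|^{p-2}U\,dx=\int \partial_{x_1}G_p(U)\,dx=0,
\qquad G_p(s)=\int_0^s\psi'(t)|t|^{p-2}t\,dt,
\]
which is where independence from $\psi$ comes from. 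This yields
\[
\frac{4(p-1)}{p^2}\|\nabla |U|^{p/2}\|_2^2=\Bigl(\frac{1}{2(q-1)}-\frac{N}{2p}\Bigr)\|U\|_p^p\leqslant \frac{1}{2(q-1)}\|U\|_p^p .
\]
In particular every $L^p$ norm with $p\geqslant p_0$ is finite, provided we first know it is finite. I would assume this as part of ``sufficiently regular,'' or alternatively obtain it by running the iteration with truncated powers $\min(|U|,L)$ and then letting $L\to\infty$.

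\textbf{Step 2 (one iteration step).} Set $w=|U|^{p/2}$. Using Gagliardo--Nirenberg (or Sobolev with exponent $\chi=N/(N-2)$ when $N\geqslant3$; any $\chi>1$ when $N\leqslant2$) together with interpolation between $\|w\|_2$ and $\|w\|_{2\chi}$, we get
\[
\|U\|_{p\chi}^{p}=\|w\|_{2\chi}^2\leqslant C\bigl(\|\nabla w\|_2^2+\|w\|_2^2\bigr)\leqslant C\,p\,\|U\|_p^p .
\]
Here the factor $p^2/(4(p-1))\lesssim p$ comes from Step 1, and the Sobolev constant is fixed. Taking $p$-th roots gives $\|U\|_{p\chi}\leqslant (Cp)^{1/p}\|U\|_p$. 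Inhomogeneous Sobolev is fine for $N\leqslant2$, since $\|w\|_2^2=\|U\|_p^p$ is controlled.

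\textbf{Step 3 (iteration).} With $p_j=p_0\chi^j$, the product $\prod_j (Cp_j)^{1/p_j}$ converges because $\sum_j j\chi^{-j}<\infty$. Letting $j\to\infty$ gives $\|U\|_\infty\leqslant C(N,q,p_0)\|U\|_{p_0}$, with no dependence on $\psi$.

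The main obstacle is justification: that $U\in L^{p}$ for all large $p$ (so the iteration is not vacuous), and that the boundary terms at infinity vanish in the integrations by parts, notably $\int \partial_{x_1}G_p(U)=0$ and $\int x\cdot\nabla(|U|^p)=-N\|U\|_p^p$. I would handle this with cutoffs $\eta_R$ and truncations $T_L(U)$ in the test function $|T_L U|^{p-2}T_LU\,\eta_R^2$. The commutator terms are controlled by $\|U\|_p^p$ on annuli, and the $x\cdot\nabla\eta_R$ term is of size $O(1)\int_{R<|x|<2R}|U|^p$, which tends to zero. The statement explicitly allows assuming enough regularity for these steps.
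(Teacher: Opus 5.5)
Your proposal is correct and is essentially the paper's proof. It uses the same energy identity: the convection term is a total derivative and drops out independently of $\psi$, while the wrong-signed zero-order term gives $\|\nabla |U|^{p/2}\|_2^2=D_p\|U\|_p^p$ with $D_p\lesssim p$. This is followed by the same Moser iteration $p_j=\chi^j p_0$ with a convergent product $\prod_j (Cp_j)^{1/p_j}$.

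The differences are cosmetic. For $N\leqslant 2$ you use the inhomogeneous embedding $W^{1,2}\hookrightarrow L^{2\chi}$, where the paper uses Gagliardo--Nirenberg for $N=2$ and the one-step bound $\|w\|_\infty^2\leqslant C\|w\|_2\|w'\|_2$ for $N=1$. Your worry about finiteness of the higher norms is unnecessary: once the cutoff justification is carried out at level $p_j$, each step itself yields $U\in L^{p_{j+1}}$ from $U\in L^{p_j}$.
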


\begin{proof}
Let $p>1$ and assume initially that
$U\in L^p(\bbfR^N)$. Testing equation \eqref{ell_U:large:p} with
$|U|^{p-2}U$ gives
\begin{equation}\label{large:p:energy}
\frac{4(p-1)}{p^2}
\left\|\nabla |U|^{p/2}\right\|_2^2
+
\left(
\frac{N}{2p}-\frac{1}{2(q-1)}
\right)\|U\|_p^p
=0.
\end{equation}
In this calculation, we use the following two identities obtained by
integration by parts:
\[
\int_{\bbfR^N}
 \partial_{x_1}\psi(U)|U|^{p-2}U\,\ud x
=0
\]
and
\[
-\frac12
\int_{\bbfR^N}
x\cdot\nabla U\,|U|^{p-2}U\,\ud x
=
-\frac{1}{2p}
\int_{\bbfR^N}x\cdot\nabla(|U|^p)\,\ud x
=
\frac{N}{2p}\|U\|_p^p.
\]
This calculation may be justified by first using regularized
powers and compactly supported cutoff functions and then passing
to the limit. Notice that
no estimate involving $\psi$ remains in the energy
identity \eqref{large:p:energy}.






If $p>N(q-1)$, set
\[
\beta_p
=
\frac{1}{2(q-1)}-\frac{N}{2p}>0.
\]
Then equation \eqref{large:p:energy} becomes
\begin{equation}\label{large:p:gradient:exact}
\left\|\nabla |U|^{p/2}\right\|_2^2
=
D_p\|U\|_p^p,
\qquad
D_p
=
\frac{p^2}{4(p-1)}\beta_p.
\end{equation}
Moreover, for $p\geqslant2$,
\[
D_p
\leqslant
\frac{p^2}{8(q-1)(p-1)}
\leqslant
\frac{p}{4(q-1)},
\]
which proves 
\begin{equation}\label{large:p:grad}
\left\|\nabla |U|^{p/2}\right\|_{L^2(\bbfR^N)}^2
\leqslant
C(q)p\|U\|_{L^p(\bbfR^N)}^p.
\end{equation}

We now bootstrap the integrability of $U$.

\medskip
\noindent
\textit{Case 1: $N\geqslant3$.}
Set
\[
\kappa=\frac{N}{N-2}.
\]
Applying the Sobolev inequality to
$w=|U|^{p/2}$ and using
\eqref{large:p:gradient:exact}, we obtain
\[
\|U\|_{\kappa p}^p
=
\left\||U|^{p/2}\right\|_{\frac{2N}{N-2}}^2
\leqslant
C(N)\left\|\nabla |U|^{p/2}\right\|_2^2
=
C(N)D_p\|U\|_p^p.
\]
Hence
\begin{equation}\label{large:p:iteration:step}
\|U\|_{\kappa p}
\leqslant
\bigl(C(N)D_p\bigr)^{1/p}\|U\|_p.
\end{equation}

Let
\(
p_j=\kappa^jp_0.
\)
Since $p_j\geqslant p_0>1$,
\[
D_{p_j}\leqslant C(q,p_0)p_j.
\]
Iterating \eqref{large:p:iteration:step} yields
\[
\|U\|_{p_m}
\leqslant
\prod_{j=0}^{m-1}
\bigl(C(N)D_{p_j}\bigr)^{1/p_j}
\|U\|_{p_0}.
\]
The product is bounded uniformly in $m$ because
\[
\sum_{j=0}^\infty
\frac{\log\bigl(\max\{1,C(N)D_{p_j}\}\bigr)}{p_j}
\leqslant
C\sum_{j=0}^\infty
\frac{1+\log p_j}{p_j}
<\infty.
\]
Since $p_m\to\infty$, it follows that
\(
\|U\|_\infty
\leqslant
C(N,q,p_0)\|U\|_{p_0}.
\)

\medskip
\noindent
\textit{Case 2: $N=2$.}
The Gagliardo--Nirenberg inequality gives
\[
\|w\|_4^2
\leqslant
C\|w\|_2\|\nabla w\|_2.
\]
Taking $w=|U|^{p/2}$ and using
\eqref{large:p:gradient:exact}, we obtain
\[
\|U\|_{2p}^p
\leqslant
C\|U\|_p^{p/2}
\left\|\nabla |U|^{p/2}\right\|_2
=
CD_p^{1/2}\|U\|_p^p.
\]
Therefore,
\[
\|U\|_{2p}
\leqslant
\bigl(CD_p^{1/2}\bigr)^{1/p}\|U\|_p.
\]
Taking $p_j=2^jp_0$ and iterating as above gives
\[
\|U\|_\infty
\leqslant
C(q,p_0)\|U\|_{p_0}.
\]

\medskip
\noindent
\textit{Case 3: $N=1$.}
Using the inequality
\(
\|w\|_\infty^2
\leqslant
C\|w\|_2\|w'\|_2
\)
with $w=|U|^{p_0/2}$, we obtain
\begin{align*}
\|U\|_\infty^{p_0}
\leqslant
C\|U\|_{p_0}^{p_0/2}
\left\|\partial_x|U|^{p_0/2}\right\|_2
=
CD_{p_0}^{1/2}\|U\|_{p_0}^{p_0}.
\end{align*}
Consequently,
\[
\|U\|_\infty
\leqslant
\bigl(CD_{p_0}^{1/2}\bigr)^{1/p_0}
\|U\|_{p_0}
\leqslant
C(q,p_0)\|U\|_{p_0}.
\]
%
\end{proof}


The following lemma improves truncation-independent estimates in the remaining case
$N=2$ and $q=2$. 

\begin{lemma}
\label{lem:uniform:Linfty:N2:q2}
Let $N=2$, $q=2$, and let $u_0$ satisfy \eqref{u0}.
Set
\(
U_0=e^\Delta u_0.
\)
Let $\psi_A$ be the truncated nonlinearity defined in
\eqref{psi1}, with $A>0$, and let $B_k=B(0,k)$.
Fix $p\in(1,2)$ and suppose that
$V_k\in W^{1,p}_0(B_k)$ is a weak solution of
\begin{equation}
\label{endpoint:N2:q2:bounded}
\begin{aligned}
-\Delta V_k-\frac{x}{2}\cdot\nabla V_k
-\frac12 V_k
+\partial_{x_1}\psi_A(V_k+U_0)&=0
&&\text{in }B_k,\\
V_k&=0
&&\text{on }\partial B_k.
\end{aligned}
\end{equation}
Then
\begin{equation}
\label{endpoint:N2:q2:W1p}
\|V_k\|_{W^{1,p}(B_k)}
\leqslant C(p,U_0),
\end{equation}
where the constant is independent of both $k$ and $A$.

Consequently, if $V$ is the whole-space solution constructed
in Theorem~\ref{existence_I} as a limit of the solutions $V_k$,
then
\begin{equation}
\label{endpoint:N2:q2:Linfty}
\|V\|_{L^\infty(\bbfR^2)}
\leqslant C(U_0),
\end{equation}
where the constant is independent of $A$.
\end{lemma}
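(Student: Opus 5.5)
The plan is to redo the weighted $L^p$ energy estimate of Lemma~\ref{C_psi_1} in the endpoint case $N=2$, $q=2$. There $c=\tfrac12$ and the admissible range is $1<p<2=N(q-1)$. The critical observation is that the bad term $\int|\nabla U_0||V|^p$ appears with a coefficient that vanishes as $p\downarrow1$, whereas the coercive coefficient $\gamma_p=\frac1p-\frac12$ tends to $\frac12$.

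\emph{Step 1: sharpen the bound on $\partial_uP_\delta$.} As in the proof of Lemma~\ref{C_psi_1}, test \eqref{endpoint:N2:q2:bounded} with $\Phi_\delta(V_k)$ and rewrite the convection term via $P_\delta(v,u)$; this gives \eqref{Cpsi1:regularized-energy:P}. For $q=2$ we have $|\psi_A'(z)|\leqslant 2|z|$ uniformly in $A$, and $\Phi_\delta'(s)\leqslant(p-1)|s|^{p-2}$ up to a $\delta$-uniform factor $1+o(1)$. Keeping track of constants then yields
\[
|\partial_uP_\delta(v,u)|\leqslant \frac{2(p-1)}{p}|v|^{p}+\frac{2}{p-1}\,|u|\,|v|^{p-1}\bigl(1+o(1)\bigr).
\]
Hence the right-hand side of the energy identity is bounded by
\[
\frac{2(p-1)}{p}\|\nabla U_0\|_\infty\|V_k\|_p^p+C(p)\int|U_0||\nabla U_0||V_k|^{p-1}\,\ud x.
\]

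\emph{Step 2: choose a small exponent and absorb.} Fix $p_1\in(1,2)$ so close to $1$ that
\[
\frac{2(p_1-1)}{p_1}\|\nabla U_0\|_\infty<\frac12\gamma_{p_1}.
\]
This choice depends only on $U_0$, which is bounded with bounded gradient by Lemma~\ref{heat_L}. Treat the mixed term with Young's inequality as $\varepsilon\|V_k\|_{p_1}^{p_1}+C_\varepsilon\|U_0\nabla U_0\|_{p_1}^{p_1}$; this norm is finite since $|U_0\nabla U_0|\lesssim(1+|x|)^{-3}$ in $\bbfR^2$. After letting $\delta\downarrow0$ and absorbing, Lemma~\ref{lemma_1} gives $\|V_k\|_{W^{1,p_1}}\leqslant C(U_0)$, uniformly in $k$ and $A$. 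Passing to the limit as in Theorem~\ref{existence_I} gives the same bound for $V$. Sobolev then gives $V\in L^{p_1^*}$ with $p_1^*=2p_1/(2-p_1)>2=N(q-1)$, and $U_0\in L^{p_1^*}$ by Lemma~\ref{heat_L}. Lemma~\ref{lem:large:p:U} with $p_0=p_1^*$ then yields \eqref{endpoint:N2:q2:Linfty}, with a constant independent of $A$ because that lemma does not depend on $\psi$.

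\emph{Step 3: general $p\in(1,2)$.} This is the main obstacle, because for $p$ not close to $1$ the coefficient $\frac{2(p-1)}{p}\|\nabla U_0\|_\infty$ need not be small. The first thing I would try is to reuse the information from Step 2 instead of the crude bound $|v|^p$. For this I would first prove a $k$- and $A$-uniform $L^\infty$ bound for $V_k$ itself. One way is to run the Moser iteration of Lemma~\ref{lem:large:p:U} on $B_k$ with test functions $(|U_k|-\|U_0\|_\infty)_+^{p-1}\operatorname{sgn}U_k$, which vanish on $\partial B_k$, in the spirit of the one-dimensional Lemma~\ref{lem:uniform-Linfty-N1}.

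Granting $\|V_k\|_\infty\leqslant C(U_0)$, the bound $|\partial_uP|\leqslant C|v|^{p-1}$ on the relevant set reduces the right-hand side to
\[
\varepsilon\|V_k\|_p^p+C_\varepsilon\|\nabla U_0\|_p^p,
\]
and this closes for every $p\in(1,2)$, giving \eqref{endpoint:N2:q2:W1p}. If the bounded-domain iteration turns out to be awkward, a fallback is to split $|V_k|^p\leqslant|V_k|^{p_1}\|V_k\|_\infty^{p-p_1}$ on $B_R$ and use the decay of $\nabla U_0$ outside $B_R$.
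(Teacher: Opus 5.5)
Your Steps 1 and 2 are correct and match the first half of the paper's proof. The coefficient $\frac{2(s-1)}{s}D$ of the bad term vanishes as $s\downarrow1$, while $\gamma_s=\frac1s-\frac12\to\frac12$. So at one auxiliary exponent near $1$ the term can be absorbed, which gives a $W^{1,p_1}$ bound uniform in $k$ and $A$. Deriving \eqref{endpoint:N2:q2:Linfty} from Lemma~\ref{lem:large:p:U} with $p_0=p_1^*$, instead of the paper's $p=3/2$ and $p_0=6$, is fine. (For merely bounded $C$, Lemma~\ref{heat_L} only gives $|U_0\nabla U_0|\lesssim(1+|x|)^{-2}$, not $(1+|x|)^{-3}$, but this is still in $L^{p_1}(\bbfR^2)$.)

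\textbf{The gap is Step 3.} As written, it does not prove \eqref{endpoint:N2:q2:W1p} for $p\in(p_1,2)$, for two reasons.
\begin{itemize}
\item Both of your variants rest on a bound $\|V_k\|_{L^\infty(B_k)}\leqslant C(U_0)$, uniform in $k$ and $A$, which you only grant. The Moser sketch is not carried out. With $w=(|U_k|-B)_+$ and $B=\|U_0\|_\infty$, the zero-order term also leaves an extra $\frac B2\int w^{p-1}$ that you would have to control.
\item Your main variant ends with $C_\varepsilon\|\nabla U_0\|_{L^p}^p$ for $p<2$, which is not available under \eqref{u0}. Lemma~\ref{heat_L} gives $\nabla U_0\in L^p(\bbfR^2)$ only for $p>2$. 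The decay $|\nabla U_0|\lesssim|x|^{-2}$ needs $u_0\in C^1(\bbfR^2\setminus\{0\})$.
\end{itemize}

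\textbf{The missing observation.} Once Step 2 is done there is no obstacle, and no $L^\infty$ bound on $V_k$ is needed. By Step 2 and the Sobolev inequality, the zero extensions satisfy $\|V_k\|_{L^{p_1}}+\|V_k\|_{L^{p_1^*}}\leqslant C(U_0)$, with $p_1^*=2p_1/(2-p_1)>2$. Interpolation then gives $\|V_k\|_{L^p(B_k)}\leqslant C(p,U_0)$ for every $p\in[p_1,2)$.

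Now go back to your energy inequality at exponent $p$. Write $w_p=|V_k|^{p/2-1}V_k$, $D=\|\partial_{x_1}U_0\|_\infty$ and $F=|U_0\partial_{x_1}U_0|$. Then
\[
\frac{4(p-1)}{p^2}\|\nabla w_p\|_2^2+\gamma_p\|V_k\|_p^p
\leqslant
\frac{2(p-1)}{p}D\|V_k\|_p^p+2\|F\|_{L^p(\bbfR^2)}\|V_k\|_p^{p-1}.
\]
Here the bad term no longer has to be absorbed, because the whole right-hand side is already bounded. Hence $\|\nabla w_p\|_2\leqslant C$, and Lemma~\ref{lemma_1} gives \eqref{endpoint:N2:q2:W1p}. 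For $p\leqslant p_1$, Step 2 applies directly, since the absorption condition is monotone in $s$. This is exactly the paper's route: it takes the auxiliary exponent $r<\min\{p,1+1/(1+4D)\}$ and then interpolates.

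Your fallback would also close without any $L^\infty$ bound if, on $B_R$, you used
\[
\int_{B_R}|\nabla U_0||V_k|^p\,dx\leqslant D|B_R|^{1-p/p_1^*}\|V_k\|_{L^{p_1^*}}^p,
\]
together with the decay of $\nabla U_0$ outside $B_R$.
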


\begin{proof}
Write $V=V_k$ and set
\[
D=\|\partial_{x_1}U_0\|_{L^\infty(\bbfR^2)},
\qquad
F=|U_0\partial_{x_1}U_0|.
\]
By Lemma~\ref{heat_L},
\[
|U_0(x)|+|\nabla U_0(x)|
\leqslant \frac{C(U_0)}{1+|x|}.
\]
Hence $D<\infty$ and
\[
F(x)\leqslant \frac{C(U_0)}{(1+|x|)^2},
\qquad
F\in L^s(\bbfR^2)
\quad\text{for every }s>1.
\]

For each fixed $A$ and $k$, the boundedness of $\psi_A'$
and elliptic bootstrap give
$V\in W^{2,t}(B_k)$ for every finite $t>1$.
Thus the regularized power tests below are admissible.
The constants in this preliminary regularity argument
need not be uniform in $A$ or $k$.

For $1<s<2$, set
\[
w_s=|V|^{s/2-1}V,
\qquad
\alpha_s=\frac{4(s-1)}{s^2},
\qquad
\beta_s=\frac1s-\frac12.
\]
Unless otherwise indicated, norms of $V$ and $w_s$ are
taken over $B_k$.

Testing \eqref{endpoint:N2:q2:bounded} with
\[
\Phi_{s,\delta}(V)
=
V(V^2+\delta)^{(s-2)/2},
\]
integrating the convection term by parts, and letting
$\delta\downarrow0$, we obtain
\begin{align}
\alpha_s\|\nabla w_s\|_2^2
+\beta_s\|V\|_s^s
&\leqslant
\int_{B_k}|\partial_{x_1}U_0|
\left|
(s-1)\int_0^V
\psi_A'(z+U_0)|z|^{s-2}\,\ud z
\right|\ud x.
\label{endpoint:power:energy}
\end{align}
To justify the nonlinear integration by parts, use the
primitive
\[
P_{s,\delta}(v,u)
=
\int_0^v
\psi_A(z+u)\Phi_{s,\delta}'(z)\,\ud z.
\]
Since $P_{s,\delta}(0,u)=0$ and $V=0$ on $\partial B_k$,
\[
\int_{B_k}
\psi_A(V+U_0)\Phi_{s,\delta}'(V)
\partial_{x_1}V\,\ud x
=
-\int_{B_k}
\partial_uP_{s,\delta}(V,U_0)
\partial_{x_1}U_0\,\ud x.
\]
Dominated convergence applies to the latter integral,
while Fatou's lemma gives the diffusion term in
\eqref{endpoint:power:energy}.

The bound $|\psi_A'(z)|\leqslant 2|z|$ gives
\begin{align*}
\left|
(s-1)\int_0^v
\psi_A'(z+u)|z|^{s-2}\,\ud z
\right|
&\leqslant
2(s-1)\int_0^{|v|}
(z+|u|)z^{s-2}\,\ud z\\
&=
\frac{2(s-1)}s|v|^s
+2|u||v|^{s-1}.
\end{align*}
Consequently, by H\"older's inequality,
\begin{equation}
\label{endpoint:basic:estimate}
\alpha_s\|\nabla w_s\|_2^2
+\beta_s\|V\|_s^s
\leqslant
\frac{2(s-1)}s D\|V\|_s^s
+
2\|F\|_{L^s(\bbfR^2)}\|V\|_s^{s-1}.
\end{equation}
The factor $s-1$ in the first term on the right-hand side
allows us to absorb this term by choosing $s$ close to $1$.

Choose an auxiliary exponent
\[
1<r<
\min\left\{
p,\,
1+\frac{1}{1+4D}
\right\}.
\]
Then
\[
\gamma_r
:=
\beta_r-\frac{2(r-1)}rD
=
\frac{2-r-4D(r-1)}{2r}
>0.
\]
Applying \eqref{endpoint:basic:estimate} with $s=r$
and using Young's inequality, we obtain
\begin{align*}
\alpha_r\|\nabla w_r\|_2^2
+\gamma_r\|V\|_r^r
&\leqslant
2\|F\|_{L^r(\bbfR^2)}\|V\|_r^{r-1}\\
&\leqslant
\frac{\gamma_r}{2}\|V\|_r^r+C(p,U_0).
\end{align*}
Therefore,
\[
\|\nabla w_r\|_2^2+\|V\|_r^r
\leqslant C(p,U_0).
\]
Lemma~\ref{lemma_1} now yields
\begin{equation}
\label{endpoint:auxiliary:W1r}
\|V\|_{W^{1,r}(B_k)}
\leqslant C(p,U_0),
\end{equation}
uniformly in $A$ and $k$.

Extend $V$ by zero outside $B_k$.
Since $V\in W^{1,r}_0(B_k)$, the Sobolev inequality
on $\bbfR^2$ and \eqref{endpoint:auxiliary:W1r} imply
\[
\|V\|_{L^{r^*}(\bbfR^2)}
\leqslant C(p,U_0),
\qquad
r^*=\frac{2r}{2-r}>2.
\]
Because $r<p<2<r^*$, interpolation gives
\begin{equation}
\label{endpoint:uniform:Lp}
\|V\|_{L^p(B_k)}
\leqslant C(p,U_0).
\end{equation}

Returning to \eqref{endpoint:basic:estimate} with $s=p$
and using \eqref{endpoint:uniform:Lp}, we obtain
\[
\|\nabla w_p\|_2^2
\leqslant C(p,U_0).
\]
A second application of Lemma~\ref{lemma_1} gives
\[
\|V\|_{W^{1,p}(B_k)}^p
\leqslant
C(p)\left(
\|\nabla w_p\|_2^2+\|V\|_p^p
\right)
\leqslant C(p,U_0),
\]
which proves \eqref{endpoint:N2:q2:W1p}.

Finally, apply the estimate with $p=3/2$.
After extending $V_k$ by zero and passing to the
whole-space limit, weak lower semicontinuity gives
\[
\|V\|_{W^{1,3/2}(\bbfR^2)}
\leqslant C(U_0).
\]
The Sobolev embedding
$W^{1,3/2}(\bbfR^2)\hookrightarrow L^6(\bbfR^2)$
therefore yields
\[
\|V\|_{L^6(\bbfR^2)}
\leqslant C(U_0).
\]
Since $U_0\in L^6(\bbfR^2)\cap L^\infty(\bbfR^2)$,
the whole-space profile $U=V+U_0$ satisfies
\[
\|U\|_{L^6(\bbfR^2)}
\leqslant C(U_0).
\]
Applying Lemma~\ref{lem:large:p:U} to $U$ with $p_0=6$,
we obtain
\[
\|U\|_{L^\infty(\bbfR^2)}
\leqslant
C\|U\|_{L^6(\bbfR^2)}
\leqslant C(U_0),
\]
where the constant is independent of $\psi_A$ and hence
of $A$. Thus
\[
\|V\|_{L^\infty(\bbfR^2)}
\leqslant
\|U\|_{L^\infty(\bbfR^2)}
+\|U_0\|_{L^\infty(\bbfR^2)}
\leqslant C(U_0).
\]
\end{proof}

\subsection{Proof of Theorem \ref{main}}

For $A>0$, let $\psi_A$ be the truncated nonlinearity defined in
\eqref{psi1}. By Theorem \ref{existence_I}, there exists a weak
solution
\(
U_A=U_0+V_A
\)
of the equation
\begin{equation}\label{ell:truncated}
-\Delta U_A-\frac12x\cdot\nabla U_A
-\frac{1}{2(q-1)}U_A
+\partial_{x_1}\psi_A(U_A)=0
\qquad\text{in }\bbfR^N,
\end{equation}
with the Sobolev regularity stated in Theorem \ref{existence_I}. By
Theorem \ref{thm:uniform:estimates},
\[
\|U_A\|_\infty
\leqslant
\|V_A\|_\infty+\|U_0\|_\infty
\leqslant M(N,q,U_0),
\]
with $M$ independent of $A$. Choosing $A>M$, we have
\(
\psi_A(U_A)=|U_A|^{q-1}U_A.
\)
Hence $U:=U_A$ solves equation \eqref{ell_U_q}.

The Sobolev regularity gives
$V\in L^s(\bbfR^N)$ for some $1<s<N(q-1)$. Together with
$V\in L^\infty$, interpolation yields $V\in L^p$ for every
$p\geqslant s$. Lemma \ref{heat_L} gives
$U_0\in L^p$ whenever $p>N(q-1)$. Consequently,
\[
U\in L^p(\bbfR^N)
\qquad\text{for every }p \in (N(q-1),\infty].
\]

Define
\[
u(x,t)
=
t^{-\frac{1}{2(q-1)}}
U\left(\frac{x}{\sqrt t}\right).
\]
A change of variables in the weak formulation of \eqref{ell_U_q}
shows that $u$ is a distributional solution of
\[
u_t-\Delta u+\partial_{x_1}\bigl(|u|^{q-1}u\bigr)=0
\qquad\text{in }\bbfR^N\times(0,\infty).
\]

It remains to identify the initial trace. By the homogeneity of
$u_0$,
\[
e^{t\Delta}u_0(x)
=
t^{-\frac{1}{2(q-1)}}
U_0\left(\frac{x}{\sqrt t}\right),
\]
and therefore
\begin{equation}\label{u:decomposition}
u(x,t)
=
e^{t\Delta}u_0(x)
+
t^{-\frac{1}{2(q-1)}}
V\left(\frac{x}{\sqrt t}\right).
\end{equation}

Fix $1\leqslant r<N(q-1)$ and a bounded set
$K\subset\bbfR^N$. By the Sobolev regularity of $V$ and the
corresponding embeddings, there exists
\[
r\leqslant s<N(q-1)
\qquad\text{such that}\qquad
V\in L^s(\bbfR^N).
\]
Indeed, when $q\leqslant1+2/N$ this follows from the
$W^{1,p}$-regularity in Theorem \ref{existence_I}, whereas for
$q>1+2/N$ it follows from $V\in W^{1,2}(\bbfR^N)$, Sobolev
embedding and interpolation; for $N\geqslant3$ we use
$q<N/(N-2)$.

By scaling and H\"older's inequality,
\begin{align*}
\left\|
t^{-\frac{1}{2(q-1)}}
V\left(\frac{\cdot}{\sqrt t}\right)
\right\|_{L^r(K)}
&=
t^{\frac{N}{2r}-\frac{1}{2(q-1)}}
\|V\|_{L^r(K/\sqrt t)}
\\
&\leqslant
|K|^{\frac1r-\frac1s}
t^{\frac{N}{2s}-\frac{1}{2(q-1)}}
\|V\|_{L^s(\bbfR^N)}.
\end{align*}
Since $s<N(q-1)$, the exponent of $t$ is positive, and hence
\begin{equation}\label{V:initial}
t^{-\frac{1}{2(q-1)}}
V\left(\frac{\cdot}{\sqrt t}\right)
\longrightarrow0
\qquad\text{in }L^r(K)
\end{equation}
as $t\to0$.

Finally, $u_0\in L^r_{\rm loc}(\bbfR^N)$ for
$r<N(q-1)$, so
\[
e^{t\Delta}u_0\longrightarrow u_0
\qquad\text{in }L^r_{\rm loc}(\bbfR^N)
\]
as $t\to0$. Combining this with \eqref{u:decomposition} and
\eqref{V:initial}, we obtain
\[
u(\cdot,t)\longrightarrow u_0
\qquad\text{in }L^r_{\rm loc}(\bbfR^N)
\]
for every $1\leqslant r<N(q-1)$. This completes the proof.


\appendix

\section{Heat equation}
For convenience, this appendix collects the standard heat-equation
properties used in the paper.

Let $u(x,t)=e^{t\Delta}u_0(x)$ solve the linear Cauchy problem
\begin{align}\label{heat}
u_t&=\Delta u, && x\in\bbfR^N ,\quad t>0\\
u(x,0)&=u_0(x), &&x\in \bbfR^N.\nonumber 
\end{align}
It is given by convolution with the Gauss--Weierstrass kernel:
\begin{equation}\label{u:A}
u(x,t)=\int_{\bbfR^N}G(x-y,t)u_0(y)\,\ud y,
\qquad
G(x,t)=\frac{e^{-|x|^2/(4t)}}{(4\pi t)^{N/2}}.
 \end{equation}
Throughout this appendix, $u_0$ satisfies \eqref{u0}.

\begin{lemma}\label{lem:heat:1}
Assume $q>1+1/N$. The function \eqref{u:A} with initial datum
\eqref{u0} is smooth for $t>0$, bounded in $x$, and has the
self-similar form
\begin{equation}\label{self:u:A}
u(x,t)= t^{-\frac{1}{2(q-1)}}U_0\left(\frac{x}{\sqrt{t}}\right), 
\end{equation}
where 
\begin{equation}\label{U0_form}
U_0(x)\equiv(e^\Delta u_0)(x)=\int_{\bbfR^N} G(x-y,1)u_0(y)\ud y
\end{equation}
solves the elliptic equation
\begin{equation}\label{eq:U0:A}
-\Delta U_0-\tfrac{1}{2}x\cdot\nabla U_0
-\tfrac{1}{2(q-1)}U_0=0
\qquad\text{in }\bbfR^N.
\end{equation}
\end{lemma}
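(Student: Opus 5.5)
The plan is to prove the three assertions of Lemma~\ref{lem:heat:1} directly from the convolution formula \eqref{u:A}, using the homogeneity of $u_0$ in \eqref{u0}.

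\emph{Well-definedness and boundedness.} Set $a=1/(q-1)$. The assumption $q>1+1/N$ gives $0<a<N$. Split $u_0=u_0\mathbf 1_{\{|y|\le 1\}}+u_0\mathbf 1_{\{|y|>1\}}$. The first piece lies in $L^1(\bbfR^N)$, since $|u_0(y)|\le\|C\|_\infty|y|^{-a}$ and $a<N$. The second piece lies in $L^\infty(\bbfR^N)$. Hence for each $t>0$,
\[
|u(x,t)|\le \|G(\cdot,t)\|_\infty\|u_0\mathbf 1_{B_1}\|_1+\|G(\cdot,t)\|_1\|u_0\mathbf 1_{B_1^c}\|_\infty<\infty,
\]
uniformly in $x$. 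The same splitting applies to $\partial_t^k\partial_x^\beta G(\cdot,t)$, which lies in $L^1\cap L^\infty$ with bounds locally uniform in $t>0$. We may therefore differentiate under the integral, which shows that $u$ is $C^\infty$ on $\bbfR^N\times(0,\infty)$ and solves \eqref{heat} classically for $t>0$.

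\emph{Self-similar form.} We use $G(x,t)=t^{-N/2}G(x/\sqrt t,1)$ and substitute $y=\sqrt t\,z$. Homogeneity gives $u_0(\sqrt t z)=t^{-a/2}u_0(z)$ for a.e.\ $z$, because $C(\lambda x)=C(x)$. This yields
\[
u(x,t)=\int G\bigl(x/\sqrt t - z,1\bigr)\,t^{-a/2}u_0(z)\ud z = t^{-\frac1{2(q-1)}}U_0(x/\sqrt t),
\]
which is \eqref{self:u:A}. The only point to watch is that the homogeneity holds a.e.\ for each $\lambda$. This is enough, since the change of variables is performed under the integral for a fixed $t$.

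\emph{Profile equation.} Insert \eqref{self:u:A} into $u_t=\Delta u$ and write $\xi=x/\sqrt t$. Then
\[
u_t=t^{-\frac a2-1}\Bigl(-\tfrac a2U_0(\xi)-\tfrac12\xi\cdot\nabla U_0(\xi)\Bigr),
\qquad
\Delta u=t^{-\frac a2-1}\Delta U_0(\xi).
\]
Setting $t=1$ gives \eqref{eq:U0:A}. Smoothness of $U_0=u(\cdot,1)$ comes from the first step, so the equation holds classically.

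No step is a real obstacle. The only care needed is the integrability splitting, which requires $a<N$, and this is exactly where the hypothesis $q>1+1/N$ is used.
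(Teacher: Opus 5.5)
Your proposal is correct and follows the same route as the paper. The paper also substitutes $y=\sqrt t\,z$ in the Gauss--Weierstrass formula, uses the homogeneity of $u_0$ to get the self-similar form, and then inserts that form into $u_t=\Delta u$ to obtain the profile equation. The only addition is that you explicitly justify well-definedness, boundedness and smoothness via the splitting $u_0=u_0\mathbf{1}_{\{|y|\le 1\}}+u_0\mathbf{1}_{\{|y|>1\}}\in L^1+L^\infty$ (using $1/(q-1)<N$), which the paper leaves implicit.
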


\begin{proof}
The change of variables $z=y/\sqrt t$ in \eqref{u:A} gives
\eqref{self:u:A}. Substitution of this formula into the heat equation gives
\eqref{eq:U0:A}.
\end{proof}

The following estimates are standard; see, for example,
\cite[Sec.~4]{B09}.

\begin{lemma}\label{heat_L}
Let $U_0=e^\Delta u_0$ be defined by equation \eqref{U0_form} with initial datum 
$u_0$ satisfying \eqref{u0}, and assume $q>1+1/N$. There is a
constant $C_0=C_0(q,N,\|C\|_\infty)>0$ such that
\begin{equation*}
\left|e^{\Delta }u_0(x) \right| +\left|\nabla e^{\Delta }u_0(x)\right| \leqslant \frac{C_0}{1+|x|^{\frac{1}{q-1}}}\quad \text{for all}\quad x\in \bbfR^N.
\end{equation*}
In particular,
$ e^{\Delta }u_0 \in L^p(\bbfR^N)$
and
$\nabla e^{\Delta }u_0 \in L^p(\bbfR^N)$
for each  $p\in (N(q-1), \infty ]$. 
If, moreover, $u_0$ is a $C^1$-function outside of the origin then
\begin{equation*}
\left|\nabla e^{\Delta }u_0(x)\right| \leqslant \frac{C_1}{1+|x|^{\frac{1}{q-1}+1}}\quad \text{for all}\quad x\in \bbfR^N.
\end{equation*}
\end{lemma}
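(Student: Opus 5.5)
The plan is to use the homogeneity of $u_0$ to reduce everything to the behaviour of the angular profile near and far from the origin. Write $a=1/(q-1)\in(0,N)$; then $u_0(y)=C(y)|y|^{-a}$ with $C\in L^\infty$, so $|u_0(y)|\leqslant\|C\|_\infty|y|^{-a}$. Since $G$, $\nabla G$ are positive or bounded by positive Gaussian-type kernels, I bound
\[
|U_0(x)|+|\nabla U_0(x)|\leqslant \|C\|_\infty\int_{\bbfR^N}\bigl(G(x-y,1)+|\nabla_x G(x-y,1)|\bigr)|y|^{-a}\,\ud y,
\]
and $|\nabla_x G(z,1)|\leqslant C\,e^{-|z|^2/8}$. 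So it suffices to show that $I(x)=\int e^{-|x-y|^2/8}|y|^{-a}\,\ud y$ is bounded by $C(1+|x|^{a})^{-1}$.

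First, boundedness. I split the integral into $|y|\leqslant1$, where the kernel is bounded and $|y|^{-a}$ is integrable because $a<N$ (this is exactly $q>1+1/N$), and $|y|>1$, where $|y|^{-a}\leqslant1$ and the Gaussian is integrable. This gives $I(x)\leqslant C(N,a)$ for all $x$.

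Second, decay for $|x|\geqslant2$. I split into $|y-x|\leqslant|x|/2$, where $|y|\geqslant|x|/2$ gives a contribution $\leqslant 2^a|x|^{-a}\int e^{-|z|^2/8}\ud z$, and $|y-x|>|x|/2$, where the Gaussian factor is $\leqslant e^{-|x|^2/64}$. In the second region I split once more into $|y|\leqslant1$ (integrable singularity, at most $Ce^{-|x|^2/64}$) and $|y|>1$, keeping half of the Gaussian for integrability: $e^{-|x-y|^2/8}\leqslant e^{-|x|^2/64}e^{-|x-y|^2/16}$. This yields $Ce^{-|x|^2/64}\leqslant C|x|^{-a}$. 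Combining the two steps gives the first displayed estimate with $C_0$ depending only on $N,q,\|C\|_\infty$.

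The $L^p$ claim then follows by integration: $(1+|x|^a)^{-p}$ is integrable exactly when $ap>N$, that is, when $p>N(q-1)$, and the case $p=\infty$ is already covered.

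For the improved gradient bound when $u_0\in C^1(\bbfR^N\setminus\{0\})$, I use that $u_0$ is homogeneous of degree $-a$, so $|\nabla u_0(y)|\leqslant C|y|^{-a-1}$. Near the origin this fails to be integrable if $a+1\geqslant N$, so I cannot simply move the derivative onto $u_0$. Instead I take a smooth cutoff $\chi$ equal to $1$ on $B_1$ and write $\nabla U_0=\nabla G*(\chi u_0)+G*\nabla((1-\chi)u_0)$. The first term is bounded by $\sup_{|y|\leqslant2}|\nabla G(x-y,1)|\,\|\chi u_0\|_1\lesssim e^{-|x|^2/16}$ for large $|x|$. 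The second term has a source bounded by $C(1+|y|)^{-a-1}$, so the same splitting argument as in the second step, now with exponent $a+1$, gives decay $|x|^{-a-1}$. For bounded $|x|$, the first estimate already provides a bound.

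The main delicate point is this last step: handling the nonintegrable singularity of $\nabla u_0$ at the origin, which is resolved by the cutoff splitting.
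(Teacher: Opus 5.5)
Your proof is correct and is essentially the direct estimate of the explicit convolution formula \eqref{U0_form} that the paper has in mind. The paper gives no details and refers to \cite[Lemma~1]{B09}. Your near/far splitting, together with the cutoff that puts the derivative on $(1-\chi)u_0$ only away from the origin (avoiding the non-integrable singularity of $\nabla u_0$ when $\frac{1}{q-1}+1\geqslant N$), supplies a complete self-contained argument. The only slip is harmless: $\sup_{|y|\leqslant 2}|\nabla G(x-y,1)|$ is bounded by $C(1+|x|)e^{-|x|^2/16}$, not $Ce^{-|x|^2/16}$, which still decays faster than any power.
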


\begin{proof}
This is a direct consequence of the explicit formula \eqref{U0_form}, see, \textit{e.g.}, {\cite[Lemma 1]{B09}} for a detailed proof.
\end{proof}

For more regular initial data, $e^\Delta u_0(x)$ behaves like $u_0(x)$ for large $|x|$, as made precise in the following lemma.

\begin{lemma}[{\cite[Lemma 2]{B09}}]\label{L:A3}
Assume, in addition, that $u_0\in C^1(\bbfR^N\setminus\{0\})$ and
\[
\sup_{x\neq0}|x|^{\frac{1}{q-1}+1}|\nabla u_0(x)|<\infty.
\]
Then
\begin{equation*}
  e^\Delta u_0(x)=u_0(x)+O\left(|x|^{-\frac{1}{q-1}-1}\right) \quad \text{as} \quad |x|\to +\infty.
\end{equation*}
\end{lemma}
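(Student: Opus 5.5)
The plan is to derive the two bounds in Lemma~\ref{L:A3} directly from the explicit formula \eqref{U0_form}. Using $\int G(z,1)\ud z=1$, write
\[
e^\Delta u_0(x)-u_0(x)=\int_{\bbfR^N}G(z,1)\bigl(u_0(x-z)-u_0(x)\bigr)\ud z .
\]
Set $a=1/(q-1)$ and fix $|x|\geqslant 2$. I would split the $z$-integral into the near region $\{|z|<|x|/2\}$ and the far region $\{|z|\geqslant |x|/2\}$.

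\textbf{Far region.} Here the Gaussian weight is at most $e^{-|x|^2/32}$, up to a constant, times an integrable tail. It remains to control $\int_{|z|\geqslant|x|/2}G(z,1)\bigl(|u_0(x-z)|+|u_0(x)|\bigr)\ud z$.
\begin{itemize}
\item The term with $|u_0(x)|$ is trivially $O(e^{-c|x|^2})$.
\item For the term with $u_0(x-z)$, use $|u_0(y)|\leqslant\|C\|_\infty|y|^{-a}$ together with the local integrability of $|y|^{-a}$, valid since $a<N$ because $q>1+1/N$.
\end{itemize}
Bounding $G(z,1)\leqslant Ce^{-|x|^2/32}e^{-|z|^2/8}$ on this region, the integral $\int e^{-|z|^2/8}|x-z|^{-a}\ud z$ is bounded uniformly in $x$. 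The far contribution is therefore $O(e^{-c|x|^2})$, which is much smaller than $|x|^{-a-1}$.

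\textbf{Near region.} Here $|x-z|\geqslant|x|/2$, so the whole segment from $x$ to $x-z$ stays at distance at least $|x|/2$ from the origin. On it $u_0$ is $C^1$, and the mean value theorem gives
\[
|u_0(x-z)-u_0(x)|\leqslant |z|\sup_{|y|\geqslant |x|/2}|\nabla u_0(y)|\leqslant C|z|\,|x|^{-a-1}.
\]
The last step is the assumed gradient bound $\sup_{y\ne0}|y|^{a+1}|\nabla u_0(y)|<\infty$. Integrating against $G(z,1)$, whose first moment $\int|z|G(z,1)\ud z$ is finite, yields $O(|x|^{-a-1})$.

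Adding the two regions gives $e^\Delta u_0(x)=u_0(x)+O(|x|^{-a-1})$ as $|x|\to\infty$. The only delicate point is the singularity of $u_0$ at the origin, and it affects only the far region. There it is handled by local integrability ($a<N$) combined with Gaussian decay, so I expect no real obstacle. One could also refine the near-region estimate with a second-order Taylor expansion, using the symmetry of $G$, but that is not needed for the stated rate.
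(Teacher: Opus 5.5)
Your proof is correct: splitting at $|z|=|x|/2$, the mean value theorem on the near region gives the bound $C|z|\,|x|^{-a-1}$, since the segment from $x$ to $x-z$ stays outside $B_{|x|/2}$. On the far region, Gaussian decay combined with the local integrability of $|y|^{-a}$ (valid because $a<N$) gives an $O(e^{-c|x|^2})$ contribution, and together these yield $e^\Delta u_0(x)-u_0(x)=O(|x|^{-a-1})$. The paper gives no proof of its own and only cites Brandolese's Lemma 2, and your argument is the standard direct proof of such estimates. Incidentally, the gradient hypothesis is automatic here, because $\nabla u_0$ is homogeneous of degree $-a-1$ and continuous on the unit sphere.
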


\begin{remark}
Under the assumptions of Lemma \ref{L:A3}, we have $e^\Delta u_0 \notin L^p(\bbfR^N)$ for $p\in[1,N(q-1)]$, provided that $u_0$ has the corresponding nonintegrability at infinity.
\end{remark}


\end{document}